\documentclass[10pt,reqno]{amsart}

\usepackage{enumerate}
\usepackage{amsmath, amssymb, amsthm}
\usepackage{mathrsfs}
\usepackage{esint}
\usepackage{xcolor}
\usepackage{mathtools}
\usepackage{bm}
\usepackage{accents}
\usepackage{hyperref}
\usepackage[foot]{amsaddr}
\usepackage{todonotes}

\makeatletter

\numberwithin{equation}{section}

\newtheorem{thm}{Theorem}[section]
\newtheorem{theorem}[thm]{Theorem}
\newtheorem{lemma}[thm]{Lemma}
\newtheorem{corollary}[thm]{Corollary}
\newtheorem{lem}[thm]{Lemma}
\newtheorem{prop}[thm]{Proposition}
\theoremstyle{definition}

\newtheorem{assumption}[thm]{Assumption}
\theoremstyle{remark}

\newtheorem{remark}[thm]{\bf{Remark}}

\newcommand\aint{-\hspace{-0.38cm}\int}

% mathbb

\newcommand\bE{\mathbb{E}}

\newcommand\bH{\mathbb{H}}

\newcommand\bM{\mathbb{M}}
\newcommand\bN{\mathbb{N}}

\newcommand\bR{\mathbb{R}}

\newcommand\bT{\mathbb{T}}

\newcommand\bZ{\mathbb{Z}}

%mathbf

%mathcal
\newcommand\cA{\mathcal{A}}
\newcommand\cB{\mathcal{B}}
\newcommand\cC{\mathcal{C}}

\newcommand\cF{\mathcal{F}}

\newcommand\cL{\mathcal{L}}
\newcommand\cM{\mathcal{M}}

\newcommand\cS{\mathcal{S}}
\newcommand\cT{\mathcal{T}}

\begin{document}

\title{$L_p$-estimates for nonlocal equations with general L\'evy measures}

\author{Hongjie Dong$^{1}$}
\address{$^1$ Division of Applied Mathematics, Brown University, 182 George Street, Providence, RI 02912, USA}
\email{Hongjie\_Dong@brown.edu}
\thanks{H. Dong was partially supported by the NSF under agreement DMS-2350129.}

\author{Junhee Ryu$^{2}$}
\address{$^2$ School of Mathematics, Korea Institute for Advanced Study, 85 Hoegi-ro, Dongdaemun-gu, Seoul, 02455, Republic of Korea}
\email{junhryu@kias.re.kr}
\thanks{J. Ryu was supported by a KIAS Individual Grant (MG101501) at Korea Institute for Advanced Study.}

\subjclass{35B65, 35R11, 45K05, 47G20}

\keywords{Nonlocal equation, L\'evy measure, boundedness of operators, existence and uniqueness}

\begin{abstract}
We consider nonlocal operators of the form
\begin{equation*}
    L_t u(x) = \int_{\bR^d} \left( u(x+y)-u(x)-\nabla u(x)\cdot y^{(\sigma)} \right) \nu_t(dy),
\end{equation*}
where $\nu_t$ is a general L\'evy measure of order $\sigma \in(0,2)$. We allow this class of L\'evy measures to be very singular and impose no regularity assumptions in the time variable. Continuity of the operators and the unique strong solvability of the corresponding nonlocal parabolic equations in $L_p$ spaces are established. We also demonstrate that, depending on the ranges of $\sigma$ and $d$, the operator can or cannot be treated in weighted mixed-norm spaces.
\end{abstract}

\maketitle

\section{Introduction}

In this paper, we study the parabolic equation
\begin{equation} \label{maineq_intro}
\begin{cases}
\partial_t u(t,x)=L_t u(t,x)+f(t,x),\quad &(t,x)\in \bR^d_T,
\\
u(0,x)=0,\quad & x\in \bR^d,
\end{cases}
\end{equation}
where $\bR^d_T:=(0,T)\times \bR^d$ and $L_t $ is a time-inhomogeneous $\sigma$-stable-like nonlocal operator with $\sigma\in(0,2)$. More specifically, $L_t$ is defined by
\begin{align} \label{op_para}
L_t u(t,x) = \int_{\bR^d} \left( u(t,x+y)-u(t,x)-\nabla u(t,x)\cdot y^{(\sigma)} \right) \nu_t(dy),
\end{align}
where $\sigma\in(0,2)$, $\nu_t$ is a $\sigma$-stable-like L\'evy measure (see Assumption \ref{levy}), and
\begin{equation*}
  y^{(\sigma)}:=\left(1_{\sigma\in(1,2)} + 1_{\sigma=1}1_{|y|\leq1} \right) y.
\end{equation*}

Nonlocal operators are closely related to probability theory. It is well known that the fractional Laplacian $-(-\Delta)^{\sigma/2}$ is the infinitesimal generator of a standard isotropic $\sigma$-stable L\'evy process $X_t$, which can be expressed by
\begin{equation*}
    -(-\Delta)^{\sigma/2}u(x) = \lim_{t\to0^+} \frac{\bE u(x+X_t) - u(x)}{t}.
\end{equation*}
In general, the infinitesimal generators of purely jump L\'evy processes are known to be of the form \eqref{op_para}. Conversely, by the L\'evy–Khintchine formula, given an arbitrary L\'evy measure, one can construct a L\'evy process whose infinitesimal generator coincides with such an operator (see e.g. \cite{Sato13}). Motivated by this correspondence, we introduce a large class of operators of order $\sigma\in(0,2)$, which includes singular operators, and study Sobolev type estimates for the associated equations.

Here are some explicit examples of nonlocal operators satisfying Assumption \ref{levy} below.
\begin{enumerate}
    \item $\nu_t(dy)=|y|^{-d-\sigma}dy$,

    \item $\nu_t(dy)=\sum_{i=1}^d |y_i|^{-1-\sigma}dy_i\otimes\delta_0^{d-1}(dy^i)$, where $\delta_0^{d-1}$ is the $(d-1)$-dimensional Dirac measure and $y^i$ denotes the vector obtained by removing the $i$-th component $y_i$ of $y$,

    \item The last example is even more singular than $(2)$;
    \begin{align*}
        \nu_t(dy) = \sum_{i=1}^d \sum_{k\in\bZ} 2^{-k\sigma} &\Big(\left(\delta_{2^k}(dy_i) + \delta_{2^k}(-dy_i) \right)\otimes\delta_0^{d-1}(dy^i) \Big),
    \end{align*}
    where $\delta_{2^k}$ is the one-dimensional Dirac measure concentrated at $x=2^k$.
\end{enumerate}
The first example generates the fractional Laplacian up to a constant. The second example is a well-known singular nonlocal operator: the generator of $d$ independent one-dimensional symmetric stable L\'evy processes. In this case, the Fourier transform of the corresponding operator is given by
    \begin{equation*}
        \cF[L_tu](\xi) = -c\sum_{i=1}^d |\xi_i|^\sigma \cF[u](\xi).
    \end{equation*}
    Here, the symbol $\sum_{i=1}^d |\xi_i|^\sigma$ is singular in the sense that one cannot apply the Fourier multiplier theorem. For the last example, compared to $(2)$, even after fixing $d-1$ coordinates, the resulting one-dimensional symbol fails to satisfy the Fourier multiplier theorem. To the best of our knowledge, our main results provide the first regularity result for equations with very singular operators like $(3)$. Moreover, in the present paper, no regularity assumption is assumed in the time direction.

There has been extensive work on Sobolev estimates for parabolic equations with nonlocal operators. 
We first refer the reader to \cite{M92,MP14,DL23}, where the operators associated with measures of the form $\nu_t(dy)=K(t,y)|y|^{-d-\sigma} dy$ were introduced. In \cite{M92}, an $L_p$ estimate for \eqref{maineq_intro} was obtained using the Fourier multiplier theorem. In this work, the kernel $K(t,y)$ satisfies an ellipticity condition, and is homogeneous of order zero and smooth in $y$.
In \cite{MP14}, the same authors extended the earlier result to the case $K(t,y)\geq K_0(t,y)$, where $K_0$ satisfies the conditions in \cite{M92}. Their approach was to use a probabilistic method to represent the solution for \eqref{maineq_intro}.
 In \cite{DL23}, the authors derived a mean oscillation estimate of solution to obtain weighted mixed-norm estimates for the equations, where a purely analytic approach was introduced.
 We also refer the reader to \cite{AM25}, where the authors extended the results in \cite{DL23} to nonlocal elliptic systems. For results on the corresponding nonlocal elliptic equations, see \cite{DK12,KK15}. Further results on spatially dependent measures can be found in \cite{DJK23,DL23, X13_var}.

There has also been considerable interest in L\'evy measures beyond the prototype of $|y|^{-d-\sigma}dy$.
 In \cite{X13}, the authors obtained an $L_p$ estimate when $\nu_t=\nu$ is a time-independent L\'evy measure such that $\nu \geq \nu^{(\sigma)}$ for some nondegenerate $\sigma$-stable L\'evy measure. Here, $\nu^{(\sigma)}$ can be represented in the polar coordinates as
 \begin{equation*}
     d\nu^{(\sigma)}(dy) = r^{-1-\sigma} dr \mu(d\theta),
 \end{equation*}
 where $\mu$ is a nonnegative finite measure on the unit sphere $S^{d-1}$ in $\bR^d$. In \cite{CKP24}, the authors treated anisotropic fractional Laplacians including Example $(2)$. In both \cite{CKP24,X13}, their main approach is to use a probabilistic representation of the solution.
 Further developments in Sobolev regularity theory include the study of variable-order operators beyond the constant order $\sigma$. See \cite{CK23,JSS25,KKK13,KKK19,L23,MP19}. We also refer the reader to \cite{Bass09,CLU14,CS09,DK20,Kass09,KW24,SS16} for H\"older regularity results.

Now we introduce the main contributions of this paper. First, we establish solvability of \eqref{maineq_intro} in unweighted $L_p$ spaces for a substantially larger class of operators than those previously studied. Next, in the special case when $d=1$ and $\sigma\in(1,2)$, we derive the weighted-mixed norm estimates for the equations. In contrast, when $d\geq2$ or $d=1$ with $\sigma\in(0,1)$, we also show that general nonlocal operators cannot be treated in weighted spaces. The borderline case when $d=\sigma=1$ remains open.
Lastly, we prove the continuity of $L_t$, which plays a crucial role in the proof of existence via the method of continuity. In the local case, the operator $a_{ij}D_{ij}$ is bounded from $W_p^2(\bR^d)$ to $L_p(\bR^d)$ for arbitrary bounded measurable coefficients $a_{ij}$. However, the corresponding boundedness of nonlocal operators from the Bessel potential space $H_p^\sigma(\bR^d)$ to $L_p(\bR^d)$ is not straightforward, especially for singular measures.

  For the proof of the a priori estimates, we do not rely on probabilistic representations of solutions, unlike the aforementioned results for singular measures \cite{CKP24,X13}. As in \cite{DL23}, for the unweighted estimates, we employ a level set argument together with Lemma \ref{lem10161553}, commonly referred to as ``crawling of ink spots lemma''. For the weighted mixed-norm estimates in the special case $d=1$ and $\sigma\in(1,2)$, an iteration argument is used to obtain a mean oscillation estimate of $(-\Delta)^{\sigma/2}u$, where $u$ is a solution to \eqref{maineq_intro}.
In these procedures, as in most nonlocal equations, a careful analysis of tail terms is required. However, in our case,  the singularity of L\'evy measures prevents a direct application of the approach in \cite{DL23}. To address this difficulty, we use the result from \cite{DR86}, originally formulated to deal with a wide class of maximal and singular integral operators arising in harmonic analysis. In order to make this result applicable to our framework, we introduce a new notion of maximal function associated with general measures and verify that it satisfies the assumptions in \cite{DR86}.

  The paper is organized as follows. In Section \ref{Sec2}, we introduce the function spaces, precise assumptions of the operators, and the main results. In Section \ref{sec3}, we provide the proof of the unweighted result (Theorem \ref{mainthm}). The weighted mixed-norm result (Theorem \ref{thm_weight}) is handled in Section \ref{sec4}. In Appendix \ref{secA}, we prove miscellaneous lemmas used in the main proofs, including careful analysis for general measures.

We finish the introduction with notations used in this paper.
 We use $``:="$ or $``=:"$ to denote a definition. For any $a\in\bR$, $a_+:=\max\{a,0\}$. By $\bN$ and $\bZ$, we denote the natural number system and the integer number system, respectively. We denote $\bN_0:=\bN\cup\{0\}$. As usual, $\bR^d$ stands for the Euclidean space of points $x=(x_1,\dots,x_d)$, and we denote
$$
B_r(x):=\{y\in\bR^d : |x-y|<r\}.
$$
We write $\bR_T^{d}:=(0,T)\times \bR^d$ and $\bR_T:=\bR^1_T$.
We use $D^n_x u$ to denote the partial derivatives of order $n\in\bN_0$ with respect to the space variables.
By $\mathcal{F}$ and $\mathcal{F}^{-1}$, we denote the $d$-dimensional Fourier transform and the inverse Fourier transform respectively, i.e.,
\begin{equation*}
    \mathcal{F}[f](\xi) := \widehat{f}(\xi) := \int_{\bR^d} e^{-i \xi \cdot x} f(x) dx, \qquad \mathcal{F}^{-1}[f](x) := \frac{1}{(2\pi)^d} \int_{\bR^d} e^{i \xi \cdot x} f(\xi) d\xi.
\end{equation*}
For a domain $D\subset \bR^{d+1}$, we denote by $L_{p}(D)$ the set of all measurable functions $u$ defined on $D$ such that
\begin{equation*}
  \|u\|_{L_{p}(D)}:=\left( \int_{D} |u(t,x)|^p dxdt \right)^{1/p}<\infty, \quad p\in(1,\infty),
\end{equation*}
and
\begin{equation*}
  \|u\|_{L_{\infty}(D)}:=\sup_{(t,x)\in D}|u(t,x)|.
\end{equation*}
As usual, for $\tau>0$,
\begin{equation*}
    [u]_{C^{\tau/\sigma,\tau}(D)} := \sup_{\substack{(t,x),(s,y)\in D \\ (t,x)\neq (s,y)}}
\frac{|u(t,x) - u(s,y)|}{|t - s|^{\tau/\sigma} + |x - y|^{\tau}}
\end{equation*}
and
\begin{equation*}
    \|u\|_{C^{\tau/\sigma,\tau}(D)} := \|u\|_{L_{\infty}(D)} + [u]_{C^{\tau/\sigma,\tau}(D)}.
\end{equation*}

\section{Main results} \label{Sec2}

We first introduce the assumptions for the operators.
Let $\nu_t$ be a family of L\'evy measures on $\bR^d$, that is, for each $t\in \bR$, $\nu_t$ is a $\sigma$-finite (positive) measure on $\bR^d$ such that $\nu_t(\{0\})=0$ and
$$
\int_{\bR^d} \min\{1,|y|^2\} \, \nu_t(dy)<\infty.
$$

For the solvability of the equation, we impose the following nondegenerate $\sigma$-stable-like assumption on $L_t$.

\begin{assumption} \label{levy}
$(i)$ For any measurable set $A\subset\bR^d$, the mapping
\begin{equation*}
    t\to \nu_t(A)
\end{equation*}
is measurable.

$(ii)$ There exists $\Lambda \geq 1$ such that for any $r>0$,
\begin{equation} \label{equiv}
    \nu_t(B_r^c) \leq \Lambda r^{-\sigma}
\end{equation}

  $(iii)$ If $\sigma=1$, then
  \begin{align*}
    \int_{r_1\leq|y|\leq r_2} y \, \nu_t(dy)=0, \quad 0<r_1<r_2.
  \end{align*}

  $(iv)$ There exists $N_0>0$ such that for any $\xi\in\bR^d$,
  \begin{equation} \label{nonde}
    \int_{|\xi\cdot y|\leq 1} |\xi\cdot y|^2 \nu_t(dy) \geq N_0 |\xi|^\sigma.
  \end{equation}
\end{assumption}

For continuity of the operator, we only need the following assumption, which corresponds to Assumption \ref{levy} without \eqref{nonde}.

\begin{assumption} \label{upper}
$(i)$ For any measurable set $A\subset\bR^d$, the mapping
\begin{equation*}
    t\to \nu_t(A)
\end{equation*}
is measurable.

$(ii)$ There exists $\Lambda \geq 1$ such that for any $r>0$,
\begin{equation} \label{upper_mu}
    \nu_t(B_r^c) \leq \Lambda r^{-\sigma}
\end{equation}

  $(iii)$ If $\sigma=1$, then
  \begin{align} \label{cancel_mu}
    \int_{r_1\leq|y|\leq r_2} y \, \nu_t(dy)=0, \quad 0<r_1<r_2.
  \end{align}
\end{assumption}

Now we introduce function spaces. Recall that for $p\in(1,\infty)$ and $\sigma\in(0,\infty)$, the Bessel potential space is given by
\begin{equation*}
    H_p^\sigma(\bR^d):= \{u\in L_p(\bR^d): (1-\Delta)^{\sigma/2}u\in L_p(\bR^d)\}
\end{equation*}
and
\begin{equation*}
    \|u\|_{H_p^\sigma(\bR^d)}:= \|(1-\Delta)^{\sigma/2}u\|_{L_p(\bR^d)}.
\end{equation*}
Here,
\begin{equation*}
    (1-\Delta)^{\sigma/2}u= \cF^{-1}\left[(1+|\xi|^2)^{\sigma/2}\cF(u)(\xi)\right](x).
\end{equation*}
Next, we denote by $\bH_{p}^\sigma(S,T)$ the collection of functions such that
\begin{equation*}
    \|u\|_{\bH_p^\sigma(S,T)}:= \|u\|_{L_p((S,T);H_p^\sigma(\bR^d))} + \|\partial_t u\|_{L_p((S,T)\times\bR^d)} <\infty.
\end{equation*}
We write $u\in \bH_{p,0}^\sigma(S,T)$ if there exists a sequence of functions $u_n\in C_c^\infty([S,T]\times \bR^d)$ with $u_n(S,x)=0$ such that
\begin{equation*}
    \|u-u_n\|_{\bH_{p}^\sigma(S,T)} \to0 \text{ as } n\to\infty.
\end{equation*}
In the case when $S=0$, we omit $S=0$, i.e., $\bH_{p,0}^\sigma(T):= \bH_{p,0}^\sigma(0,T)$.

We also introduce spaces with Muckenhoupt weights. We say that a locally integrable nonnegative function $\omega$ on $\bR^d$ is in the $A_p(\bR^d)$ Muckenhoupt class of weights if
\begin{equation*}
    [\omega]_{A_p(\bR^d)}:= \sup_{r>0,x_0\in\bR^d} \left( \aint_{B_r(x_0)} \omega(x) dx \right) \left( \aint_{B_r(x_0)} (\omega(x))^{1/(1-p)} dx \right)^{p-1} <\infty.
\end{equation*}
Furthermore, for a constant $K_0>0$, we write $[\omega]_{q,p} \leq K_0$ if $\omega(t,x)=\omega_1(t)\omega_2(x)$ for some $\omega_1\in A_q(\bR)$ and $\omega_2\in A_p(\bR^d)$ satisfying
\begin{equation*}
    [\omega_1]_{A_q(\bR)}, [\omega_2]_{A_p(\bR^d)} \leq K_0.
\end{equation*}
For $\omega_1\in A_q(\bR)$ and $\omega_2\in A_p(\bR^d)$, we define $L_{p,\omega_2}(\bR^d):=L_p(\bR^d,\omega_2 dx)$ and $L_{q,p,\omega}(\bR^d_T):= L_q((0,T),\omega_1dt;L_{p,\omega_2}(\bR^d))$. We also denote $H_{p,\omega_2}^\sigma(\bR^d)$ by the class of tempered distributions satisfying
\begin{equation*}
    H_{p,\omega_2}^\sigma(\bR^d):= \{u\in L_{p,\omega_2}(\bR^d): (1-\Delta)^{\sigma/2}u\in L_{p,\omega_2}(\bR^d)\}.
\end{equation*}
Similar to $\bH_{p}^\sigma(T)$, we denote $\bH_{q,p,\omega}^\sigma(T)$ by the collection of functions such that
\begin{equation*}
    \|u\|_{\bH_{q,p,\omega}^\sigma(T)}:= \|u\|_{L_q((0,T),\omega_1 dt;H_{p,\omega_2}^\sigma(\bR^d))} + \|\partial_t u\|_{L_{q,p,\omega}(\bR^d_T)} <\infty.
\end{equation*}
We write $u\in \bH_{q,p,\omega,0}^\sigma(T)$ if there exists a sequence of functions $u_n\in C_c^\infty([0,T]\times \bR^d)$ with $u_n(0,x)=0$ such that
\begin{equation*}
    \|u-u_n\|_{\bH_{q,p,\omega}^\sigma(T)} \to 0 \text{ as } n\to\infty.
\end{equation*}

We now state our first main theorem of this paper, where the boundedness of operators and the solvability of equations in unweighted spaces are presented.

\begin{theorem} \label{mainthm}
  Let $d\geq1$, $\sigma\in(0,2)$, $T\in(0,\infty)$, $\lambda\geq0$, and $p\in(1,\infty)$. Suppose that $L_t$ and $\cL_t$ satisfy Assumptions \ref{levy} and \ref{upper}, respectively.

  $(i)$ For each $t>0$, the operators $L_t$ and $\cL_t$ are continuous from $H_{p}^{\sigma}(\bR^d)$ to $L_p(\bR^d)$. Moreover, for each $t>0$,
      \begin{equation} \label{oper_bdd}
        \|\cL_t v\|_{L_{p}(\bR^d)} \leq N(d,p,\sigma,\Lambda, N_0) \|L_tv\|_{L_{p}(\bR^d)},
    \end{equation}
    where $v\in H_{p}^{\sigma}(\bR^d)$. In particular, when $\sigma=1$, one can also consider $\nabla u$ instead of $L_tu$ or $\cL_t u$.
  
  $(ii)$ For any $u\in \bH_{p,0}^{\sigma}(T)$ satisfying
  \begin{equation} \label{maineq}
    \partial_t u=L_tu -\lambda u + f \text{ in } \bR^d_T,
  \end{equation}
  we have
  \begin{equation} \label{mainest}
    \|\partial_t u\|_{L_{p}(\bR_T^d)} + \|\cL_t u\|_{L_{p}(\bR_T^d)} + \lambda\|u\|_{L_{p}(\bR_T^d)} \leq N\|f\|_{L_{p}(\bR_T^d)},
  \end{equation}
  where $N$ depends only on $d,p,\sigma,\Lambda$, and $N_0$. In particular, when $\sigma=1$, one can also consider $\nabla u$ instead of $\cL_t u$.

  $(iii)$ For any $f\in L_{p}(\bR^d_T)$, there exists a unique solution $u\in \bH_{p,0}^\sigma(T)$ to \eqref{maineq}.
\end{theorem}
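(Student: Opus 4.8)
The plan is to prove Theorem \ref{mainthm} in the order (i) $\Rightarrow$ (ii) $\Rightarrow$ (iii), since the continuity and comparison estimate in (i) feed into the a priori estimate in (ii), which in turn yields uniqueness and (via the method of continuity) existence in (iii). For part (i), I would first establish boundedness of $\cL_t \colon H_p^\sigma(\bR^d) \to L_p(\bR^d)$ by splitting the integral defining $\cL_t$ into a near part $|y| \leq \rho$ and a far part $|y| > \rho$. For the far part, the bound $\nu_t(B_r^c) \leq \Lambda r^{-\sigma}$ controls the total mass on each dyadic annulus $2^j \rho < |y| \leq 2^{j+1}\rho$, and after a summation in $j$ one estimates $\|u(\cdot + y) - u\|_{L_p}$ either trivially by $2\|u\|_{L_p}$ or — when $\sigma \geq 1$ — by $|y|\,\|\nabla u\|_{L_p}$ using the cancellation condition \eqref{cancel_mu} for $\sigma = 1$. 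For the near part one uses the second-order Taylor remainder and the finiteness of $\int_{|y|\leq\rho} |y|^2\, \nu_t(dy)$, which again follows from \eqref{upper_mu} by dyadic summation. Optimizing over $\rho$ and interpolating the resulting $L_p$ bounds against $\|u\|_{L_p}$ and $\|(-\Delta)^{\sigma/2}u\|_{L_p}$ gives the boundedness; continuity then follows from linearity and density of $C_c^\infty$ in $H_p^\sigma$. The comparison estimate \eqref{oper_bdd} is the delicate point: the natural route is to pass to the Fourier side, where $\widehat{L_t v}(\xi) = \psi_t(\xi)\widehat{v}(\xi)$ and $\widehat{\cL_t v}(\xi) = \phi_t(\xi)\widehat{v}(\xi)$ with symbols $\psi_t,\phi_t$; the nondegeneracy \eqref{nonde} gives $|\psi_t(\xi)| \gtrsim |\xi|^\sigma$ while \eqref{upper_mu} gives $|\phi_t(\xi)| \lesssim |\xi|^\sigma$, so formally $\phi_t/\psi_t$ is a bounded multiplier; making this rigorous in $L_p$ requires derivative bounds on $\phi_t/\psi_t$ of Mikhlin–Hörmander type, and this is where I expect to invoke the machinery referenced in the introduction (the result from \cite{DR86} together with the new maximal function for general measures) rather than a direct multiplier computation, since the symbols need not be smooth.

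For part (ii), with part (i) in hand the strategy is the standard one for nonlocal parabolic $L_p$-theory: first prove the a priori estimate \eqref{mainest} for the model operator — here the fractional Laplacian $-(-\Delta)^{\sigma/2}$ (or, when $\sigma=1$, one uses the reduction to $\nabla u$) — for which $L_p$-solvability and the estimate $\|\partial_t u\|_{L_p} + \|(-\Delta)^{\sigma/2}u\|_{L_p} + \lambda\|u\|_{L_p} \leq N\|f\|_{L_p}$ are classical via Fourier multiplier theory on $\bR^{d+1}$. Then, for the general $L_t$ satisfying Assumption \ref{levy}, one upgrades to a mean-oscillation / level-set argument: one derives a local mean oscillation estimate for $\cL_t u$ (equivalently for $(-\Delta)^{\sigma/2}u$) on parabolic cylinders by freezing and comparing with a solution of the homogeneous model equation, carefully handling the tail contributions of the singular L\'evy measure — the point the authors flag as requiring \cite{DR86} in place of the direct approach of \cite{DL23}. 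The level-set bound is then fed into the crawling-of-ink-spots lemma (Lemma \ref{lem10161553}) to obtain the $L_p$ bound on $\cL_t u$; combined with the equation \eqref{maineq} this controls $\partial_t u$, and a separate argument (multiplying by $\lambda$ and using the equation, or a duality/scaling argument) controls $\lambda\|u\|_{L_p}$. The main obstacle throughout is precisely the tail analysis: because $\nu_t$ can be as singular as Example (3), one cannot localize the operator naively, and one must use the new maximal-function estimate to absorb the off-diagonal/tail terms into the mean-oscillation bound.

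For part (iii), uniqueness is immediate from the a priori estimate \eqref{mainest} applied with $f = 0$ and $\lambda \geq 0$ (one may need $\lambda > 0$ first and then remove it on finite time intervals by a standard bootstrap in $T$, or use that \eqref{mainest} already holds at $\lambda = 0$). For existence, I would use the method of continuity along the family $L_t^{(\theta)} := (1-\theta)(-(-\Delta)^{\sigma/2}) + \theta L_t$, $\theta \in [0,1]$: each $L_t^{(\theta)}$ still satisfies Assumption \ref{levy} with constants depending only on $\Lambda, N_0$ (the upper bound \eqref{upper_mu} and the nondegeneracy \eqref{nonde} are both stable under convex combinations with the stable measure, and the cancellation \eqref{cancel_mu} is linear), so the a priori estimate \eqref{mainest} holds uniformly in $\theta$; since existence holds at $\theta = 0$ by the classical model-case theory, the continuity method propagates it to $\theta = 1$. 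One should check at the outset that $\bH_{p,0}^\sigma(T)$ is a Banach space and that $u \mapsto (\partial_t u - L_t u + \lambda u, 0)$ is a bounded linear map into $L_p(\bR^d_T)$ — the boundedness being exactly part (i) — so that the method of continuity applies in the usual functional-analytic form.
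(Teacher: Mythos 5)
Your overall logical ordering is reversed and this is not a cosmetic point. You propose to prove (i) first, by a direct boundedness estimate for $\cL_t\colon H_p^\sigma\to L_p$, and then feed it into (ii). The paper goes the other way: it proves the a priori estimate \eqref{mainest} first (assuming the continuity of $L_t$, which is harmless because the base case $L_t=-(-\Delta)^{\sigma/2}$ is classical), and then \emph{derives} \eqref{oper_bdd} and the continuity of a general $L_t$ as consequences, via the trick of applying \eqref{mainest} with $\lambda=0$ to $u(t,x)=\eta(t/T)v(x)$ and letting $T\to\infty$, and by swapping the roles of the two operators in the pair $(\cL_t,L_t)$. This detour is not a matter of taste: a direct proof of (i) is exactly what fails for singular measures, and the reason the order is flipped is that the continuity itself is one of the main new results.

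Two concrete gaps in your proposed part (i). First, the near/far split with a second-order Taylor remainder on $|y|\le\rho$ gives a bound of the form $\rho^{-\sigma}\|u\|_{L_p}+\rho^{2-\sigma}\|D^2u\|_{L_p}$; optimizing over $\rho$ yields $\|u\|_{L_p}^{1-\sigma/2}\|D^2u\|_{L_p}^{\sigma/2}$. That quantity is \emph{not} controlled by $\|u\|_{H_p^\sigma}$ — the interpolation inequality runs in the opposite direction ($\|(-\Delta)^{\sigma/2}u\|_{L_p}\lesssim\|u\|_{L_p}^{1-\sigma/2}\|D^2u\|_{L_p}^{\sigma/2}$). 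Approximating $u\in H^\sigma_p$ by $C_c^\infty$ functions will have $\|D^2u_n\|_{L_p}\to\infty$ in general, so you cannot pass to the limit. Second, you propose to establish \eqref{oper_bdd} by showing $\phi_t/\psi_t$ is a Mikhlin–H\"ormander multiplier, with \cite{DR86} supplying the needed rough-multiplier technology. But the introduction explicitly states (Example~(2) and~(3)) that these symbols \emph{fail} the Fourier multiplier theorem, and \cite{DR86} is not used in the paper as a multiplier theorem at all: it is invoked once, in Lemma~\ref{lem_TLP}, to prove $L_p$-boundedness of the tail maximal operator $\bT_\kappa(t)$, which then feeds into the mean-oscillation argument for~(ii).

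On part (ii) you correctly identify the level-set / crawling-of-ink-spots machinery, but the structure is different from what you describe. You suggest freezing and comparing with the \emph{model} equation $\partial_tu=-(-\Delta)^{\sigma/2}u+f$; the paper instead proves the $p=2$ case by a Plancherel energy estimate (Proposition~\ref{prop2}) and then \emph{iterates in the exponent}: assuming \eqref{mainest} holds at some $p_0\ge2$, Lemmas~\ref{lem10142050}--\ref{lem10152345} give a good-$\lambda$ inequality producing the estimate for $p\in(p_0,p_1)$ with a uniform gap $p_1-p_0\ge\delta(d,\sigma)>0$, so finitely many steps reach any $p\ge2$; the decomposition $u=v+w$ uses the \emph{same} operator $L_t$ for both pieces, not the fractional Laplacian. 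The range $p\in(1,2)$ is handled by duality, which you omit. Your part (iii) is essentially correct and matches the paper: method of continuity along $(1-\theta)(-(-\Delta)^{\sigma/2})+\theta L_t$, with the uniform a priori estimate, uniqueness from $f=0$, and the known solvability of the model case.
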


Next, we show that although our operator is allowed to be highly singular, equations can be handled in weighted spaces in the special case when $d=1$ and $\sigma\in(1,2)$.

\begin{theorem} \label{thm_weight}
      Let $d=1$, $\sigma\in(1,2)$, $T\in(0,\infty)$, $\lambda\geq0$, $p,q\in(1,\infty)$, $K_0>0$, and $[\omega]_{q,p}\leq K_0$. Suppose that $L_t$ and $\cL_t$ satisfy Assumptions \ref{levy} and \ref{upper}, respectively.

  $(i)$ 
  For each $t>0$, the operators $L_t$ and $\cL_t$ are continuous from $H_{p,\omega_2}^{\sigma}(\bR)$ to $L_{p,\omega_2}(\bR)$. Moreover, for each $t>0$,
      \begin{equation*}
        \|\cL_t v\|_{L_{p,\omega_2}(\bR)} \leq N(d,p,\sigma,\Lambda, K_0, N_0) \|L_tv\|_{L_{p,\omega_2}(\bR)},
    \end{equation*}
    where $v\in H_{p,\omega_2}^{\sigma}(\bR)$.  In particular, when $\sigma=1$, one can also consider $\nabla u$ instead of $L_tu$ or $\cL_t u$.
  
  $(ii)$ For any $u\in \bH_{q,p,\omega,0}^{\sigma}(T)$ satisfying
  \begin{equation} \label{weighteq}
    \partial_t u=L_tu -\lambda u + f \text{ in } \bR_T
  \end{equation}
  and any operator $\cL_t$ satisfying Assumption \ref{levy}, 
  we have
  \begin{equation} \label{weightest}
    \|\partial_t u\|_{L_{q,p,\omega}(\bR_T)} + \|\cL_t u\|_{L_{q,p,\omega}(\bR_T)} + \lambda\|u\|_{L_{q,p,\omega}(\bR_T)} \leq N\|f\|_{L_{q,p,\omega}(\bR_T)},
  \end{equation}
  where $N$ depends only on $p,\sigma,\Lambda, K_0$, and $N_0$.  In particular, when $\sigma=1$, one can also consider $\nabla u$ instead of $\cL_t u$.

  $(iii)$ For any $f\in L_{q,p,\omega}(\bR_T)$, there exists a unique solution $u\in \bH_{q,p,\omega,0}^\sigma(T)$ to \eqref{weighteq}.
\end{theorem}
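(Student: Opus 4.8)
The plan is to reduce everything to a sharp mean oscillation (Campanato-type) estimate for $(-\Delta)^{\sigma/2}u$, then pass to weighted mixed-norm spaces by the Fefferman--Stein sharp function theorem and an $A_p$-weighted version of the Hardy--Littlewood maximal function bound, and finally close the existence and uniqueness statement by the method of continuity together with the a priori estimate. Part $(i)$ is the weighted analogue of Theorem \ref{mainthm}$(i)$: since the operator $L_t$ acts only in the space variable and $d=1$, one writes $L_tv = (-\Delta)^{\sigma/2}\big((-\Delta)^{-\sigma/2}L_t v\big)$ and shows that $(-\Delta)^{-\sigma/2}L_t$ is, up to harmless lower-order pieces, a Calderón--Zygmund-type operator whose kernel obeys the size and cancellation bounds guaranteed by Assumption \ref{upper} (the cancellation condition \eqref{cancel_mu} being vacuous here as $\sigma>1$); weighted $L_{p,\omega_2}$ boundedness then follows from the standard $A_p$ theory, and the comparison \eqref{oper_bdd}-type bound $\|\cL_t v\|_{L_{p,\omega_2}} \le N\|L_t v\|_{L_{p,\omega_2}}$ is obtained exactly as in the unweighted case by comparing both to $\|(-\Delta)^{\sigma/2}v\|_{L_{p,\omega_2}}$ using the nondegeneracy \eqref{nonde}. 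Here I would invoke the ``new maximal function associated with general measures'' and the result of \cite{DR86} mentioned in the introduction, which is precisely what makes the singular measures admissible.

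For part $(ii)$, the a priori estimate, I would first establish it for $u$ smooth and compactly supported (in the closure of the time-space cylinder), then extend by the density built into the definition of $\bH_{q,p,\omega,0}^\sigma(T)$. The heart is the mean oscillation estimate: for a paraboloid-type cylinder $Q$ of appropriate parabolic scaling (width $r$ in space, $r^\sigma$ in time) one splits $u = w + h$, where $w$ solves the equation with right-hand side $f\mathbf 1_{2Q}$ and zero initial data on a slightly larger cylinder, and $h = u - w$ solves the homogeneous equation $\partial_t h = L_t h - \lambda h$ on $2Q$. For $w$ one uses the unweighted $L_2$ (or $L_p$) estimate of Theorem \ref{mainthm} to control $\|(-\Delta)^{\sigma/2}w\|_{L_p(Q)}$ by $\|f\|_{L_p(2Q)}$. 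For $h$ one needs an interior regularity gain: $(-\Delta)^{\sigma/2}h$ should have small oscillation on $Q$ compared to its average on $2Q$, which is obtained by an iteration/bootstrap argument (as flagged in the introduction) exploiting that $h$ solves a homogeneous nonlocal equation — here one differentiates or uses finite differences in $x$, applies the $L_p$ estimate again on shrinking cylinders, and sums a geometric series; the condition $d=1$, $\sigma\in(1,2)$ is what guarantees the tail terms $\int_{|y|\ge r}|h(t,x+y)|\,\nu_t(dy)$ are controllable (the integrability of $|y|^{-1-\sigma}$ near the diagonal is delicate precisely in the borderline $d=\sigma=1$ regime, which is why that case is excluded). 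Combining, one gets
\[
\Big(\aint_Q \big|(-\Delta)^{\sigma/2}u - c_Q\big|\,\Big) \le N \kappa^{\alpha}\Big(\aint_{2Q}|(-\Delta)^{\sigma/2}u|\Big) + N\kappa^{-\beta}\Big(\aint_{2Q}|f|^p\Big)^{1/p}
\]
for any dilation factor $\kappa\ge 2$, uniformly in $t$ (no time regularity needed). Feeding this into the Fefferman--Stein theorem with $A_q(\mathbb R)\otimes A_p(\mathbb R)$ weights, absorbing the first term by choosing $\kappa$ large and $\lambda$-dependent bookkeeping for the zeroth-order term, yields \eqref{weightest}; the statement for $\nabla u$ when $\sigma = 1$ follows from part $(i)$.

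For part $(iii)$, existence is proved by the method of continuity: the estimate \eqref{weightest} holds uniformly along the family $L_t^{(s)} := (1-s)(-\Delta)^{\sigma/2} + s L_t$, $s\in[0,1]$, each of which satisfies Assumption \ref{levy} with constants independent of $s$ (convexity of the nondegeneracy and upper bounds); solvability at $s=0$ is the classical weighted result for the fractional heat equation (e.g.\ as in \cite{DL23}), so the a priori bound propagates solvability to $s=1$. Uniqueness is immediate from \eqref{weightest} with $f=0$. The main obstacle I anticipate is the homogeneous-equation regularity step for $h$: getting a genuine oscillation decay for $(-\Delta)^{\sigma/2}h$ out of a merely bounded-measurable-in-time, highly singular-in-space operator requires the iteration to be set up so that at each stage the tail contributions are reabsorbed, and this is where the restriction to $d=1$, $\sigma>1$ is essential and where the measure-theoretic maximal function machinery from \cite{DR86} must be deployed with care.
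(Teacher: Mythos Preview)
Your overall architecture---decompose $u=v+w$, control $w$ by the unweighted theory, gain regularity for the homogeneous piece $v$ by iteration, assemble a mean-oscillation bound, then apply the weighted Fefferman--Stein sharp function theorem, and close with the method of continuity---matches the paper. But the mechanism by which $d=1$, $\sigma\in(1,2)$ enters is not what you describe, and this is the crux. The mean oscillation estimate you write is too local: the actual inequality (equation \eqref{eq11292035}) has on the right not averages over $2Q$ but infinite sums $\sum_{j\ge 0}2^{(1-\sigma)j}(|\cL u|^{p_0})_{(t_0-R^\sigma,t_0)\times B_{2^jR}(x_0)}^{1/p_0}$ (and similarly for $f$). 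The nonlocal tails do not vanish; they are reorganized into these dyadic sums over expanding balls. What $d=1$, $\sigma>1$ buys is that $\sum_{z\in\bZ}(1+|z|)^{-\sigma}<\infty$ and $\sum_j 2^{(1-\sigma)j}<\infty$, so these sums are dominated pointwise by the \emph{strong maximal function} $\cS\cM$, which \emph{is} bounded on $A_{q/p_0}\otimes A_{p/p_0}$-weighted spaces.

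This means your plan to deploy \cite{DR86} in the weighted proof has the logic backwards. The maximal operator $\bT_\kappa$ built via \cite{DR86} is only bounded on unweighted $L_p$; there is no $A_p$ version, and the Remark following Theorem \ref{thm_weight} shows explicitly that for general singular $\nu$ the operator fails to be bounded on $L_p(|x|^\ell dx)$ once $d\ge 2$ or $\sigma<1$. The restriction $d=1$, $\sigma>1$ is precisely what lets one \emph{avoid} $\bT_\kappa$ and reduce to the ordinary strong maximal function. A related issue: your approach to part $(i)$ via showing $(-\Delta)^{-\sigma/2}L_t$ is Calder\'on--Zygmund will not succeed for the singular measures permitted here (e.g.\ Example~(3) in the Introduction), since the symbol $m(t,\xi)/|\xi|^\sigma$ need not be smooth. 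The paper instead deduces $(i)$ from $(ii)$ by the trick in Step~\textbf{1} of the proof of Theorem~\ref{mainthm}: apply the a~priori estimate to $u(t,x)=\eta(t/T)v(x)$ with time-frozen operators and let $T\to\infty$.
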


\begin{remark}
    In Theorem \ref{thm_weight} $(i)$, if we further assume that $\cL_t$ satisfies Assumption \ref{levy}, then $L_t$ and $\cL_t$ are equivalent in the sense that for each $t>0$,
      \begin{equation*}
        N^{-1}\|L_tv\|_{L_{p,\omega_2}(\bR^d)} \leq \|\cL_t v\|_{L_{p,\omega_2}(\bR^d)} \leq N\|L_tv\|_{L_{p,\omega_2}(\bR^d)}.
    \end{equation*}
\end{remark}

\begin{remark}
Let $u\in \bH_{q,p,\omega,0}^\sigma(T)$ be a solution to \eqref{maineq}. Note that by extending $u$ to be zero for $t<0$, $u,\partial_t u \in L_q((-\infty,T),\omega_1dt; L_{p,\omega_2}(\bR^d))$. Since by the fundamental theorem of calculus,
\begin{equation*}
    u(t,x)=\int_0^t \partial_s u(s,x)ds,
\end{equation*}
we can apply the Minkowski inequality to get
\begin{equation*}
    \|u(t,\cdot)\|_{L_{p,\omega_2}(\bR^d)} \leq 
    \int_0^t \left\|\partial_s u(t,\cdot)\right\|_{L_{p,\omega_2}(\bR^d)} ds \leq T \bM_t(\left\|\partial_t u(t,\cdot)\right\|_{L_{p,\omega_2}(\bR^d)})(t),
\end{equation*}
where $\bM_t$ denotes the standard maximal function in $t$ (see \eqref{eq11192353}). Hence, by the Hardy-Littlewood theorem and \eqref{weightest},
\begin{equation} \label{eq110722228}
    \| u\|_{L_{q,p,\omega}(\bR_T^d)} \leq N\min\{T,\lambda^{-1}\} \|\partial_t u\|_{L_{q,p,\omega}(\bR_T^d)},
\end{equation}
where $N$ is independent of $T$ and $\lambda$.
\end{remark}

\begin{remark}
    In \cite{DL23}, the authors studied the equation in $\bH_{q,p,\omega,0}^\sigma(T)$, where the L\'evy measure $\nu(dy)$ is of prototype $|y|^{-d-\sigma}dy$. In this remark, we show that such weighted spaces cannot be considered for general L\'evy measures when $d\geq2$ or $d=1$ with $\sigma\in(0,1)$. Whether the case $d=1$ with $\sigma=1$ can be handled in $\bH_{q,p,\omega,0}^\sigma(T)$ remains open.

 Our aim is to show that there exist a (time-independent) operator $L$ and a weight $\omega=\omega(x)\in A_p(\bR^d)$ such that $L$ is not a bounded operator in $L_{p,\omega}(\bR^d)$ for some $p\in (1,\infty)$. Here, we note that once the claim is proved, the extrapolation theorem (see e.g. \cite[Theorem 2.5]{DK18}) implies that the same conclusion holds for every $p\in (1,\infty)$.

Let us consider
 \begin{equation*}
     \nu(dy) := \sum_{k\in\bZ} 2^{-k\sigma} \left(\delta_{2^k}(dy_1) + \delta_{2^k}(-dy_1)\right)\otimes\delta_0^{d-1}(dy') + |y'|^{1-d-\sigma}dy'\otimes\delta_0(dy_1),
 \end{equation*}
 where the corresponding operator is of the form
 \begin{equation*}
     Lv(x):= L_{x_1}v(x) -c(-\Delta_{x'})^{\sigma/2}v(x),
 \end{equation*}
 where $x=(x_1,x')$ and $c>0$. Our aim is to prove that if $l\in [\sigma p,d(p-1))$, then $L$ is not a bounded operator in $L_{p}(\bR^d,|x|^ldx)$. The range is nonempty for large $p>1$ when $d=2$ or $d=1$ with $\sigma\in(0,1)$. We also remark that under the range of $l$, $|x|^l dx$ is in the Muckenhoupt $A_p$ class (see e.g. \cite[Example 7.1.7]{G14}).

    Let $v\in C_c^\infty(\bR^d)$ be a nonnegative function such that $v=1$ on $[-1,1]^d$, and $\text{supp}(v)\subset [-2,2]^d$. Then for $x\in \bR^d$ such that $x'\in[-1,1]^{d-1}$ and $x_1>3$, one can compute that
    \begin{equation*}
        Lv(x) = L_{x_1}v(x) = \sum_{j\in\bZ} 2^{-j\sigma} v(x+2^j) \geq \sum_{k=2}^\infty 2^{-k\sigma} 1_{(2^k-1,2^k+1)}(x_1).
    \end{equation*}
   Thus,
    \begin{align*}
        \|Lv\|_{L_{p}(\bR^d,|x|^ldx)}^p &\geq N  \int_{(3,\infty)\times [-1,1]^{d-1}} \sum_{k=2}^\infty 2^{-k\sigma p} 1_{(2^k-1,2^k+1)}(x_1) |x|^l dx.
    \end{align*}
    Since
    \begin{equation*}
        \int_{(3,\infty)} \sum_{k=2}^\infty 2^{-k\sigma p} 1_{(2^k-1,2^k+1)}(x_1) |x|^l dx_1 \geq N\sum_{k=2}^\infty 2^{k(l-\sigma p)}=\infty
    \end{equation*}
    when $l-\sigma p\geq 0$, we deduce that $\|Lv\|_{L_{p}(\bR^d,|x|^ldx)}=\infty$.

    Now we show that $\|(-\Delta)^{\sigma/2} v\|_{L_{p}(\bR^d,|x|^ldx)} <\infty$. For $|x|>3\sqrt{d}$ and $x+y\in [-2,2]^d$,
    \begin{equation*}
        |y|\geq|x|-|x+y|\geq |x|-2\sqrt{d}\geq |x|/3.
    \end{equation*}
    Thus,
    \begin{align*}
        |(-\Delta)^{\sigma/2}v(x)| \leq \|v\|_{L_{\infty}(\bR^d)} \int_{x+y\in [-2,2]^d} |y|^{-d-\sigma} dy \leq N|x|^{-d-\sigma},
    \end{align*}
    which implies that
    \begin{equation} \label{eq11062133}
        \int_{|x|>3\sqrt{d}} |(-\Delta)^{\sigma/2}v(x)|^p |x|^l dx \leq N\int_{|x|>3\sqrt{d}} |x|^{l-dp -\sigma p} dx <\infty.
    \end{equation}
Since $v\in C_c^\infty(\bR^d)$, $(-\Delta)^{\sigma/2} v$ is a continuous function on $\bR^d$. This fact together with \eqref{eq11062133} yields $\|(-\Delta)^{\sigma/2} v\|_{L_{p}(\bR^d,|x|^ldx)} <\infty$.

\end{remark}

\section{Solvability of equations with general operators} \label{sec3}

\subsection{The case of $p=2$}

In this section, we prove Theorem \ref{mainthm} when $p=2$.

\begin{prop} \label{prop2}
  Theorem \ref{mainthm} holds when $p=2$.
\end{prop}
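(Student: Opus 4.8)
The plan is to prove all three parts by Fourier analysis, which is available because $p=2$. Writing the L\'evy symbol of $L_t$ as $\psi_t(\xi)=\int_{\bR^d}(e^{i\xi\cdot y}-1-i\xi\cdot y^{(\sigma)})\,\nu_t(dy)$, so that $\cF[L_tv](\xi)=\psi_t(\xi)\widehat v(\xi)$ for $v\in C_c^\infty(\bR^d)$, everything rests on two symbol bounds. First, under Assumption \ref{upper}, $|\psi_t(\xi)|\le N(d,\sigma,\Lambda)|\xi|^\sigma$ for all $t,\xi$; I would prove this by splitting the integral at $|y|=|\xi|^{-1}$, using $|e^{i\theta}-1-i\theta|\le\theta^2/2$ and $|e^{i\theta}-1|\le\min(2,|\theta|)$ together with the moment bounds $\int_{|y|\le r}|y|^2\,\nu_t(dy)\le N\Lambda r^{2-\sigma}$ and (for $\sigma\in(1,2)$) $\int_{|y|>r}|y|\,\nu_t(dy)\le N\Lambda r^{1-\sigma}$, which follow from \eqref{upper_mu} by a layer-cake estimate. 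Second, under \eqref{nonde}, $-\operatorname{Re}\psi_t(\xi)=\int(1-\cos(\xi\cdot y))\,\nu_t(dy)\ge(1-\cos 1)N_0|\xi|^\sigma$, which is immediate from $1-\cos s\ge(1-\cos1)s^2$ on $[-1,1]$ and \eqref{nonde}. I expect the main obstacle to be the first bound at the borderline $\sigma=1$ with $|\xi|$ small: on the relevant annuli ($|\xi|^{-1}\wedge1\le|y|\le|\xi|^{-1}\vee1$) the naive estimate only gives something of order $|\xi|\log(1/|\xi|)$, and one has to reinstate the drift $i\xi\cdot y$ there by means of the cancellation \eqref{cancel_mu}, turning $e^{i\xi\cdot y}-1$ back into $e^{i\xi\cdot y}-1-i\xi\cdot y$ before applying the quadratic bound.

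Given these, part (i) is Plancherel: $\|\cL_tv\|_{L_2}\le N\||\xi|^\sigma\widehat v\|_{L_2}\le N\|v\|_{H_2^\sigma(\bR^d)}$ gives boundedness, hence continuity, of $L_t$ and $\cL_t$ from $H_2^\sigma$ to $L_2$; and $\|\cL_tv\|_{L_2}\le N\||\xi|^\sigma\widehat v\|_{L_2}\le N\|\psi_t^{L}\widehat v\|_{L_2}=N\|L_tv\|_{L_2}$, where the lower bound $|\psi_t^L(\xi)|\ge(1-\cos1)N_0|\xi|^\sigma$ comes from the second symbol estimate, is \eqref{oper_bdd}. When $\sigma=1$ the symbol of $\nabla$ is $i\xi$ with $|i\xi|=|\xi|^\sigma$, so $\nabla$ slots into the same chain of inequalities. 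One first carries this out for $v\in C_c^\infty$ and extends by density, identifying $L_tv$ with the Fourier multiplier $\psi_t^L\widehat v$.

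For part (ii), taking the Fourier transform of \eqref{maineq} in $x$ and using $u,\partial_tu\in L_2(\bR^d_T)$ (so that $t\mapsto\widehat u(t,\xi)$ lies in $W^{1,2}(0,T)$ with $\widehat u(0,\xi)=0$ for a.e.\ $\xi$), the equation becomes the scalar ODE $\partial_t\widehat u=(\psi_t^L(\xi)-\lambda)\widehat u+\widehat f$ with zero initial value, whose integrating-factor solution satisfies $|\widehat u(t,\xi)|\le\int_0^t e^{-(t-s)\kappa(\xi)}|\widehat f(s,\xi)|\,ds$ with $\kappa(\xi)=(1-\cos1)N_0|\xi|^\sigma+\lambda$. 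Since $|\xi|^\sigma+\lambda\le N\kappa(\xi)$, extending $\widehat f$ by zero outside $(0,T)$ and applying Young's convolution inequality in $t$ (the kernel $e^{-\cdot\,\kappa(\xi)}1_{\cdot\ge0}$ has $L_1$-norm $\kappa(\xi)^{-1}$) gives $\|(|\xi|^\sigma+\lambda)\widehat u(\cdot,\xi)\|_{L_2(0,T)}\le N\|\widehat f(\cdot,\xi)\|_{L_2(0,T)}$ for each $\xi$; integrating in $\xi$ and invoking Plancherel yields $\|(-\Delta)^{\sigma/2}u\|_{L_2(\bR^d_T)}+\lambda\|u\|_{L_2(\bR^d_T)}\le N\|f\|_{L_2(\bR^d_T)}$. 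The first symbol bound then gives $\|\cL_tu\|_{L_2}\le N\||\xi|^\sigma\widehat u\|_{L_2}\le N\|f\|_{L_2}$ and $\|L_tu\|_{L_2}\le N\|f\|_{L_2}$, whence $\|\partial_tu\|_{L_2}\le\|L_tu\|_{L_2}+\lambda\|u\|_{L_2}+\|f\|_{L_2}\le N\|f\|_{L_2}$; for $\sigma=1$, $\|\nabla u\|_{L_2}=N\||\xi|^\sigma\widehat u\|_{L_2}\le N\|f\|_{L_2}$. This is \eqref{mainest}.

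For part (iii), uniqueness follows at once, since a solution of \eqref{maineq} with $f=0$ has $\widehat u\equiv0$ by the ODE representation (equivalently, by \eqref{mainest}). For existence I would construct $u$ directly from $\widehat u(t,\xi)=\int_0^t\exp(\int_s^t(\psi_r^L(\xi)-\lambda)\,dr)\widehat f(s,\xi)\,ds$; the estimates of part (ii) used only $\widehat f\in L_2(\bR^d_T)$, so this $u$ has $u,\partial_tu,(-\Delta)^{\sigma/2}u\in L_2(\bR^d_T)$ and, upon differentiating in $t$ (valid for a.e.\ $t$ since $\widehat f(\cdot,\xi)\in L_1(0,T)$ for a.e.\ $\xi$), solves \eqref{maineq} with zero initial trace. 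That $u$ actually lies in $\bH_{2,0}^\sigma(T)$ I expect to be routine and standard: approximating $f$ in $L_2(\bR^d_T)$ by $f_n\in C_c^\infty(\bR^d_T)$ gives $u_n\to u$ in $\bH_2^\sigma(T)$ by the a priori estimate applied to $u-u_n$, so it suffices to know $u_n\in\bH_{2,0}^\sigma(T)$ for smooth data; this follows by the method of continuity connecting $L_t$ to $-(-\Delta)^{\sigma/2}$ — whose inhomogeneous problem is explicitly solvable by the stable heat semigroup and whose solution for compactly supported data is approximable in $\bH_2^\sigma(T)$ by cutoff and mollification — using that the operators $\theta L_t-(1-\theta)(-\Delta)^{\sigma/2}$, $\theta\in[0,1]$, satisfy Assumption \ref{levy} with constants independent of $\theta$.
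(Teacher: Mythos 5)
Your proposal is correct, and the key inputs are the same as the paper's: the two symbol bounds $|\psi_t(\xi)|\le N|\xi|^\sigma$ (under Assumption \ref{upper}) and $-\operatorname{Re}\psi_t(\xi)\ge N_0'|\xi|^\sigma$ (under \eqref{nonde}), established exactly as you describe, including the reinsertion of the drift on the intermediate annuli via \eqref{cancel_mu} when $\sigma=1$. Part (i) is identical to the paper. Where you diverge is in the a priori estimate of part (ii): the paper uses the energy method, multiplying the equation by $(-\Delta)^{\sigma/2}u$ (and then by $\lambda u$), passing to Fourier variables, and exploiting the sign of $\operatorname{Re}(m(t,\xi))|\xi|^\sigma|\widehat u|^2$ to absorb the cross term; you instead solve the frequency-space ODE by an integrating factor, take absolute values to get the one-sided convolution bound $|\widehat u(t,\xi)|\le\int_0^t e^{-(t-s)\kappa(\xi)}|\widehat f(s,\xi)|\,ds$ with $\kappa(\xi)\sim|\xi|^\sigma+\lambda$, and apply Young's inequality in $t$. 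Both are standard $L_2$ routes and rely on the identical dissipativity input; the energy argument is softer and avoids the explicit representation, while your integrating-factor computation has the small advantage of directly producing the candidate solution formula, which you then reuse for existence in part (iii). Part (iii) ultimately comes back to the method of continuity (to place the constructed $u$ in $\bH_{2,0}^\sigma(T)$ via smooth approximation and the known solvability for $-(-\Delta)^{\sigma/2}$), essentially as in the paper, and your observation that the interpolating operators $\theta L_t-(1-\theta)(-\Delta)^{\sigma/2}$ satisfy Assumption \ref{levy} with $\theta$-uniform constants is correct and needed there.
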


\begin{proof}
  $(i)$
  First, we prove that $L_t$ is continuous. Note that
  \begin{equation} \label{eq12041535}
    \widehat{L_tv}(t,\xi)=m(t,\xi)\widehat{v}(\xi),
  \end{equation}
  where
      \begin{equation*}
    m(t,\xi):=\int_{\bR^d} \left(e^{i\xi\cdot y}-1 - i\xi\cdot y^{(\sigma)}\right) \nu_t(dy).
  \end{equation*}
  Thus, by the Plancherel theorem and \eqref{ineq9251512},
  \begin{align*}
    \|L_tv\|_{L_2(\bR^d)}=\|\widehat{L_tv}\|_{L_2(\bR^d)} \leq N \||\xi|^\sigma \widehat{v}\|_{L_2(\bR^d)} = N\|(-\Delta)^{\sigma/2}u\|_{L_2(\bR^d)},
  \end{align*}
  which implies the continuity of $L_t$.
  The continuity of $\cL_t$ follows in the same way.
  For the special case where $\sigma=1$ and $L_tv=\nabla v$, one just needs to use $\widehat{\nabla v}(\xi) = -i\xi \widehat{v}(\xi)$ in place of \eqref{eq12041535}.

  For \eqref{oper_bdd}, by the Plancherel theorem and \eqref{ineq9260032},
    \begin{align*}
     \||\xi|^\sigma \widehat{v}\|_{L_2(\bR^d)} &\leq N\|-\mathrm{Re}(m(t,\xi))\widehat{v}\|_{L_2(\bR^d)} 
     \\
     &\leq N \|\widehat{L_tv}\|_{L_2(\bR^d)} = N \|L_tv\|_{L_2(\bR^d)}.
  \end{align*}
  This and the continuity of $\cL_t$ yield the desired result.

  $(ii)$
  We next consider the a priori estimate \eqref{mainest}. Due to \eqref{oper_bdd} with $L_t=-(-\Delta)^{\sigma/2}$, we only need to prove the estimate when $\cL_t=-(-\Delta)^{\sigma/2}$. In addition, due to the denseness of $C_c^\infty([0,T]\times\bR^d)$ in $\bH_{2,0}^{\sigma}(T)$ and the continuity of $L_t$, it suffices to prove the estimate for $u\in C_c^\infty([0,T]\times\bR^d)$ such that $u(0,\cdot)=0$.
  
  Let us multiply both sides of \eqref{maineq} by $(-\Delta)^{\sigma/2}u$. Then,
  \begin{align} \label{eq10171751}
    \int_{\bR^d_T} \partial_tu(-\Delta)^{\sigma/2}u \, dxdt &= \int_{\bR^d_T} L_tu(-\Delta)^{\sigma/2}u \, dxdt - \lambda \int_{\bR^d_T} u(-\Delta)^{\sigma/2}u \, dxdt \nonumber
    \\
    &\quad + \int_{\bR^d_T} f(-\Delta)^{\sigma/2}u \, dxdt.
  \end{align}
For the first term, since $u$ is a real function, by the Plancherel theorem,
\begin{align} \label{eq10171755}
  \int_{\bR^d_T} \partial_tu(-\Delta)^{\sigma/2}u \, dxdt &= \frac{1}{2}\int_{\bR^d_T} \overline{\widehat{\partial_tu}(t,\xi)}|\xi|^\sigma \widehat{u}(t,\xi) \, d\xi dt \nonumber
  \\
  &\quad + \frac{1}{2}\int_{\bR^d_T} \widehat{\partial_tu}(t,\xi) |\xi|^\sigma\overline{\widehat{u}(t,\xi)}  \, d\xi dt \nonumber
  \\
  &= \frac{1}{2} \int_{\bR^d_T} |\xi|^\sigma \partial_t(|\widehat{u}(t,\xi)|^2) \, d\xi dt = \frac{1}{2} \int_{\bR^d} |\xi|^\sigma |\widehat{u}(T,\xi)|^2 d\xi\geq0.
\end{align}
Again by the Plancherel theorem,
\begin{align}
  \int_{\bR^d_T} L_tu(-\Delta)^{\sigma/2}u \, dxdt &= \int_{\bR^d_T} m(t,\xi)|\xi|^\sigma |\widehat{u}(t,\xi)|^2 d\xi dt \nonumber
  \\
  &= \int_{\bR^d_T} \text{Re}(m(t,\xi))|\xi|^\sigma |\widehat{u}(t,\xi)|^2 d\xi dt \nonumber
  \\
  &\leq -N\int_{\bR^d_T} |\xi|^{2\sigma} |\widehat{u}(t,\xi)|^2 d\xi dt \nonumber
  \\
  &= -N\int_{\bR^d_T}|(-\Delta)^{\sigma/2}u(t,x)|^2 dxdt.
\end{align}
Here, for the inequality, we used \eqref{ineq9260032}.
For the second term on the right-hand side of \eqref{eq10171751}, the Plancherel theorem yields that
\begin{align}
    \lambda \int_{\bR^d_T} u(-\Delta)^{\sigma/2}u \, dxdt = \lambda \int_{\bR^d_T} |\xi|^\sigma |\widehat{u}(t,\xi)|^2 d\xi dt \geq 0.
\end{align}
For the last term, by H\"older's inequality, we get
\begin{align} \label{eq10171758}
    \int_{\bR^d_T} f(-\Delta)^{\sigma/2}u \, dxdt \leq \|(-\Delta)^{\sigma/2}u\|_{L_{2}(\bR_T^d)}\|f\|_{L_{2}(\bR_T^d)}.
\end{align}
Combining \eqref{eq10171755}-\eqref{eq10171758}, we have
\begin{equation} \label{eq10171759}
    \|(-\Delta)^{\sigma/2}u\|_{L_{2}(\bR_T^d)} \leq N\|f\|_{L_{2}(\bR_T^d)}.
\end{equation}

Similarly, by multiplying both sides of \eqref{maineq} by $\lambda u$ in place of $(-\Delta)^{\sigma/2} u$, one can deduce that
\begin{equation*}
    \lambda\|u\|_{L_{2}(\bR_T^d)} \leq N\|f\|_{L_{2}(\bR_T^d)}.
\end{equation*}
This, \eqref{eq10171759}, and the fact that $\partial_tu = L_tu-\lambda u+f$ lead to \eqref{mainest}.

$(iii)$ For the solvability, one just needs to use the method of continuity with \eqref{mainest}. We remark that the case $L_t=-(-\Delta)^{\sigma/2}$ was already proved (see e.g. \cite[Theorem 1]{MP19}).
The proposition is proved.
\end{proof}

\subsection{Auxiliary results}

In this section, we provide estimates necessary to derive $L_p$ estimates for all $p\in(1,\infty)$.

To handle general L\'evy measures, we define
\begin{equation} \label{eq10181920}
    (\cT_\kappa^R(t) u)(x) := \cT_\kappa^R u(t,x) := \kappa^\sigma R^{\sigma-d/p} \int_{B_{\kappa R}^c} \|u(t,\cdot)\|_{L_p(B_R(x+y))} \nu_t(dy),
\end{equation}
which can be interpreted as a convolution of $\|u(t,\cdot)\|_{L_p(B_R(y))}$ with $\nu_t(dy)$ over the tail.

\begin{lemma} \label{lem10142050}
  Let $p\in(1,\infty), \sigma\in(0,2), T\in(0,\infty), \lambda\geq0$, and $R>0$. Take $\zeta_0\in C_c^\infty(B_R)$ such that $\zeta_0=1$ in $B_{R/2}$ and
  \begin{align*}
    |D_x\zeta_0|\leq N R^{-1}, \quad |D_x^2\zeta_0|\leq N R^{-2}.
  \end{align*}
  Suppose that $L_t$ and $\cL_t$ satisfy Assumptions \ref{levy} and \ref{upper}, respectively. Assume that Theorem \ref{mainthm} holds for this $p$. If $u\in \bH_{p,0}^{\sigma}(T)$ satisfies
  \begin{equation*}
    \partial_tu = L_tu -\lambda u + f \text{ in } \bR_T^d,
  \end{equation*}
  then for any $\varepsilon \in (0,2^{-\sigma})$,
  \begin{align} \label{eq10120058}
  &\|\partial_t(u\zeta_0)\|_{L_p(\bR^d_T)} + \|\cL_t(u\zeta_0)\|_{L_p(\bR^d_T)} + \lambda \|u\zeta_0\|_{L_p(\bR^d_T)} \nonumber
    \\
    &\leq N_\varepsilon \left( \|f\|_{L_p\left((0,T)\times B_{R}\right)} + \frac{1}{R^{\sigma}} \|u\|_{L_p\left((0,T)\times B_{R}\right)} \right) \nonumber
    \\
    &\quad + \frac{N}{R^{\sigma-d/p}} \sum_{k=0}^\infty \varepsilon^k 2^{k\sigma} \left( \int_0^T \left|\cT_{p,2^{-k-4}}^R u(t,0) \right|^p dt \right)^{1/p},
  \end{align}
  where $N_\varepsilon=(\varepsilon,d,p,\sigma,\Lambda,N_0)$ and $N=N(d,p,\sigma,\Lambda,N_0)$. In particular, when $\sigma=1$, one can also consider $\nabla u$ instead of $\cL_t u$.
\end{lemma}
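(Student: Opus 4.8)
The plan is to multiply \eqref{maineq} by $\zeta_0$, rewrite the result as an equation for $v:=u\zeta_0$, and invoke the already available global estimate of Theorem~\ref{mainthm}. Since $\zeta_0\in C_c^\infty(B_R)$ and $u\in\bH_{p,0}^\sigma(T)$, also $v\in\bH_{p,0}^\sigma(T)$, and using $\partial_t u=L_tu-\lambda u+f$ and $\nabla(u\zeta_0)=\zeta_0\nabla u+u\nabla\zeta_0$ one checks directly that
\[
\partial_t v=L_tv-\lambda v+g,\qquad g:=\zeta_0 f+\mathcal C_tu,
\]
\[
\mathcal C_tu(t,x):=\zeta_0(x)L_tu(t,x)-L_t(u\zeta_0)(t,x)=\int_{\bR^d}\Bigl(u(t,x+y)\bigl(\zeta_0(x)-\zeta_0(x+y)\bigr)+u(t,x)\nabla\zeta_0(x)\cdot y^{(\sigma)}\Bigr)\nu_t(dy).
\]
By Theorem~\ref{mainthm}(i), $L_tu$ and $L_t(u\zeta_0)$ lie in $L_p(\bR^d_T)$, so $g\in L_p(\bR^d_T)$, and Theorem~\ref{mainthm}(ii) applied to $v$ gives
\[
\|\partial_t v\|_{L_p(\bR^d_T)}+\|\cL_tv\|_{L_p(\bR^d_T)}+\lambda\|v\|_{L_p(\bR^d_T)}\le N\|g\|_{L_p(\bR^d_T)}\le N\|f\|_{L_p((0,T)\times B_R)}+N\|\mathcal C_tu\|_{L_p(\bR^d_T)},
\]
with $N=N(d,p,\sigma,\Lambda,N_0)$; for $\sigma=1$ with $\nabla$ in place of $\cL_t$ on the left one uses the corresponding clause of Theorem~\ref{mainthm}(ii) and absorbs the harmless term $u\nabla\zeta_0$. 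So everything reduces to the pointwise-in-$t$ estimate
\[
\|\mathcal C_tu(t,\cdot)\|_{L_p(\bR^d)}\le N_\varepsilon R^{-\sigma}\|u(t,\cdot)\|_{L_p(B_R)}+\frac{N}{R^{\sigma-d/p}}\sum_{k=0}^\infty\varepsilon^k2^{k\sigma}\,\cT_{2^{-k-4}}^Ru(t,0),
\]
after which one takes $L_p(0,T)$ norms; this is the real content.

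\textbf{Step 2 (the commutator kernel and where the assumptions enter).}
Fix $t$ and split $\mathcal C_tu=\mathcal C^{(1)}+\mathcal C^{(2)}$, using $u(x)\nabla\zeta_0(x)\cdot y^{(\sigma)}=u(x+y)\nabla\zeta_0(x)\cdot y^{(\sigma)}-(u(x+y)-u(x))\nabla\zeta_0(x)\cdot y^{(\sigma)}$, so that
\[
\mathcal C^{(1)}(x)=\int_{\bR^d}u(x+y)K_y(x)\,\nu_t(dy),\quad K_y(x):=\zeta_0(x)-\zeta_0(x+y)+\nabla\zeta_0(x)\cdot y^{(\sigma)},
\]
\[
\mathcal C^{(2)}(x)=-\nabla\zeta_0(x)\cdot\int_{\bR^d}\bigl(u(x+y)-u(x)\bigr)y^{(\sigma)}\nu_t(dy).
\]
The gain is that $K_y$ vanishes to second order at $y=0$: from $|D\zeta_0|\le NR^{-1}$, $|D^2\zeta_0|\le NR^{-2}$ and Taylor's theorem, $|K_y(x)|\le N\min\{1,R^{-2}|y|^2\}$ for $|y|\le1$, $|K_y(x)|\le N(1+R^{-1}|y^{(\sigma)}|)$ for $|y|>1$, and $K_y$ is supported in $x\in\overline{B_R}\cup(\overline{B_R}-y)$. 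This is exactly what is needed, because \eqref{equiv} only yields $\int_{|y|<r}|y|^2\,\nu_t(dy)\le N\Lambda r^{2-\sigma}$ and $\int_{|y|>r}|y|\,\nu_t(dy)\le N\Lambda r^{1-\sigma}$ (the latter when $\sigma>1$), while $\int_{|y|<r}|y|\,\nu_t(dy)$ is in general infinite when $\sigma\ge1$ — so the first-order part of the commutator must be handled by genuine cancellation, not by absolute convergence. For $\mathcal C^{(2)}$: it vanishes identically when $\sigma\in(0,1)$; when $\sigma\in(1,2)$ it is supported in $\overline{B_R}\setminus B_{R/2}$ and one uses $y^{(\sigma)}=y$ with $\int_{|y|>r}|y|\,\nu_t(dy)\le N\Lambda r^{1-\sigma}$ on $|y|>R$ and the second-order regrouping on $|y|\le R$; when $\sigma=1$ one reduces by density to smooth $u$ and uses Assumption~\ref{levy}(iii) to remove $\nabla u(x)\cdot y$ inside $\int_{r_1\le|y|\le r_2}$, recasting the small-$|y|$ part as a second-order ($\nu_t$-integrable) expression. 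This $\sigma=1$ step is the most delicate piece of the bookkeeping.

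\textbf{Step 3 (from the kernel bounds to the claimed estimate; the main obstacle).}
For $\mathcal C^{(1)}$ I will apply Minkowski's integral inequality, $\|\mathcal C^{(1)}(t,\cdot)\|_{L_p}\le\int_{\bR^d}\|u(\cdot+y)K_y(\cdot)\|_{L_p}\,\nu_t(dy)$, where by the support of $K_y$, for each $y$,
\[
\|u(\cdot+y)K_y(\cdot)\|_{L_p(\bR^d)}\le\|K_y\|_{L_\infty}\bigl(\|u\|_{L_p(B_R+y)}+\|u\|_{L_p(B_R)}\bigr),
\]
the first norm from the part of $\operatorname{supp}K_y$ inside $\overline{B_R}$ and the second from the part with $x\notin\overline{B_R}$, $x+y\in\overline{B_R}$ (there $K_y(x)=-\zeta_0(x+y)$, and a translation keeps that mass inside $B_R$). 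Decompose $\{|y|>0\}$ into the dyadic shells $S_k=\{2^{-k-5}R<|y|\le2^{-k-4}R\}$ ($k\ge0$) and $\{|y|>2^{-4}R\}$; on $S_k$ one has $|K_y|\le N\min\{1,2^{-2k}\}$ and $\nu_t(S_k)\le\Lambda(2^{-k-5}R)^{-\sigma}$ by \eqref{equiv}, so the $\|u\|_{L_p(B_R)}$-contributions of the shells sum (since $\sigma<2$) to $N\Lambda R^{-\sigma}\|u\|_{L_p(B_R)}$, while the $\|u\|_{L_p(B_R+y)}$-contributions together with $\{|y|>2^{-4}R\}$ assemble, after identifying $\int_{|y|>2^{-j-4}R}\|u\|_{L_p(B_R+y)}\nu_t(dy)$ with $2^{(j+4)\sigma}R^{d/p-\sigma}\cT_{2^{-j-4}}^Ru(t,0)$, into $NR^{d/p-\sigma}\sum_k c_k\,\cT_{2^{-k-4}}^Ru(t,0)$; the estimate of $\mathcal C^{(2)}$ is handled the same way in each of the three $\sigma$-regimes. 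The step I expect to be hardest is to make the weights $c_k$ conform to the geometric profile $\varepsilon^k2^{k\sigma}$ demanded by the statement \emph{uniformly over the very singular $\nu_t$ allowed by Assumption~\ref{levy}}: a single splitting as above only gives $c_k\lesssim2^{k(\sigma-2)}$, which suffices when $\varepsilon\ge1/4$, but for smaller $\varepsilon$ one must in addition peel off from $\|u\|_{L_p(B_R+y)}$ the part of $u$ living on the thin lunes $(B_R+y)\setminus B_R$ near $\partial B_R$ — precisely what $\cT_{2^{-k-4}}^Ru(t,0)$ measures at the $k$-th dyadic scale — and iterate the localization, each iteration trading a factor $2^{\sigma}$ from the change of scale against a factor one arranges to be $\lesssim4\varepsilon$; the resulting geometric series converges exactly because $\varepsilon<2^{-\sigma}$, and the compounded constant on the purely local term is $N_\varepsilon$. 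Combining with Step 1 and taking $L_p(0,T)$ norms gives \eqref{eq10120058}.
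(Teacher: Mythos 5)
Your Step 1 correctly isolates the commutator $\cC_t u=\zeta_0 L_t u-L_t(u\zeta_0)$ as the object to estimate, and the kernel bounds in Step 2 are essentially right. But Step 3 contains a genuine gap, and the paper's actual proof takes a structurally different route that you did not reconstruct.

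The paper does \emph{not} work with a single cutoff $\zeta_0$. It introduces a nested family $\zeta_k\in C_c^\infty(B_{r_{k+1}})$ with $r_k=R(1-2^{-k-1})$ and $\zeta_k=1$ on $B_{r_k}$, applies Theorem~\ref{mainthm} to each $u\zeta_k$, and estimates the commutator $L_t(u\zeta_k)-\zeta_k L_t u$ via the appendix lemma (\eqref{eq10102353}, \eqref{eq10121035}, \eqref{eq10131542}). For $\sigma\in(0,1)$ the $k=0$ step alone finishes, but for $\sigma\ge 1$ the commutator estimate at level $k$ produces a term $\frac{2^{k(\sigma-1)}}{R^{\sigma-1}}\|\nabla u\|_{L_p((0,T)\times B_{r_{k+3}})}$, which the paper controls by the interpolation inequality
\[
\|\nabla v\|_{L_p}\le N\varepsilon^{1/(1-\sigma)}\|v\|_{L_p}+\varepsilon\|\cL_t v\|_{L_p}
\]
applied to $v=u\zeta_{k+3}$. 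This links consecutive levels: the estimate for $\|\cL_t(u\zeta_k)\|$ now contains $\varepsilon^3\|\cL_t(u\zeta_{k+3})\|$ on the right. Multiplying by $\varepsilon^k$ and summing over $k$ makes the chained terms absorbable \emph{provided} $\varepsilon 2^\sigma<1$ (so the geometric factor $(\varepsilon 2^\sigma)^k$ from the $\frac{2^{k\sigma}}{R^\sigma}\|u\|_{L_p}$ terms sums), and simultaneously produces the tail profile $\sum_k\varepsilon^k 2^{k\sigma}\cT^R_{p,2^{-k-4}}u$ exactly as in the statement. The constant $\varepsilon<2^{-\sigma}$ and the $\varepsilon^k$ weights are artifacts of this iteration, \emph{not} of any decomposition of a single commutator.

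Your Step 3 tries to manufacture the $\varepsilon^k2^{k\sigma}$ weights from a dyadic shell decomposition of $\cC_t u$ for the fixed cutoff $\zeta_0$. You yourself observe that a single splitting only yields $c_k\lesssim 2^{k(\sigma-2)}$, which matches the target only for $\varepsilon\ge 1/4$, and for smaller $\varepsilon$ you sketch an iteration of ``peeling off thin lunes near $\partial B_R$.'' This sketch is not a proof: it does not specify what quantity is being iterated, does not produce the interpolation step that is the actual source of the $\varepsilon$, and — most importantly — it sidesteps the gradient term $\|\nabla u\|$ which, for $\sigma\ge 1$, is the one term that cannot be controlled either by $\|u\|_{L_p(B_R)}$ alone or by the tail operator $\cT^R_{p,\kappa}u$ alone. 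Without the nested cutoffs and the interpolation-absorption scheme, there is no mechanism to close the estimate for $\sigma\ge 1$, and your argument is essentially only valid for $\sigma\in(0,1)$ (and even there only for $\varepsilon\ge 1/4$, since you never use the freedom to enlarge the tail sum).

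In short: your reduction is correct, but the heart of the lemma is the iteration over nested cutoffs combined with the interpolation inequality, and that is exactly the part of your proposal that is missing or replaced by a hand-wave.
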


\begin{proof}
    Let 
    \begin{equation*}
         r_k:= R(1-2^{-k-1}), \quad k\geq0,
    \end{equation*}
    and consider cutoff functions $\zeta_k=\zeta_k(x)$ such that $\zeta_k \in C_c^\infty(B_{r_{k+1}})$, $0\leq\zeta_k\leq1$, $\zeta_k=1$ in $B_{r_k}$, and
  \begin{align*}
    |D_x\zeta_k|\leq N\frac{2^k}{R}, \quad |D_x^2\zeta_k|\leq N \frac{2^{2k}}{R^2}.
  \end{align*}

One can see that $u\zeta_k\in \bH_{p,0}^\sigma(T)$ satisfies
\begin{equation} \label{eq10112337}
    \partial_t (u\zeta_k) = L_t(u\zeta_k) -\lambda u\zeta_k + f\zeta_k + \zeta_kL_tu - L_t(u\zeta_k) \text{ in } \bR^d_T.
\end{equation}
Thus, if we apply Theorem \ref{mainthm} to \eqref{eq10112337}, then we have
\begin{align} \label{eq10120052}
&\|\partial_t(u\zeta_k)\|_{L_p(\bR^d_T)} + \|\cL_t(u\zeta_k)\|_{L_p(\bR^d_T)} + \lambda \|u\zeta_k\|_{L_p(\bR^d_T)} \nonumber
\\
&\leq N \left(\|f\zeta_k\|_{L_p(\bR^d_T)} + \|L_t(u\zeta_k)-\zeta_kL_tu\|_{L_p(\bR^d_T)} \right) \nonumber
\\
&\leq N\left(\|f\|_{L_p((0,T)\times B_R)} + \|L_t(u\zeta_k)-\zeta_kL_tu\|_{L_p(\bR^d_T)} \right). 
\end{align}

Now we consider three cases: $\sigma\in(0,1)$, $\sigma\in(1,2)$, and $\sigma=1$.

\vspace{1mm}
$(1)$ $\sigma\in(0,1)$.

In this case, \eqref{eq10120058} follows from \eqref{eq10120052} and \eqref{eq10102353} with $k=0$.

\vspace{1mm}
$(2)$ $\sigma\in(1,2)$.

In this case, by applying \eqref{eq10121035} to \eqref{eq10120052},
\begin{align} \label{eq10121053}
&\|\partial_t(u\zeta_k)\|_{L_p(\bR^d_T)} + \|\cL_t(u\zeta_k)\|_{L_p(\bR^d_T)} + \lambda \|u\zeta_k\|_{L_p(\bR^d_T)} \nonumber
\\
&\leq N \|f\|_{L_p((0,T)\times B_R)} + N\frac{2^{k(\sigma-1)}}{R^{\sigma-1}} \|\nabla u\|_{L_p\left((0,T)\times B_{r_{k+3}}\right)} + N \frac{2^{k\sigma}}{R^\sigma} \|u\|_{L_p\left((0,T)\times B_{R}\right)} \nonumber
      \\
      &\quad + N\frac{2^{k\sigma}}{R^{\sigma-d/p}} \left( \int_0^T \left|\cT_{p,2^{-k-4}}^R u(t,0) \right|^p dt \right)^{1/p}
\end{align}
By a well-known interpolation inequality (see e.g. \cite[Lemma A.1]{DL23}), for any $\varepsilon>0$,
\begin{equation*}
    \|\nabla v\|_{L_p(\bR^d)} \leq N(d,\sigma,p)\varepsilon^{1/(1-\sigma)}\|v\|_{L_p(\bR^d)} + \varepsilon \|(-\Delta)^{\sigma/2}v\|_{L_p(\bR^d)}.
\end{equation*}
Since Theorem \ref{mainthm} holds for $p$, by \eqref{oper_bdd} and choosing $\varepsilon>0$ appropriately again,
\begin{equation*}
    \|\nabla v\|_{L_p(\bR^d)} \leq N(d,\sigma,p,\Lambda,N_0)\varepsilon^{1/(1-\sigma)}\|v\|_{L_p(\bR^d)} + \varepsilon \|\cL_t v\|_{L_p(\bR^d)}
\end{equation*}
for any $t\in(0,T)$. Using this,
\begin{align*}
    &\frac{2^{k(\sigma-1)}}{R^{\sigma-1}} \|\nabla u\|_{L_p\left((0,T)\times B_{r_{k+3}}\right)} \leq \frac{2^{k(\sigma-1)}}{R^{\sigma-1}} \|\nabla (u\zeta_{k+3})\|_{L_p\left(\bR^d_T\right)} \nonumber
    \\
    &\leq N\frac{2^{k\sigma}}{R^\sigma}\varepsilon^{3/(1-\sigma)}\|u\zeta_{k+3}\|_{L_p(\bR^d_T)} + \varepsilon^3 \|\cL_t(u\zeta_{k+3})\|_{L_p(\bR^d_T)} \nonumber
    \\
    &\leq N\frac{2^{k\sigma}}{R^\sigma}\varepsilon^{3/(1-\sigma)}\|u\|_{L_p((0,T)\times B_R)} + \varepsilon^3 \|\cL_t(u\zeta_{k+3})\|_{L_p(\bR^d_T)}.
\end{align*}
Thus, by multiplying $\varepsilon^k$ to both sides of \eqref{eq10121053} and summing in $k$, we have
\begin{align} \label{eq10130033}
    &\sum_{k=0}^\infty \varepsilon^k \left(\|\partial_t(u\zeta_k)\|_{L_p(\bR^d_T)} + \lambda \|u\zeta_k\|_{L_p(\bR^d_T)}  +  \|\cL_t(u\zeta_k)\|_{L_p(\bR^d_T)} \right) \nonumber
    \\
    &\leq N \|f\|_{L_p((0,T)\times B_R)}\sum_{k=0}^\infty \varepsilon^k + N\frac{\varepsilon^{3/(1-\sigma)}}{R^\sigma}\|u\|_{L_p\left((0,T)\times B_{R}\right)} \sum_{k=0}^\infty(\varepsilon2^\sigma)^k \nonumber
    \\
    &\quad+ \sum_{k=0}^\infty \varepsilon^{k+3} \|\cL_t(u\zeta_{k+3})\|_{L_p(\bR^d_T)} \nonumber
    \\
    &\quad+ \frac{N}{R^{\sigma-d/p}} \sum_{k=0}^\infty \varepsilon^k 2^{k\sigma} \left( \int_0^T \left|\cT_{p,2^{-k-4}}^R u(t,0) \right|^p dt \right)^{1/p}.
\end{align}
To conclude \eqref{eq10120058}, it remains to prove $\sum_{k=0}^\infty \varepsilon^{k+3} \|\cL(u\zeta_{k+3})\|_{L_p(\bR^d_T)}<\infty$, which allows us to absorb this term in \eqref{eq10130033} to the left-hand side. Suppose that the right-hand side of \eqref{eq10120058} is finite.
Now we restrict the range of $\varepsilon\in(0,2^{-\sigma})$ to guarantee that $\sum_{k=0}^\infty(\varepsilon2^\sigma)^k < \infty$. In this range with $\sigma>1$, we further obtain that
\begin{equation*}
    \sum_{k=0}^\infty \varepsilon^k \frac{2^{k(\sigma-1)}}{R^{\sigma-1}} \|\nabla u\|_{L_p\left((0,T)\times B_{r_{k+3}}\right)} \leq N \sum_{k=0}^\infty \varepsilon^k \frac{2^{k(\sigma-1)}}{R^{\sigma-1}} \|u\|_{\bH_p^\sigma(T)} <\infty.
\end{equation*}
Thus, if we multiply both sides of \eqref{eq10121053} by $\varepsilon^k$ and take the sum over $k$, then we see that
\begin{equation} \label{eq10132057}
    \sum_{k=0}^\infty \varepsilon^k \|\cL_t(u\zeta_k)\|_{L_p(\bR^d_T)} < \infty.
\end{equation}
Hence, the claim is proved.

\vspace{1mm}
  $(3)$ $\sigma=1$.

In this case, thanks to \eqref{oper_bdd}, it suffices to prove \eqref{eq10120058} with $L_t (u\zeta_0)$ instead of $\cL_t (u\zeta_0)$ on the left-hand side.
  As in \eqref{eq10121053}, by \eqref{eq10120052} and \eqref{eq10131542}, 
\begin{align} \label{eq10132056}
&\|\partial_t(u\zeta_k)\|_{L_p(\bR^d_T)} + \|L_t(u\zeta_k)\|_{L_p(\bR^d_T)} + \lambda \|u\zeta_k\|_{L_p(\bR^d_T)} \nonumber
\\
&\leq N \|f\|_{L_p((0,T)\times B_R)} + N\varepsilon^{-3}\frac{2^k}{R} \|u\|_{L_p\left((0,T)\times B_{R}\right)} \nonumber
      \\
      &\quad + \varepsilon^3\|\nabla(u\zeta_{k+3})\|_{L_p\left(\bR^d_T\right)} + N \frac{2^{k}}{R^{1-d/p}} \left( \int_0^T \left|\cT_{p,2^{-k-4}}^R u(t,0) \right|^p dt \right)^{1/p}.
\end{align}
Since $u\in \bH_p^1(T)$, for $\varepsilon<2^{-\sigma}=2^{-1}$,
\begin{equation*}
    \sum_{k=0}^\infty \varepsilon^{k+3}\|\nabla(u\zeta_{k+3})\|_{L_p\left(\bR^d_T\right)} \leq N(R)\sum_{k=0}^\infty \varepsilon^{k+3} \left(\|\nabla u\|_{L_p\left(\bR^d_T\right)} + 2^k\|u\|_{L_p\left(\bR^d_T\right)} \right)<\infty.
\end{equation*}
Hence, this and \eqref{eq10132056} lead to \eqref{eq10132057}.
  Note that by \eqref{oper_bdd} with
  $\nabla$ instead of $\cL_t$,
  \begin{align*}
      \|\nabla u\|_{L_p\left((0,T)\times B_{r_{k+3}}\right)} \leq \|\nabla (u\zeta_{k+3})\|_{L_p\left(\bR^d_T\right)} \leq N\|L_t (u\zeta_{k+3})\|_{L_p\left(\bR^d_T\right)},
  \end{align*}
where we used the nondegenerate condition \eqref{nonde}.
Thus, by \eqref{eq10132056} and replacing $\varepsilon>0$ appropriately again, 
\begin{align*}
&\|\partial_t(u\zeta_k)\|_{L_p(\bR^d_T)} + \|L_t(u\zeta_k)\|_{L_p(\bR^d_T)} + \lambda \|u\zeta_k\|_{L_p(\bR^d_T)} \nonumber
\\
&\leq N \|f\|_{L_p((0,T)\times B_R)} + N\varepsilon^{-3}\frac{2^k}{R} \|u\|_{L_p\left((0,T)\times B_{R}\right)} \nonumber
      \\
      &\quad + \varepsilon^3\|L_t(u\zeta_{k+3})\|_{L_p\left(\bR^d_T\right)} + N \frac{2^{k}}{R^{1-d/p}} \left( \int_0^T \left|(\cT_{p,2^{-k-4}}^R u)(t,0) \right|^p dt \right)^{1/p}.
\end{align*}
Similar to \eqref{eq10130033}, we can conclude \eqref{eq10120058} by multiplying both sides by $\varepsilon^k$ and taking the sum over $k$.
  The lemma is proved.
\end{proof}

\begin{remark} \label{rem1127}
    In Lemma \ref{lem10142050}, we can replace the condition $u\in\bH_{p,0}^\sigma(T)$ with the weaker condition
    \begin{equation*}
        u\zeta_k \in \bH_{p,0}^\sigma(T), \quad L_tu \in L_{p}((0,T)\times B_{r_{k+1}})  \quad \text{ for any } k\in \bN,
    \end{equation*}
    where $\zeta_k$ is defined as in the proof of the lemma.
\end{remark}

Let us introduce some notation which will be used below. 
For a given nonlocal operator $\cL_t$, we denote by $\widetilde{\nu}_t$ its corresponding L\'evy measure, and
  \begin{equation*}
      \widetilde{\cT}_{p,\kappa}^R u(t,x) := \kappa^\sigma R^{\sigma-d/p} \int_{B_{\kappa R}^c} \|u(t,\cdot)\|_{L_p(B_R(x+y))} \widetilde{\nu}_t(dy).
  \end{equation*}
For $R>0$ and $(t,x)\in \bR^{d+1}$, we write
\begin{equation*}
    Q_R(t,x)=(t-R^\sigma,t)\times B_R(x).
\end{equation*}
For a function $g$ on $\bR^{d+1}$ and any $D\subset \bR^{d+1}$, we set
\begin{equation*}
    (g)_D=\aint_D |g(t,x)|dtdx,
\end{equation*}
and the (parabolic) maximal function
\begin{equation*}
    \bM g(t_0,x_0) = \sup_{Q_R(t,x)\ni (t_0,x_0)} (g)_{Q_R(t,x)}.
\end{equation*}
We also consider the standard maximal function in the time variable
\begin{equation} \label{eq11192353}
    \bM_t h(t_0) = \sup_{(t-r^\sigma,t)\ni t_0} \aint_{t-r^\sigma}^{t} h(s)ds,
\end{equation}
and the following maximal operators involving L\'evy measure
\begin{align*}
      (\bT_\kappa(t) u)(x) := \bT_\kappa u(t,x) = \sup_{R>0} \kappa^\sigma R^{\sigma-d} \int_{B_{\kappa R}^c} \int_{B_{R}} |u(x+y+z)| dz\nu_t(dy).
  \end{align*}
Similar to $\widetilde{\cT}_{p,\kappa}^R u$, $(\widetilde{\bT}_\kappa(t) u)(x) = \widetilde{\bT}_\kappa u(t,x)$ is defined by using $\widetilde{\nu}_t(dy)$ in place of $\nu_t(dy)$.

\begin{lemma} \label{lem10151525}
    Let $p\in(1,\infty), \sigma\in(0,2), T\in(0,\infty)$, and $\lambda\geq0$.
  Suppose that $L_t$ and $\cL_t$ satisfy Assumptions \ref{levy} and \ref{upper}, respectively, and that $\cL_t\equiv\cL$ is independent of $t$. Assume that Theorem \ref{mainthm} holds for this $p$. Let $u\in \bH_{p,0}^{\sigma}(T)$ satisfy
  \begin{equation*}
    \partial_tu = L_tu -\lambda u + f \text{ in } \bR_T^d.
  \end{equation*}
  Then there exists $p_1=p_1(d,\sigma,p)\in(p,\infty]$ such that
  \begin{equation} \label{eq10201608}
      p_1-p>\delta(d,\sigma)>0,
  \end{equation}
  and the following holds. For any $(t_0,x_0)\in \bR^d_T$ and $R>0$, there exist $v,w\in \bH_{p,0}^\sigma(S,t_0)$ with $S:=\min\{0,t_0-R^\sigma\}$ such that $u=v+w$ and
  \begin{align} \label{eq101032344}
      (|\cL w|^p)_{Q_{R}(t_0,x_0)}^{1/p} + (|\lambda w|^p)_{Q_{R}(t_0,x_0)}^{1/p} \leq N(|f|^p)_{Q_{2R}(t_0,x_0)}^{1/p},
  \end{align}
  \begin{align} \label{eq101032342}
      &(|\cL v|^{p_1})_{Q_{R/2}(t_0,x_0)}^{1/p_1} \nonumber
      \\
      &\leq N (|f|^{p})_{Q_{2R}(t_0,x_0)}^{1/p} + N\left(\aint_{t_0-R^\sigma}^{t_0} \left|\widetilde{\cT}_{p,2^{-1}}^Rf(t,x_0)\right|^{p} dt\right)^{1/p} + N (|\cL u|^{p})_{Q_{R}(t_0,x_0)}^{1/p} \nonumber
      \\
      &\quad+ N\sum_{k=0}^\infty 2^{-k\sigma} \left(\aint_{t_0-R^\sigma}^{t_0} \left|\cT^R_{p,2^{-k-4}} (\cL u)(t,x_0)\right|^p dt\right)^{1/p},
  \end{align}
  and
  \begin{align} \label{eq101032343}
      &(|\lambda v|^{p_1})_{Q_{R/2}(t_0,x_0)}^{1/{p_1}} \nonumber
      \\
      &\leq N (|f|^{p})_{Q_{2R}(t_0,x_0)}^{1/p} + N \left( \aint_{t_0-R^\sigma}^{t_0} \left|\widetilde{\cT}_{p,2^{-1}}^Rf(t,x_0) \right|^p dt \right)^{1/p} + N (|\lambda u|^{p})_{Q_{R}(t_0,x_0)}^{1/p} \nonumber
      \\
      &\quad+ N\sum_{k=0}^\infty 2^{-k\sigma} \left(\aint_{t_0-R^\sigma}^{t_0} \left|\cT^R_{p,2^{-k-4}} (\lambda u) (t,x_0) \right|^p dt\right)^{1/p},
  \end{align}
  where $N=N(d,p,\sigma,\Lambda,N_0)$, and $u$ and $f$ are extended to be zero for $t<0$.
\end{lemma}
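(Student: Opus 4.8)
The plan is to construct the decomposition $u = v + w$ by solving an auxiliary equation for the ``bad'' part $w$ on the cylinder $(S,t_0)\times\bR^d$ with the right-hand side $f$ truncated to the larger cylinder, and then defining $v := u - w$. Concretely, let $\zeta$ be a spatial cutoff that equals $1$ on $B_{2R}(x_0)$ and is supported in, say, $B_{4R}(x_0)$, and let $w \in \bH_{p,0}^\sigma(S,t_0)$ solve
\begin{equation*}
    \partial_t w = \cL w - \lambda w + f\zeta \quad \text{in } (S,t_0)\times\bR^d, \qquad w(S,\cdot)=0.
\end{equation*}
This equation is solvable and satisfies the a priori estimate because Theorem \ref{mainthm} is assumed to hold for this $p$ (and $\cL$ satisfies Assumption \ref{levy}, being time-independent as in the hypothesis). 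Applying \eqref{mainest} on $(S,t_0)\times\bR^d$ gives
\begin{equation*}
    \|\cL w\|_{L_p((S,t_0)\times\bR^d)} + \lambda\|w\|_{L_p((S,t_0)\times\bR^d)} \leq N\|f\zeta\|_{L_p((S,t_0)\times\bR^d)} \leq N\|f\|_{L_p(Q_{4R}(t_0,x_0))},
\end{equation*}
and after averaging over $Q_R(t_0,x_0)$ this yields \eqref{eq101032344} (up to adjusting the constant $4$ to $2$ by a standard covering/rescaling, or simply enlarging the radius in the statement — I will keep $Q_{2R}$ and choose the cutoff accordingly). The point of truncating $f$ by $\zeta$ is that on $Q_{R/2}(t_0,x_0)$ the difference $v$ solves a \emph{homogeneous-data} problem up to a tail, which is what buys the higher integrability.

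Next I would analyze $v = u - w$. On $(S,t_0)\times\bR^d$ it satisfies
\begin{equation*}
    \partial_t v = \cL v - \lambda v + (1-\zeta)f + (L_t - \cL)u \quad \text{in } (S,t_0)\times\bR^d.
\end{equation*}
Inside $Q_R(t_0,x_0)$ the term $(1-\zeta)f$ vanishes, but $(L_t-\cL)u$ does not, so $v$ is not a solution of the homogeneous equation; instead it is a solution with right-hand side $(L_t-\cL)u$, whose $L_p$ norm on $Q_R$ is controlled by $\|L_t u\|_{L_p(Q_R)} + \|\cL u\|_{L_p(Q_R)}$, and by \eqref{oper_bdd} the first is comparable to the second. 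To get the $p_1$-integrability I would localize further: multiply $v$ by the cutoff $\zeta_0$ from Lemma \ref{lem10142050} (scaled to $Q_R(t_0,x_0)$) and apply Lemma \ref{lem10142050} together with Remark \ref{rem1127} to $v\zeta_0$, which produces exactly the tail sums $\sum_k 2^{-k\sigma}(\cdots)$ and the $\widetilde{\cT}$-terms appearing on the right of \eqref{eq101032342}. The passage from an $L_p$ bound on $\cL(v\zeta_0)$ to an $L_{p_1}$ bound on $\cL v$ over the smaller cylinder $Q_{R/2}$ comes from a self-improving (reverse-Hölder / Gehring-type) mechanism: since $\partial_t(v\zeta_0) - \cL(v\zeta_0)$ is, on $Q_{R/2}$, an expression built from data that is itself $p$-integrable with a gain, one iterates the localized estimate at dyadic scales and invokes the standard higher-integrability lemma for parabolic nonlocal equations (the same device that in \cite{DL23} produces the exponent $p_1 > p$ with a quantitative gap $p_1 - p > \delta(d,\sigma)$). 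The estimate \eqref{eq101032343} for $\lambda v$ is obtained identically, multiplying the equation by $\lambda$ rather than applying $\cL$, so I would prove the two in parallel.

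The main obstacle, as the introduction signals, is the treatment of the tail term $(L_t - \cL)u$ and the associated $\cT$-operators in the presence of very singular L\'evy measures: one cannot bound $\|L_t(v\zeta_0) - \zeta_0 L_t v\|_{L_p}$ by a convolution against an $L^1$ kernel in the usual way, so the nonlocal tail must be captured through the operators $\cT^R_{p,\kappa}$ and $\bT_\kappa$ and their properties must be compatible with the higher-integrability iteration. Making the dyadic sum $\sum_k 2^{-k\sigma}(\cdots)$ converge with the correct power of $2^{-k\sigma}$ — which is what the factor $\kappa^\sigma R^{\sigma - d/p}$ in the definition \eqref{eq10181920} is designed to supply — is the delicate bookkeeping step, and verifying that these maximal-type operators satisfy the hypotheses of the harmonic-analysis result of \cite{DR86} (so that they are bounded on the relevant $L_p$) is where most of the real work lies; this is presumably carried out in the appendix and invoked here. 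Once those tail bounds are in hand, combining them with Lemma \ref{lem10142050}, Remark \ref{rem1127}, \eqref{oper_bdd}, and the reverse-Hölder iteration yields \eqref{eq101032342} and \eqref{eq101032343}, completing the lemma.
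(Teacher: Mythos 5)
Your decomposition is not the same as the paper's, and the difference is fatal rather than cosmetic. The paper solves the auxiliary problem for $w$ with the \emph{same} nondegenerate operator $L_t$ appearing in the equation for $u$ (namely $\partial_t w = L_t w - \lambda w + \eta f$ with a space-time cutoff $\eta$ equal to $1$ on $Q_{3R/2}$); the bound on $\cL w$ then comes for free from \eqref{mainest}, whose left-hand side allows an arbitrary $\cL_t$ satisfying Assumption \ref{upper}. You instead solve $\partial_t w = \cL w - \lambda w + f\zeta$ with the operator $\cL$. This is a problem at two levels. First, $\cL$ is only assumed to satisfy Assumption \ref{upper}, not the nondegeneracy condition \eqref{nonde}, so Theorem \ref{mainthm} does not give you existence or an a priori estimate for that auxiliary problem. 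Second, even ignoring that, your $v = u - w$ now satisfies $\partial_t v = \cL v - \lambda v + (1-\zeta)f + (L_t-\cL)u$, and the commutator term $(L_t-\cL)u$ does \emph{not} vanish in $Q_R$. That right-hand side is merely $L_p$ in the cylinder, so $v$ is not in a position to enjoy any integrability gain: the whole point of the Caccioppoli-plus-Sobolev mechanism is that $v$ should solve, locally, an equation whose data lives outside $Q_R$. The paper's choice of $L_t$ for the auxiliary problem eliminates the commutator entirely.

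You also misidentify the source of the higher exponent $p_1$. It is not a Gehring/reverse-H\"older self-improvement: the paper fixes $p_1$ explicitly by the parabolic Sobolev embedding $1/p_1 = 1/p - \sigma/(2(d+\sigma))$ (or $p_1=\infty$ if $p>d/\sigma+1$), applied not to $v$ itself but to $\cL\psi$ where $\psi=v\chi$. The crucial step you omit is that, because $\cL$ is time-independent, one may apply $\cL$ to the equation $\partial_t\psi = L_t\psi - \lambda\psi + \chi(1-\eta)f + v\partial_t\chi$ and obtain a parabolic equation for $\cL\psi$ driven by $L_t$:
\begin{equation*}
\partial_t(\cL\psi) = L_t(\cL\psi) - \lambda\,\cL\psi + \cL[\chi(1-\eta)f] + (\cL v)\partial_t\chi.
\end{equation*}
Lemma \ref{lem10142050} then gives an $\bH^\sigma_p$-type bound for $\zeta_0\cL\psi$, and the Sobolev embedding of Lemma \ref{lem11072224} (via \eqref{eq11072241}) converts this into an $L_{p_1}$ bound for $\cL\psi = \cL v$ on $Q_{R/2}$. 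The tail terms $\cT^R_{p,2^{-k-4}}(\cL\psi)$ are split into $\cT^R_{p,2^{-k-4}}(\cL u)$ and $\cT^R_{p,2^{-k-4}}(\cL w)$, the latter absorbed into the $f$-term via \eqref{eq10140009}; the term $\cL[\chi(1-\eta)f]$ is where $\widetilde{\cT}^R_{p,2^{-1}} f$ comes from. Without the $\cL$-commuting trick and the Sobolev embedding for $\cL\psi$, there is no route to the stated $L_{p_1}$ gain, and the iteration you sketch is not carried out and would not produce the quantitative gap \eqref{eq10201608}.
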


\begin{proof}
Without loss of generality, we may assume that $x_0=0$. We also recall that the extended function $u$, which is in the space $\bH_{p,0}^\sigma(S,T)$, is a solution to 
  \begin{equation*}
    \partial_tu = L_tu -\lambda u + f \quad \text{ in } (S,T)\times \bR^d.
  \end{equation*}

First, we prove \eqref{eq101032344}.
    Let $\eta$ be a cutoff function such that $0\leq \eta \leq 1$, $\text{supp}(\eta)\subset (t_0-(2R)^{\sigma}, t_0+(2R)^{\sigma})\times B_{2R}$, and $\eta=1$ in $Q_{3R/2}(t_0,0)$. By Theorem \ref{mainthm}, there exists $w \in \bH_{p,0}^\sigma(S,t_0)$ such that 
  \begin{equation*}
    \partial_tw = L_tw -\lambda w + \eta f \text{ in } (S,t_0)\times \bR^d.
  \end{equation*}
  Moreover, due to \eqref{mainest},
    \begin{eqnarray} \label{eq10140009}
         &\|\partial_t w\|_{L_{p}((S,t_0)\times \bR^d)} + \|\cL w\|_{L_{p}((S,t_0)\times \bR^d)} + \lambda\|w\|_{L_{p}((S,t_0)\times \bR^d)}& \nonumber
         \\
         &\leq N\|\eta f\|_{L_{p}((S,t_0)\times \bR^d)} \leq N\|f\|_{L_{p}(Q_{2R}(t_0,0))}.&
    \end{eqnarray}
    Here, we emphasize that the theorem can be applied with general $S\neq0$ by a shift of the coordinates.
    Now \eqref{eq101032344} can be obtained by dividing both sides of \eqref{eq10140009} by $R^{(d+\sigma)/p}$.

    Next, we show \eqref{eq101032342}. Let $v:=u-w$, and we further take $\chi\in C^\infty(\bR)$ so that $0\leq\chi\leq1$, $\text{supp}(\chi)\subset (t_0-R^\sigma,t_0+R^\sigma)$, and $\chi=1$ in $(t_0-(R/2)^\sigma,t_0)$. Then $\psi(t,x):=v(t,x)\chi(t) \in \bH_{p,0}^\sigma(S,t_0)$ satisfies
    \begin{equation} \label{eq10142001}
    \partial_t\psi = L_t\psi -\lambda \psi + \chi(1-\eta) f + v\partial_t\chi \text{ in } (S,t_0)\times\bR^d.
  \end{equation}
Since $\cL$ is time-independent, one can take $\cL$ on both sides to obtain that
\begin{equation*}
        \partial_t (\cL\psi) = L(\cL\psi) -\lambda \cL\psi + \cL[\chi(1-\eta) f] + (\cL v)\partial_t\chi \text{ in }(S,t_0)\times\bR^d.
\end{equation*}
If necessary, we mollify \eqref{eq10142001} to justify this procedure.

Let $\zeta_0$ be taken from Lemma \ref{lem10142050}, and $p_1\in (p,\infty)$ so that
\begin{equation*}
    1/p_1=1/p-\sigma/2(d+\sigma) \quad \text{ if } p\leq d/\sigma +1,
\end{equation*}
and $p_1=\infty$ if $p>d/\sigma+1$. Then due to $p>1$, when $p_1<\infty$,
\begin{equation*}
    p_1-p = \frac{2p(d+\sigma)}{2d+2\sigma-p\sigma}-p = \frac{p^2\sigma}{2d+2\sigma-p\sigma} > \frac{\sigma}{2d+\sigma},
\end{equation*}
which guarantees \eqref{eq10201608}. We also note that this is obvious when $p_1=\infty$.

  By \eqref{eq11072241} with $p_1$ instead of $l$, and \eqref{eq10120058} with $
  \varepsilon=2^{-2\sigma}$,
  \begin{align} \label{eq10151503}
      &(|\cL v|^{p_1})_{Q_{R/2}(t_0,0)}^{1/{p_1}} = (|\cL \psi|^{p_1})_{Q_{R/2}(t_0,0)}^{1/{p_1}} \nonumber
      \\
      &\leq NR^\vartheta \left( \|\partial_t(\zeta_0\cL \psi)\|_{L_p((t_0-R^\sigma,t_0)\times \bR^d)} + \|(-\Delta)^{\sigma/2}(\zeta_0\cL \psi)\|_{L_p((t_0-R^\sigma,t_0)\times \bR^d)} \right) \nonumber
      \\
            &\leq NR^\vartheta \left( \|\partial_t(\zeta_0\cL \psi)\|_{L_p((S,t_0)\times \bR^d)} + \|(-\Delta)^{\sigma/2}(\zeta_0\cL \psi)\|_{L_p((S,t_0)\times \bR^d)} \right) \nonumber
      \\
        &\leq N R^\vartheta \|\cL[\chi(1-\eta) f]\|_{L_p\left((S,t_0)\times B_{R}\right)} \nonumber
          \\
        &\quad+ N R^\vartheta \|(\cL v)\partial_t\chi\|_{L_p\left((S,t_0)\times B_{R}\right)} + N R^{-(d+\sigma)/p} \|\cL\psi\|_{L_p\left((S,t_0)\times B_{R}\right)}\nonumber
    \\
    &\quad + NR^{-\sigma/p}\sum_{k=0}^\infty  2^{-k\sigma} \left(\int_{S}^{t_0} \left|\cT^R_{p,2^{-k-4}}(\cL \psi) (t,0) \right|^p dt\right)^{1/p},
  \end{align}
  where $\vartheta:=\sigma-(d+\sigma)/p$.
  Here, as in \eqref{eq10140009}, \eqref{eq10120058} is applied for a general time interval $(S,t_0)$.
  Since $|\partial_t\chi|\leq NR^{-\sigma}$, $\text{supp}(\chi)\subset (t_0-R^\sigma,t_0+R^\sigma)$, and $v=u-w$,
  \begin{align} \label{eq10151506}
       &R^\vartheta \|(\cL v)\partial_t\chi\|_{L_p\left((S,t_0)\times B_{R}\right)} + R^{-(d+\sigma)/p} \|\cL\psi\|_{L_p\left((S,t_0)\times B_{R}\right)} \nonumber
       \\
       &\leq N (|\cL v|^{p})_{Q_{R}(t_0,0)}^{1/p} \leq N \left( (|\cL u|^{p})_{Q_{R}(t_0,0)}^{1/p} +  (|\cL w|^{p})_{Q_{R}(t_0,0)}^{1/p} \right) \nonumber
       \\
       &\leq N \left( (|\cL u|^{p})_{Q_{R}(t_0,0)}^{1/p} +  (|f|^{p})_{Q_{2R}(t_0,0)}^{1/p} \right).
  \end{align}
  Here, the last inequality follows from \eqref{eq10140009}.
  For the last term of \eqref{eq10151503}, by using $\text{supp}(\chi)\subset (t_0-R^\sigma,t_0+R^\sigma)$ and $v=u-w$,
  \begin{align} \label{eq10151504}
      &\int_{t_0-R^\sigma}^{t_0} |\cT^R_{p,2^{-k-4}}(\cL \psi)(t,0)|^p dt \leq N \int_{t_0-R^\sigma}^{t_0} |\cT^R_{p,2^{-k-4}}(\cL v) (t,0)|^p dt \nonumber
      \\
      &\leq N \int_{t_0-R^\sigma}^{t_0} |\cT^R_{p,2^{-k-4}}(\cL u) (t,0)|^p dt + N \int_{t_0-R^\sigma}^{t_0} |\cT^R_{p,2^{-k-4}}(\cL w) (t,0)|^p dt.
  \end{align}
Due to \eqref{eq10181920}, \eqref{equiv}, and \eqref{eq10140009},
\begin{align} \label{eq10192338}
    &\int_{t_0-R^\sigma}^{t_0} |\cT^R_{p,2^{-k-4}}(\cL w)|^p (t,0) dt \nonumber
    \\
    &\leq N \int_{t_0-R^\sigma}^{t_0} \left| 2^{-k\sigma} R^{\sigma-d/p} \|\cL w(t,\cdot)\|_{L_p(\bR^d)}^p \int_{B_{2^{-k-4} R}^c} \nu_t(dy) \right|^{p} dt \nonumber
    \\
    &\leq N R^{-d} \|\cL w\|_{L_p((t_0-R^\sigma,t_0)\times \bR^d)}^p \nonumber
    \\
    &\leq NR^{-d} \|f\|_{L_{p}(Q_{2R}(t_0,0))}^p = N R^\sigma (|f|^{p})_{Q_{2R}(t_0,0)}.
\end{align}
Hence, to conclude \eqref{eq101032342}, it remains to estimate $\|\cL[\chi(1-\eta) f]\|_{L_p\left((S,t_0)\times B_{R}\right)}$. Let $\widetilde{\nu}_t$ be the corresponding L\'evy measure of $\cL$. Since $1-\eta=0$ in $Q_{3R/2}(t_0,0)$, for $x\in B_R$,
  \begin{align*}
      \left|\cL[\chi(1-\eta) f](t,x)\right| &= \chi(t) \left|\int_{|x+y|>3R/2} (1-\eta)(t,x+y)f(t,x+y) \widetilde{\nu}_t(dy)\right|
      \\
      &\leq \chi(t) \int_{|y|>R/2} |f(t,x+y)| \widetilde{\nu}_t(dy).
  \end{align*}
Thus, by the Minkowski inequality, H\"older's inequality, and \eqref{upper_mu},
\begin{align*}
    \|\cL[\chi(1-\eta) f](t)\|_{L_p\left(B_R\right)} \leq N R^{-\sigma+d/p} \chi(t) (\widetilde{\cT}_{p,2^{-1}}^Rf)(t,0),
\end{align*}
which easily yields that
\begin{align} \label{eq10151505}
    R^\vartheta \|\cL[\chi(1-\eta) f]\|_{L_p\left((S,t_0)\times B_{R}\right)} \leq N \left(\aint_{t_0-R^\sigma}^{t_0} \left|\widetilde{\cT}_{p,2^{-1}}^R f(t,0) \right|^p dt\right)^{1/p}.
\end{align}
Combining \eqref{eq10151503}, \eqref{eq10151506}, \eqref{eq10151504}, \eqref{eq10192338}, and \eqref{eq10151505}, we obtain \eqref{eq101032342}.

Finally, for \eqref{eq101032343}, one just needs to repeat the above argument. Here, we only emphasize that $\lambda \psi$ satisfies
\begin{equation*}
    \partial_t (\lambda\psi) = L_t(\lambda\psi) -\lambda (\lambda\psi) + \lambda[\chi(1-\eta) f] + \lambda v\partial_t\chi \text{ in } \bR_T^d,
\end{equation*}
and $\lambda[\chi(1-\eta) f]=0$ in $Q_{3R/2}(t_0,0)$.
The lemma is proved.  
\end{proof}

\begin{remark}
In the above proof, \eqref{eq10151503} is derived under the choice  $\varepsilon=2^{-2\sigma}$ for simplicity. The argument, however, remains valid for any $\varepsilon\in (0,2^{-\sigma})$, and this does not affect the validity of the proof of our main theorem. In particular, instead of \eqref{eq101032342}, we arrive at
\begin{align*}
     &(|\cL v|^{p_1})_{Q_{R/2}(t_0,x_0)}^{1/p_1} \nonumber
      \\
      &\leq N_\varepsilon (|f|^{p})_{Q_{2R}(t_0,x_0)}^{1/p} + N_\varepsilon \left(\aint_{t_0-R^\sigma}^{t_0} \left|\widetilde{\cT}_{p,2^{-1}}^Rf(t,x_0)\right|^{p} dt\right)^{1/p} + N (|\cL u|^{p})_{Q_{R}(t_0,x_0)}^{1/p} \nonumber
      \\
      &\quad+ N\sum_{k=0}^\infty \varepsilon^{k} 2^{k\sigma} \left(\aint_{t_0-R^\sigma}^{t_0} \left|\cT^R_{p,2^{-k-4}} (\cL u)(t,x_0)\right|^p dt\right)^{1/p}.
\end{align*}
\end{remark}

Let $p\in(1,\infty)$ and $p_1\in(p,\infty]$ be taken from Lemma \ref{lem10151525}. For $\gamma>0$ and $\rho>0$, denote
\begin{equation} \label{eq10161704}
    \cA(\rho) := \{(t,x)\in (-\infty,T)\times \bR^d: |\cL u(t,x)|> \rho\},
\end{equation}
\begin{equation*}
    \cA'(\rho) := \{(t,x)\in (-\infty,T)\times \bR^d: |\lambda u(t,x)|> \rho\},
\end{equation*}
\begin{align} \label{eq10161705}
    \cB_\gamma(\rho) :=&\{(t,x)\in (-\infty,T)\times \bR^d: \gamma^{-1/p}(\bM |f|^p(t,x))^{1/p} \nonumber
    \\
    &+ \gamma^{-1/p}(\bM_t \widetilde{\bT}_{2^{-1}}|f|^p(t,x))^{1/p} + \gamma^{-1/p_1}(\bM |\cL u|^p(t,x))^{1/p} \nonumber
    \\
    &+ \gamma^{-1/p_1}  \sum_{k=0}^\infty 2^{-k\sigma} \left(\bM_t \bT_{2^{-k-4}}(|\cL u|^p)(t,x)\right)^{1/p} > \rho\},
\end{align}
and
\begin{align*}
    \cB_\gamma'(\rho) :=  &\{(t,x)\in (-\infty,T)\times \bR^d: \gamma^{-1/p}(\bM |f|^p(t,x))^{1/p}
    \\
    &+ \gamma^{-1/p}(\bM_t \widetilde{\bT}_{2^{-1}}|f|^p(t,x))^{1/p} + \gamma^{-1/p_1}(\bM |\lambda u|^p(t,x))^{1/p} 
    \\
    &+ \gamma^{-1/p_1} \sum_{k=0}^\infty  2^{-k\sigma} \left(\bM_t \bT_{2^{-k-4}}(|\lambda u|^p)(t,x)\right)^{1/p} > \rho\}.
\end{align*}
In addition, we write
\begin{equation} \label{eq10152356}
    \cC_R(t,x) := (t-R^\sigma, t+R^\sigma)\times B_R(x), \quad \widehat{\cC_R}(t,x) := \cC_R(t,x)\cap \{t< T\}.
\end{equation}

\begin{lemma} \label{lem10152345}
    Let $\gamma\in(0,1)$, $p\in(1,\infty), \sigma\in(0,2), T\in(0,\infty), \lambda\geq0$, and $R>0$. 
  Suppose that $L_t$ and $\cL_t$ satisfy Assumptions \ref{levy} and \ref{upper}, respectively, and that $\cL_t \equiv \cL$ is independent of $t$. Assume that Theorem \ref{mainthm} holds for this $p$. Let $u\in \bH_{p,0}^{\sigma}(T)$ satisfy
  \begin{equation} \label{eq10151550}
    \partial_tu = L_tu -\lambda u + f \text{ in } \bR_T^d.
  \end{equation}
  Then there exists a sufficiently large constant $\kappa=\kappa(d,\Lambda,p,\sigma,N_0)>1$ so that for any $(t_0,x_0)\in \bR^d_T$ and $s>0$, if
  \begin{equation} \label{eq10152138}
      |\cC_{\widetilde{R}}(t_0,x_0)\cap \cA(\kappa \rho)| \geq \gamma |\cC_{\widetilde{R}}(t_0,x_0)|,
  \end{equation}
  then
  \begin{equation*}
      \widehat{\cC_{\widetilde{R}}}(t_0,x_0) \subset \cB_\gamma(\rho),
  \end{equation*}
      where $\widetilde{R}:=2^{-1-1/\sigma}R$. Moreover, the claim still holds if we replace $\cA(\kappa \rho)$ and $\cB(\rho)$ with $\cA'(\kappa \rho)$ and $\cB'(\rho)$, respectively.
\end{lemma}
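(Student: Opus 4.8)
The plan is to prove this ``crawling of ink spots'' compatibility statement by the standard contrapositive argument: assuming that some point $(t_1,x_1)\in\widehat{\cC_{\widetilde R}}(t_0,x_0)$ fails to lie in $\cB_\gamma(\rho)$, we will show that the density estimate \eqref{eq10152138} cannot hold, provided $\kappa$ is chosen large enough. So first I would fix $(t_1,x_1)\notin\cB_\gamma(\rho)$, which by definition of $\cB_\gamma(\rho)$ in \eqref{eq10161705} gives simultaneous upper bounds at $(t_1,x_1)$ for the four maximal-type quantities: $\bM|f|^p$, $\bM_t\widetilde\bT_{2^{-1}}|f|^p$, $\bM|\cL u|^p$, and the tail sum $\sum_k 2^{-k\sigma}\bM_t\bT_{2^{-k-4}}(|\cL u|^p)$, each controlled by $\gamma^{1/p}\rho^p$ (resp. $\gamma^{1/p_1}\rho^p$ for the two involving $\cL u$). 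Since $(t_0,x_0)$ and $(t_1,x_1)$ are within parabolic distance $\lesssim\widetilde R$, every cylinder $Q_{cR}(t_0,x_0)$ appearing below is comparable to one containing $(t_1,x_1)$, so all these maximal-function bounds transfer (up to dimensional constants) to averages over the relevant $Q$'s centered near $(t_0,x_0)$.

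Next I would invoke Lemma \ref{lem10151525} at the point $(t_0,x_0)$ with radius $R$ (note $\widetilde R\approx R$ up to an absolute constant), obtaining the splitting $u=v+w$ on the cylinder with $v,w\in\bH_{p,0}^\sigma(S,t_0)$. From \eqref{eq101032344} and the pointwise maximal-function bounds just recorded, $(|\cL w|^p)_{Q_R(t_0,x_0)}^{1/p}\le N(|f|^p)_{Q_{2R}}^{1/p}\le N\gamma^{1/p}\rho$. From \eqref{eq101032342}, the four terms on the right-hand side are each controlled: $(|f|^p)_{Q_{2R}}^{1/p}$ and the $f$-tail term by $\gamma^{1/p}\rho$; $(|\cL u|^p)_{Q_R}^{1/p}$ by $\gamma^{1/p_1}\rho$; and the $(\cL u)$-tail sum by $\gamma^{1/p_1}\rho$ (here is precisely where the weight $2^{-k\sigma}$ makes the series converge and where replacing $\cT^R_{p,2^{-k-4}}(\cL u)(t,0)$-type quantities by $\bT_{2^{-k-4}}(|\cL u|^p)$ maximal functions is used — I would spell out that $|\cT^R_{p,\kappa'}g(t,0)|^p\le N\kappa'^{?}\bT_{\kappa'}(|g|^p)(t,0)$ up to the right powers of $R$, which is the routine bookkeeping connecting the two notations). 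Hence $(|\cL v|^{p_1})_{Q_{R/2}(t_0,x_0)}^{1/p_1}\le N\gamma^{1/p}\rho+N\gamma^{1/p_1}\rho\le N\gamma^{1/p_1}\rho$ since $\gamma<1$ and $p_1>p$.

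Now I would run the standard measure-theoretic step. On $Q_{R/2}(t_0,x_0)$ write $|\cL u|\le|\cL v|+|\cL w|$. By Chebyshev and the $L^{p_1}$ bound on $\cL v$,
\[
|\{|\cL v|>\kappa\rho/2\}\cap Q_{R/2}| \le \frac{(|\cL v|^{p_1})_{Q_{R/2}}\,|Q_{R/2}|}{(\kappa\rho/2)^{p_1}} \le \frac{N\gamma^{p_1/p}\rho^{p_1}\,|Q_{R/2}|}{(\kappa\rho/2)^{p_1}} = N\left(\frac{\gamma^{1/p}}{\kappa}\right)^{p_1}|Q_{R/2}|,
\]
and by Chebyshev with the $L^p$ bound on $\cL w$,
\[
|\{|\cL w|>\kappa\rho/2\}\cap Q_{R/2}| \le \frac{(|\cL w|^p)_{Q_R}\,|Q_R|}{(\kappa\rho/2)^p} \le N\left(\frac{\gamma^{1/p}}{\kappa}\right)^{p}|Q_{R/2}|.
\]
Adding, using $|Q_{R/2}|\approx|\cC_{\widetilde R}(t_0,x_0)|$ up to an absolute constant, and choosing $\kappa=\kappa(d,\Lambda,p,\sigma,N_0)$ large enough that $N\big((\gamma^{1/p}/\kappa)^{p_1}+(\gamma^{1/p}/\kappa)^p\big)<\gamma$ — which is possible because $\gamma<1$ so $\gamma^{p_1/p}\le\gamma$ and $\gamma^{1}\le\gamma$, hence it suffices to take $\kappa$ with $N(\kappa^{-p_1}+\kappa^{-p})<1$, a threshold depending only on $N$ and $p,p_1$, i.e. on $d,\Lambda,p,\sigma,N_0$ — we get $|\{|\cL u|>\kappa\rho\}\cap\cC_{\widetilde R}(t_0,x_0)|<\gamma|\cC_{\widetilde R}(t_0,x_0)|$, contradicting \eqref{eq10152138}. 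The case of $\cA'$, $\cB'_\gamma$ is identical, using \eqref{eq101032343} in place of \eqref{eq101032342} and the $\lambda w$ part of \eqref{eq101032344}.

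The main obstacle I anticipate is purely notational rather than conceptual: carefully matching the tail operators $\cT^R_{p,2^{-k-4}}(\cL u)(t,x_0)$ and $\widetilde\cT^R_{p,2^{-1}}f(t,x_0)$ from Lemma \ref{lem10151525} against the maximal operators $\bM_t\bT_{2^{-k-4}}(|\cL u|^p)$ and $\bM_t\widetilde\bT_{2^{-1}}|f|^p$ in the definition of $\cB_\gamma(\rho)$, keeping track of the powers of $R$ and of $2^{-k}$ so that the series $\sum_k 2^{-k\sigma}(\cdots)^{1/p}$ is the same object on both sides and still converges after the transfer. Once that dictionary is fixed, everything reduces to Chebyshev plus the gain $p_1>p$ and the freedom to enlarge $\kappa$.
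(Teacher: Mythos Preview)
Your approach is exactly the paper's: argue by contradiction, decompose $u=v+w$ via Lemma~\ref{lem10151525}, then use Chebyshev on the $L^{p_1}$ bound for $\cL v$ and the $L^p$ bound for $\cL w$ to contradict \eqref{eq10152138} once $\kappa$ is large. The bookkeeping linking $\cT^R_{p,\kappa'}$ to $\bT_{\kappa'}$ is precisely \eqref{eq11221518}, so that part is fine.

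There is, however, one genuine gap in the time direction. You apply Lemma~\ref{lem10151525} at $(t_0,x_0)$, which produces $v,w\in\bH^\sigma_{p,0}(S,t_0)$ and estimates on the \emph{one-sided} cylinder $Q_{R/2}(t_0,x_0)=(t_0-(R/2)^\sigma,t_0)\times B_{R/2}(x_0)$. But $\cC_{\widetilde R}(t_0,x_0)$ is \emph{two-sided}, extending up to time $t_0+\widetilde R^\sigma$; hence $\cC_{\widetilde R}(t_0,x_0)\cap\cA(\kappa\rho)$ is not contained in $Q_{R/2}(t_0,x_0)$, and your Chebyshev step cannot bound the portion of the level set living in $(t_0,\min\{t_0+\widetilde R^\sigma,T\})\times B_{\widetilde R}(x_0)$. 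The same issue prevents the contradicting point $(t_1,x_1)$ from lying in the cylinders $Q_{2R}(t_0,x_0)$, $Q_R(t_0,x_0)$ over which you need to bound the averages via $\bM$.

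The fix, which the paper carries out, is to apply Lemma~\ref{lem10151525} at the shifted time $t_1:=\min\{t_0+\widetilde R^\sigma,T\}$ rather than at $t_0$. With $\widetilde R=2^{-1-1/\sigma}R$ one checks $\widehat{\cC_{\widetilde R}}(t_0,x_0)\subset Q_{R/2}(t_1,x_0)\subset Q_{2R}(t_1,x_0)$, so the contradicting point lies in all relevant cylinders and the level-set estimate covers the full $\cC_{\widetilde R}(t_0,x_0)\cap\cA(\kappa\rho)$. After this shift your argument goes through verbatim.
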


\begin{proof}
We only consider $\cA(\kappa \rho)$ and $\cB(\rho)$ since the proof for $\cA'(\kappa \rho)$ and $\cB'(\rho)$ is similar.
    Moreover, it suffices to assume that $\rho=1$ since we can divide \eqref{eq10151550} by $\rho$.

    We will use a proof by contradiction. Suppose that there is $(s,y)\in \widehat{\cC_{\widetilde{R}}}(t_0,x_0)$ such that $(s,y)\notin \cB_\gamma(1)$, i.e., 
    \begin{align*}
    &\gamma^{-1/p}(\bM |f|^p(s,y))^{1/p} + \gamma^{-1/p}(\bM_t \widetilde{\bT}_{2^{-1}}|f|^p(s,y))^{1/p} + \gamma^{-1/p_1}(\bM |\cL u|^p(s,y))^{1/p} 
    \\
    &\quad+ \gamma^{-1/p_1} \sum_{k=0}^\infty 2^{-k\sigma} (\bM_t \bT_{2^{-k-4}}(|\cL u|^p)(s,y))^{1/p} \leq 1.
    \end{align*}
    Let $t_1:=\min\{t_0 + \widetilde{R}^\sigma,T\}$. Then by Lemma \ref{lem10151525} with $t_1$ instead of $t_0$, one can find $v,w\in\bH_{p,0}^\sigma(t_1-R^\sigma,t_1)$ satisfying 
      \begin{align*}
      (|\cL w|^p)_{Q_{R}(t_1,x_0)}^{1/p} \leq N(|f|^p)_{Q_{2R}(t_1,x_0)}^{1/p},
  \end{align*}
  and
  \begin{align*}
      (|\cL v|^{p_1})_{Q_{R/2}(t_1,x_0)}^{1/p_1}  &\leq N(|f|^p)_{Q_{2R}(t_1,x_0)}^{1/p} + N\left(\aint_{t_1-R^\sigma}^{t_1} \left|\widetilde{T}^R_{p,2^{-1}}f(t,x_0)\right|^p dt \right)^{1/p}
      \\
      &\quad+ N(|\cL u|^p)_{Q_{R}(t_1,x_0)}^{1/p} \nonumber
      \\
      &\quad+ N\sum_{k=0}^\infty 2^{-k\sigma} \left(\aint_{t_1-R^\sigma}^{t_1} \left|T^R_{2^{-k-4}}(\cL u) (t,x_0) \right|^p dt\right)^{1/p}.
  \end{align*}
  This, \eqref{eq11221518}, and the set inclusions
  \begin{align*}
      (s,y)\in \widehat{\cC_{\widetilde{R}}}(t_0,x_0) \subset Q_{R/2}(t_1,x_0) \subset Q_{2R}(t_1,x_0),
  \end{align*}
yield that
  \begin{equation*}
      (|\cL v|^{p_1})_{Q_{R/2}(t_1,x_0)}^{1/{p_1}} \leq N\gamma^{1/p_1} ,\quad (|\cL w|^p)_{Q_{R}(t_1,x_0)}^{1/p} \leq N\gamma^{1/p},
  \end{equation*}
  where $N=N(d,p,\Lambda,\sigma,N_0)$. Let $C_1\in(0,\kappa)$ be a constant, which will be determined later. By the Chebyshev inequality, 
  \begin{align*}
      |\cC_{\widetilde{R}}(t_0,x_0)\cap \cA(\kappa)| &\leq |\{(t,x)\in Q_{R/2}(t_1,x_0): |\cL u(t,x)|>\kappa\}|
      \\
      &\leq |\{(t,x)\in Q_{R/2}(t_1,x_0): |\cL w(t,x)|>\kappa-C_1\}|
      \\
      &\quad+ |\{(t,x)\in Q_{R/2}(t_1,x_0): |\cL v(t,x)|>C_1\}|
      \\
      &\leq \int_{Q_{R/2}(t_1,x_0)} (\kappa-C_1)^{-p}|\cL w|^p + C_1^{-p_1}|\cL v|^{p_1} dxdt
      \\
      &\leq N\left( \frac{\gamma|Q_{R/2}|}{(\kappa-C_1)^p} + 1_{p_1<\infty} \frac{\gamma|Q_{R/2}|}{C_1^{p_1}} \right)
      \\
      &\leq N\gamma|\cC_{\widetilde{R}}(t_0,x_0)|\left( (\kappa-C_1)^{-p} + 1_{p_1<\infty} C_1^{-p_1}\right).
  \end{align*}
  Thus, by choosing sufficiently large $C_1$ and $\kappa$ in order, we get
  \begin{equation*}
       |\cC_{\widetilde{R}}(t_0,x_0)\cap \cA(\kappa)| < \gamma |\cC_{\widetilde{R}}(t_0,x_0)|,
  \end{equation*}
  which contradicts \eqref{eq10152138}.
  The lemma is proved.
\end{proof}

\subsection{Proof of Theorem \ref{mainthm}}

\begin{proof}[Proof of Theorem \ref{mainthm}]

\textbf{1.} In \textbf{1} and \textbf{2}, we prove the theorem under the assumption that the continuity of $L_t$ is already established. In the present step, we prove $(i)$ and $(iii)$ assuming for the moment that \eqref{mainest} is derived. 
 For $v\in H_p^\sigma(\bR^d)$, we define $u(t,x):=\eta(t/T)v(x)$, where $\eta\in C_c^\infty([0,\infty))$ is a nonzero function so that $\eta(0)=0$. Our aim is to prove \eqref{oper_bdd} for a fixed $t=t_0>0$. By applying \eqref{mainest} with $\lambda=0$ and time-independent operators $\cL_{t_0}$ and $L_{t_0}$,
\begin{align*}
   \|\eta(\cdot/T)\|_{L_p((0,T))} \|\cL_{t_0} v\|_{L_p(\bR^d)} &= \|\cL_{t_0} u\|_{L_p(\bR^d_T)} 
   \\
   &\leq N \left(\|L_{t_0} u\|_{L_p(\bR^d_T)} +  T^{-1}\|(\partial_t\eta)(\cdot/T) v\|_{L_p(\bR^d_T)} \right)
    \\
    &= N \|\eta(\cdot/T)\|_{L_p((0,T))}\|L_{t_0} v\|_{L_p(\bR^d)} 
    \\
    &\quad + N T^{-1}\|(\partial_t\eta)(\cdot/T)\|_{L_p((0,T))}\|v\|_{L_p(\bR^d)},
\end{align*}
where $N$ is independent of $T$. 
Dividing both sides by $ \|\eta(\cdot/T)\|_{L_p((0,T))}$ and letting $T\to\infty$, we obtain the continuity of $\cL_{t_0}$ as well as \eqref{oper_bdd}.

Next, we prove $(iii)$ under the assumption that \eqref{mainest} holds, in particular, when $L_t=-(-\Delta)^{\sigma/2}$. 
Since
\begin{equation*}
    \|u\|_{H_{p}^\sigma(\bR^d)} \approx \|u\|_{L_{p}(\bR^d)} + \|(-\Delta)^{\sigma/2}u\|_{L_{p}(\bR^d)}
\end{equation*}
(see e.g. \cite[Lemma 3.4]{DJK23}), combining \eqref{mainest} with $\cL_t=-(-\Delta)^{\sigma/2}$ and \eqref{eq110722228} leads to
\begin{equation} \label{eq10152309}
    \|u\|_{\bH_{p}^\sigma(T)} \leq N (1+\min\{T,\lambda^{-1}\}) \|f\|_{L_{p}(\bR_T^d)},
\end{equation}
where $N$ is independent of $T$ and $\lambda$. The uniqueness easily follows by considering $f=0$ in \eqref{eq10152309}. For the existence, note that the case $L_t = -(-\Delta)^{\sigma/2}$ was already proved (see e.g. \cite[Theorem 1]{MP19}). This fact, together with \eqref{eq10152309} and the method of continuity, yields the desired result of $(iii)$.

\textbf{2.} Now we focus on $(ii)$. Due to the continuity of $L_t$ and the definition of $\bH_{p,0}^\sigma(T)$, we just need to prove \eqref{mainest} when $u\in C_c^\infty([0,T]\times\bR^d)$ with $u(0,x)=0$.
Let us deal with the case when $p\in[2,\infty)$. We first proceed with the proof under the additional assumption that $\cL_t\equiv \cL$ is independent of $t$. As in the proof of \cite[Theorem 2.6]{DL23}, we use an iterative argument to successively
increase the desired exponent $p$. First, the case when $p=2$ is proved in Proposition \ref{prop2}. Next, assume that the theorem holds for some $p_0\in[2,\infty)$. Take $p_1=p_1(d,\sigma,p_0)$ from Lemma \ref{lem10151525}. Our goal is to show \eqref{mainest} for $p\in(p_0,p_1)$. By Lemmas \ref{lem10152345} and \ref{lem10161553}, there is $\kappa=\kappa(d,p_0,\Lambda,\sigma,N_0)$ so that
\begin{equation*}
    |\cA(\kappa \rho)| \leq N(d)\gamma |\cB_\gamma(\rho)|,
\end{equation*}
where $\cA(\rho)$ and $\cB_\gamma(\rho)$ are defined in the same way as in \eqref{eq10161704} and \eqref{eq10161705}, except that $p$ is replaced by $p_0$. Thus,
\begin{align} \label{eq10161724}
    \|\cL u\|_{L_p(\bR^d_T)}^p = p\kappa^p \int_0^\infty |\cA(\kappa s)| s^{p-1} ds \leq N(d,p)\gamma \kappa^p \int_0^\infty |\cB_\gamma(s)| s^{p-1} ds.
\end{align}
Let us estimate the terms appearing in the definition of $\cB_\gamma(s)$. First, by the Hardy–Littlewood theorem,
\begin{align} \label{eq10161716}
    &\|\gamma^{-1/p_0}(\bM|f|^{p_0})^{1/p_0}\|_{L_p(\bR^d_T)} + \|\gamma^{-1/p_1}(\bM|
    \cL u|^{p_0})^{1/p_0}\|_{L_p(\bR^d_T)} \nonumber
    \\
    &\leq N \gamma^{-1/p_0}\|f\|_{L_p(\bR^d_T)} +
   N\gamma^{-1/p_1} \|\cL u\|_{L_p(\bR^d_T)}.
\end{align}
By applying the Hardy–Littlewood theorem only in the time direction and Lemma \ref{lem_TLP} in the spatial direction,
\begin{equation} \label{eq10161717}
    \|\gamma^{-1/p_0}(\bM_t \widetilde{\bT}_{2^{-1}} |f|^{p_0})^{1/p_0}\|_{L_p(\bR^d_T)} \leq N \gamma^{-1/p_0}\|f\|_{L_p(\bR^d_T)}
\end{equation}
and
\begin{align} \label{eq10161718}
   & \sum_{k=0}^\infty 2^{-k\sigma}\left\|\gamma^{-1/p_1} \left(\bM_t \bT_{2^{-k-4}}(|\cL u|^{p_0})(t,x)\right)^{1/p_0} \right\|_{L_p(\bR^d_T)} \nonumber
   \\
   &\leq N\gamma^{-1/p_1} \sum_{k=0}^\infty 2^{-k\sigma}  \| \cL u \|_{L_p(\bR^d_T)} = N\gamma^{-1/p_1} \| \cL u \|_{L_p(\bR^d_T)}.
\end{align}
Combining \eqref{eq10161716}, \eqref{eq10161717}, and \eqref{eq10161718}, we can estimate the last term of \eqref{eq10161724} to obtain
\begin{align*}
    \|\cL u\|_{L_p(\bR^d_T)}^p \leq N\gamma^{1-p/p_0}\|f\|_{L_p(\bR^d_T)}^p + N\gamma^{1-p/p_1}\| \cL u \|_{L_p(\bR^d_T)}^p.
\end{align*}
Since $p<p_1$ and $u\in C_c^\infty([0,T]\times\bR^d)$, we can choose a sufficiently small $\gamma\in(0,1)$ so that $N\gamma^{1-p/p_1}<1/2$, which allows us to absorb the last term to the left. By repeating the above argument with $\cA'(\kappa \rho)$ and $\cB'(\rho)$ instead of $\cA(\kappa \rho)$ and $\cB(\rho)$, \eqref{mainest} is proved for $p\in (p_0,p_1)$.
Since the lower bound of $p_1-p$ depends only on $d$ and $\sigma$, one can repeat the procedure finite times to reach the desired exponent $p$. Actually, we remark that in finite steps, we get a $p_0$, which is greater than $d/\sigma+1$, so that we can choose $p_1=\infty$. This implies that the process can be repeated a uniformly bounded number of times for any $p\in[2,\infty)$. The claim is obtained when $\cL_t$ is independent of $t$.

Next, we consider general $\cL_t$, which is allowed to depend on $t$.  
By the above claim, we have \eqref{mainest} with $-(-\Delta)^{\sigma/2}$ instead of $\cL_t$.
Since for each $t>0$, $\cL_t$ can be viewed as a time-independent operator, by \eqref{oper_bdd},
\begin{equation*}
     \|\cL_t u\|_{L_{p}(\bR^d)} \leq N \|(-\Delta)^{\sigma/2}u\|_{L_{p}(\bR^d)}.
\end{equation*}
We raise both sides of this inequality to the power of $p$ and then integrate with respect to $t$. Then we get the estimate for general $\cL_t$.

Lastly, for $p\in(1,2)$, we utilize a duality argument. As in the above case where $p\geq2$, we only need to prove the case when $\cL_t\equiv \cL$ is independent of $t$. 
Let $L^*_t$ and $\cL^*$ be the operators whose corresponding L\'evy measures are given by $\nu_{t}(-dy)$ and $\widetilde{\nu}_t(-dy)$, respectively. Then one can find that $L^*_t$ and $\cL^*$ are the dual operators of $L_t$ and $\cL$, respectively.
For $p'=p/(p-1)\in(2,\infty)$ and $g\in L_{p'}(\bR^d_T)$, one can find a solution $v\in \bH_{p',0}^\sigma(T)$ to
  \begin{equation*}
    \partial_tv = L^*_{T-t} v -\lambda v + g(T-t,x) \text{ in } \bR_T^d.
  \end{equation*}
  Moreover,
  \begin{equation*}
      \|\partial_t v\|_{L_{p'}(\bR_T^d)} + \|\cL^* v\|_{L_{p'}(\bR_T^d)} + \lambda\|v\|_{L_{p'}(\bR_T^d)} \leq N\|g\|_{L_{p'}(\bR_T^d)},
  \end{equation*}
  Due to the duality property of nonlocal operators, we can apply an integration by parts to get
  \begin{align*}
      &\int_0^T \int_{\bR^d} g(t,x) (\cL u)(t,x) dxdt = \int_0^T \int_{\bR^d} g(T-t,x)(\cL u)(T-t,x) dxdt
      \\
      &= \int_0^T \int_{\bR^d} \left( (\partial_t v)(t,x) - (L^*_{T-t}v)(t,x) +\lambda v(t,x) \right) (\cL u)(T-t,x) dxdt
      \\
      &= \int_0^T \int_{\bR^d} (\cL^* v)(t,x) \left( (\partial_t u)(T-t,x) -(L_{T-t}u)(T-t,x) +\lambda u(T-t,x) \right) dxdt
      \\
      &\leq \|f\|_{L_p(\bR^d_T)}\|\cL^*v\|_{L_p(\bR^d_T)} \leq N\|f\|_{L_p(\bR^d_T)}\|g\|_{L_p(\bR^d_T)}.
  \end{align*}
  For the second inequality above, we used the time-independency of $\cL$ as well as the zero initial conditions $u(0,x)=v(0,x)=0$ to justify
  \begin{equation*}
      \int_0^T \int_{\bR^d} (\partial_t v)(t,x) (\cL u)(T-t,x) dxdt = \int_0^T \int_{\bR^d} (\cL^* v)(t,x) (\partial_tu)(T-t,x) dxdt.
  \end{equation*}
  Since $g\in L_{p'}(\bR^d_T)$ is arbitrary, we see that
  \begin{equation*}
      \|\cL u\|_{L_p(\bR^d_T)} \leq N \|f\|_{L_p(\bR^d_T)}.
  \end{equation*}
  Similarly, by repeating the above argument with $\lambda u$ instead of $\cL u$, $\lambda u$ can also be estimated. Lastly, for $\partial_t u$, one just needs to use \eqref{maineq}. Thus, we arrive at \eqref{mainest} when $\cL$ is independent of $t$.

\textbf{3.} Lastly, we aim to prove the continuity of $L_t$. Note that by \cite[Theorem 1]{MP19}, $-(-\Delta)^{\sigma/2}$ is continuous. Thus, it follows from the result in \textbf{2} that we have \eqref{mainest} with $(L_t,-(-\Delta)^{\sigma/2})$ instead of $(\cL_t,L_t)$. This actually proves the desired result.  
  The theorem is proved.     
\end{proof}

\section{A special case in weighted spaces} \label{sec4}

In this section, we prove Theorem \ref{thm_weight}. Thus, throughout this section, we fix $d=1$ and restrict $\sigma$ to the range $(1,2)$. 

Let $p_0\in(1,\infty)$ and $u\in \bH_{p_0,0}^\sigma(T)$ be a solution to \eqref{weighteq}. Similar to the proof of Lemma \ref{lem10151525}, for any $t_0\in(0,T)$ and $R>0$, we decompose $u=v+w$ so that $v,w\in \bH_{p,0}^\sigma(S,t_0)$ with $S=\min\{0,t_0-R^\sigma\}$ satisfy
\begin{equation} \label{eq1129114}
    \partial_t w = L_tw-\lambda w + f \quad \text{ in } (t_0-R^\sigma,t_0)\times \bR,
\end{equation}
and
\begin{equation} \label{eq11222305}
    \partial_t v = L_tv-\lambda v \quad\text{ in } (t_0-R^\sigma,t_0)\times \bR.
\end{equation}
Here, $f$ is extended to be zero for $t<0$.

In the following lemma, we estimate $v$.

\begin{lemma} \label{lem12051053}
    Let $p_0\in(1,\infty)$, $t_0\in (0,\infty)$, $x_0\in\bR$, $R>0$, $\lambda\geq0$, and $S:=\min\{0,t_0-R^\sigma\}$. Suppose that $L_t$ and $\cL_t$ satisfy Assumptions \ref{levy} and \ref{upper}, respectively, and that $\cL_t\equiv\cL$ is independent of $t$. Let $v\in \bH_{p_0,0}^{\sigma}(S,t_0)$ satisfy \eqref{eq11222305}. Then the followings hold.

    (i) For any $p\in(p_0,\infty)$,
    \begin{align} \label{eq11222351}
        (|\cL v|^p)^{1/p}_{Q_{R/2}(t_0,x_0)} \leq N \sum_{z\in\bZ} (1+|z|)^{-\sigma} (|\cL v|^{p_0})^{1/p_0}_{(t_0-R^\sigma,t_0)\times B_{R/2}(x_0+zR)}
    \end{align}
    and
        \begin{align} \label{eq11222352}
        (|\lambda v|^p)^{1/p}_{Q_{R/2}(t_0,x_0)} \leq N \sum_{z\in\bZ} (1+|z|)^{-\sigma} (|\lambda v|^{p_0})^{1/p_0}_{(t_0-R^\sigma,t_0)\times B_{R/2}(x_0+zR)},
    \end{align}
    where $N=N(p_0,p,\sigma,\Lambda,N_0)$.

    (ii) For any $\vartheta\in(0,\sigma)$ so that
        \begin{align} \label{eq11222353}
        [\cL v]_{C^{\vartheta/\sigma,\vartheta}(Q_{R/2}(t_0,x_0))} \leq N R^{-\vartheta}\sum_{z\in\bZ} (1+|z|)^{-\sigma} (|\cL v|^{p_0})^{1/p_0}_{(t_0-R^\sigma,t_0)\times B_{R/2}(x_0+zR)}
    \end{align}
    and
        \begin{align} \label{eq11222354}
        [\lambda v]_{C^{\vartheta/\sigma,\vartheta}(Q_{R/2}(t_0,x_0))} \leq N R^{-\vartheta}\sum_{z\in\bZ} (1+|z|)^{-\sigma} (|\lambda v|^{p_0})^{1/p_0}_{(t_0-R^\sigma,t_0)\times B_{R/2}(x_0+zR)},
    \end{align}
    where $N=N(p_0,\vartheta,\sigma,\Lambda,N_0)$.
\end{lemma}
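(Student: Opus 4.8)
The plan is to reduce everything to a local $L_\infty$/H\"older bound, with nonlocal tails, for solutions of the \emph{homogeneous} equation. Since $\cL$ is a Fourier multiplier in $x$ that does not depend on $t$, it commutes with $\partial_t$ and with $L_t$; applying $\cL$ to \eqref{eq11222305} (after mollifying in $x$ to justify the manipulation, as in the proof of Lemma \ref{lem10151525}) shows that $g:=\cL v$ solves $\partial_t g=L_tg-\lambda g$ on $(t_0-R^\sigma,t_0)\times\bR$, and the same is trivially true of $g:=\lambda v$. Such a $g$ lies locally in $\bH^\sigma_{p_0}$ with $L_tg$ locally in $L_{p_0}$, so Lemma \ref{lem10142050} applies to it through Remark \ref{rem1127}. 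It therefore suffices to prove, for any $g\in\bH^\sigma_{p_0,0}(S,t_0)$ solving the homogeneous equation,
\begin{equation*}
    (|g|^{p})^{1/p}_{Q_{R/2}(t_0,x_0)}\le N\sum_{z\in\bZ}(1+|z|)^{-\sigma}\,(|g|^{p_0})^{1/p_0}_{(t_0-R^\sigma,t_0)\times B_{R/2}(x_0+zR)}\quad\text{for all }p\in(p_0,\infty),
\end{equation*}
together with the analogous bound for $R^{\vartheta}[g]_{C^{\vartheta/\sigma,\vartheta}(Q_{R/2}(t_0,x_0))}$.

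First I would prove a localized estimate with tail terms. Using a spatial cutoff $\zeta_0$ centered at $x_0$ at scale a fixed multiple of $R$, together with a time cutoff $\chi$ equal to $1$ on $(t_0-(R/2)^\sigma,t_0)$ and supported in $(t_0-R^\sigma,t_0+R^\sigma)$, apply Lemma \ref{lem10142050} with $f\equiv0$ to $g\chi$ on $(S,t_0)$ and then the Sobolev-type embedding \eqref{eq11072241} (respectively a parabolic Morrey embedding). This gives, for $p_1$ slightly above $p_0$ as in Lemma \ref{lem10151525} (resp.\ for $\vartheta$ up to $\sigma-d/p_0$), a bound of $(|g|^{p_1})^{1/p_1}_{Q_{R/2}(t_0,x_0)}$ (resp.\ of $R^{\vartheta}[g]_{C^{\vartheta/\sigma,\vartheta}(Q_{R/2}(t_0,x_0))}$) by a constant times
\begin{equation*}
    R^{-d/p_0}\|g\|_{L_{p_0}((t_0-R^\sigma,t_0)\times B_{cR}(x_0))}+R^{-d/p_0}\sum_{k\ge0}\varepsilon^{k}2^{k\sigma}\Big(\int_{t_0-R^\sigma}^{t_0}\big|\cT^R_{p_0,2^{-k-4}}g(t,x_0)\big|^{p_0}\,dt\Big)^{1/p_0},
\end{equation*}
with $\varepsilon\in(0,2^{-\sigma})$ fixed; the term $g\,\partial_t\chi$ produced by the time cutoff is harmless because its time support lies away from $t_0$ and it feeds only the first term. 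The key point is then to tame the tail series: split each $B^c_{2^{-k-4}R}$ into dyadic annuli, estimate $\cT^R_{p_0,2^{-k-4}}g(t,x_0)$ by H\"older's inequality in $\nu_t$ using only $\nu_t(B^c_r)\le\Lambda r^{-\sigma}$ from \eqref{equiv} --- as in the estimate of the analogous tail term in the proof of Lemma \ref{lem10151525} --- and cover each annulus $\{|y|\sim2^{j}R\}$ by $O(2^{j})$ balls $B_{R/2}(x_0+zR)$ with $|z|\sim2^{j}$. This bounds the series by $NR^{d/p_0}\sum_{z\in\bZ}(1+|z|)^{-\sigma}\|g\|_{L_{p_0}((t_0-R^\sigma,t_0)\times B_{R/2}(x_0+zR))}$, the geometric factors being summable precisely because $\varepsilon<2^{-\sigma}$; after absorbing the first term into the $z=0$ summand, this proves \eqref{eq11222351} and \eqref{eq11222353} with $p$ replaced by $p_1$ and $\vartheta$ up to $\sigma-d/p_0$.

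To reach all $p<\infty$ and all $\vartheta\in(0,\sigma)$ I would iterate, using that the estimate just obtained holds with the \emph{same} constant at every center (its proof does not see $x_0$). Since the gain $p_1-p_0$ has a lower bound $\delta(d,\sigma)>0$ and the equation for $g$ is homogeneous at every scale, one composes the estimate with itself: substituting the $p_1$-bound (at the appropriate centers and scale comparable to $R$) into itself produces a $p_2$-bound whose right-hand side involves the \emph{self-convolution} of the weight $(1+|z|)^{-\sigma}$ against the $L_{p_0}$-averages. Here the hypothesis $\sigma>1$ (together with $d=1$) is essential: the sequence $(1+|z|)^{-\sigma}$ is then summable and its self-convolution again decays like $(1+|m|)^{-\sigma}$, so the composed estimate keeps the form $\sum_m(1+|m|)^{-\sigma}(|g|^{p_0})^{1/p_0}$ over a tiling of the slab, up to a bounded change of constants and of the tiling scale (the number of iterations, hence the scale distortion, being bounded in terms of $d,\sigma$ and the target exponent). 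Iterating finitely many times reaches the limiting exponent $\infty$, which yields every $p<\infty$ on $Q_{R/2}(t_0,x_0)$ for part (i); and once the running exponent exceeds $d/(\sigma-\vartheta)$, the parabolic Morrey embedding $\bH^\sigma_{p_0}\hookrightarrow C^{\vartheta/\sigma,\vartheta}$ gives the H\"older bound in part (ii). The $\lambda v$ statements follow verbatim with $g=\lambda v$.

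The step I expect to be the main obstacle is the control of the tails in the second paragraph together with its propagation through the iteration: one must sum $\sum_k\varepsilon^{k}2^{k\sigma}(\cdots)$ so that it collapses precisely into $\sum_z(1+|z|)^{-\sigma}\|g\|_{L_{p_0}(B_{R/2}(x_0+zR))}$ using nothing beyond $\nu_t(B^c_r)\le\Lambda r^{-\sigma}$ (no regularity of $\nu_t$ in $y$), and one must verify that composing these weighted sums in the bootstrap does not degrade the $(1+|z|)^{-\sigma}$ decay --- which is exactly where $\sigma>1$ and $d=1$ enter. A minor technical point is the mollification needed to commute $\cL$ past the equation while keeping $g$ in the right local spaces.
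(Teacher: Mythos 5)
Your proposal is correct and follows essentially the same route as the paper: apply a time cutoff, use Lemma \ref{lem10142050} (with the cutoff-generated $f$) together with the parabolic Sobolev/Morrey embedding \eqref{eq11072241}/\eqref{eq11290052}, estimate the tail term using only $\nu_t(B_r^c)\le\Lambda r^{-\sigma}$ by decomposing into length-$R$ pieces so the weight collapses to $(1+|z|)^{-\sigma}$, and bootstrap the exponent, invoking $\sigma>1$ (with $d=1$) to ensure the discrete self-convolution of $(1+|z|)^{-\sigma}$ preserves the decay. The only deviations are cosmetic (dyadic annuli vs.\ the paper's length-$R/2$ intervals $A_z$, and a couple of harmless exponent typos such as $R^{d/p_0}$ for $R^{-d/p_0}$ and $d/(\sigma-\vartheta)$ for $(d+\sigma)/(\sigma-\vartheta)$).
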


\begin{proof}
We only prove \eqref{eq11222351} and \eqref{eq11222353} since the proofs of \eqref{eq11222352} and \eqref{eq11222354} follow in the same way.
 
    $(i)$
    By shifting the coordinates, we only prove the case when $x_0=0$.

    Let us define
    \begin{equation} \label{eq11290038}
        1/p_m:=1/p_0-m\sigma/(d+\sigma), \quad m\in\bN.
    \end{equation} 
    We first consider the case when $p\in(p_0,p_1)$ and $p_0\leq d/\sigma+1$.
    
    Let $\chi\in C^\infty(\bR)$ so that $0\leq\chi\leq1$, $\text{supp}(\chi)\subset (t_0-R^\sigma,t_0+R^\sigma)$, and $\chi=1$ in $(t_0-(R/2)^\sigma,t_0)$. Then $\psi(t,x):=v(t,x)\chi(t) \in \bH_{p_0,0}^\sigma(t_0-R^\sigma,t_0)$ satisfies
    \begin{equation*}
    \partial_t\psi = L\psi -\lambda \psi + v\partial_t\chi \quad \text{ in } (t_0-R^\sigma,t_0)\times\bR^d.
  \end{equation*}
  Similar to \eqref{eq10151503}, by \eqref{eq11072241} with $p$ instead of $l$, and \eqref{eq10120058} with $
  \varepsilon=2^{-2\sigma}$,
  \begin{align} \label{eq11251255}
      &(|\cL v|^{p})_{Q_{R/2}(t_0,0)}^{1/{p}} = (|\cL \psi|^{p})_{Q_{R/2}(t_0,0)}^{1/{p}} \nonumber
      \\
      &\leq NR^\vartheta \|\partial_t(\zeta_0\cL \psi)\|_{L_{p_0}((t_0-R^\sigma,t_0)\times \bR^d)} \nonumber 
      \\
      &\quad + NR^\vartheta  \|(-\Delta)^{\sigma/2}(\zeta_0\cL \psi)\|_{L_{p_0}((t_0-R^\sigma,t_0)\times \bR^d)} \nonumber
      \\
        &\leq N R^\vartheta \|(\cL v)\partial_t\chi\|_{L_{p_0}\left((t_0-R^\sigma,t_0)\times B_{R}\right)} \nonumber
        \\
        &\quad + N R^{-(d+\sigma)/p_0} \|\cL\psi\|_{L_{p_0}\left((t_0-R^\sigma,t_0)\times B_{R}\right)}\nonumber
    \\
    &\quad + NR^{-\sigma/p_0}\sum_{k=0}^\infty 2^{-k\sigma} \left(\int_{t_0-R^\sigma}^{t_0} \left|\cT^{R}_{p_0,2^{-k-4}}(\cL \psi) (t,0) \right|^{p_0} dt\right)^{1/p_0},
  \end{align}
  where $\vartheta:=\sigma-(d+\sigma)/p_0$ and $\zeta_0\in C_c^\infty(B_{R})$ is taken from Lemma \ref{lem10142050}.
  As in \eqref{eq10151506}, by using $|\partial_t\chi|\leq NR^{-\sigma}$ and the triangle inequality,
  \begin{align} \label{eq11251256}
       &R^\vartheta \|(\cL v)\partial_t\chi\|_{L_{p_0}\left((t_0-R^\sigma,t_0)\times B_{R}\right)} + R^{-(d+\sigma)/p_0} \|\cL\psi\|_{L_{p_0}\left((t_0-R^\sigma,t_0)\times B_{R}\right)} \nonumber
       \\
       &\leq N (|\cL v|^{p_0})_{Q_{R}(t_0,0)}^{1/p_0} \nonumber
       \\
       &\leq N (|\cL v|^{p_0})^{1/p_0}_{(t_0-R^\sigma,t_0)\times B_{R/2}(-R)} + N (|\cL v|^{p_0})^{1/p_0}_{(t_0-R^\sigma,t_0)\times B_{R/2}(0)} \nonumber
       \\
       &\quad + N (|\cL v|^{p_0})^{1/p_0}_{(t_0-R^\sigma,t_0)\times B_{R/2}(R)}.
  \end{align}

  Next, we consider the term related to $\cT^{R}_{p_0,2^{-k-4}}(\cL \psi)$. Due to \eqref{eq10181920} and \eqref{equiv}, letting $A_z:=\{zR/2\leq y <(z+1)R/2\}$,
\begin{align*}
    &\cT^{R}_{p_0,2^{-k-4}}(\cL \psi)(t,0)
    \\
    &= N2^{-k\sigma}R^{\sigma-d/p_0} \sum_{z\in\bZ} \int_{B_{2^{-k-4}R}^c \cap A_z} \|\cL\psi(t,\cdot)\|_{L_{p_0}(B_{R}(y))} \nu_t(dy)
    \\
    &\leq N2^{-k\sigma} R^{\sigma-d/p_0} \sum_{z\in\bZ} \int_{B_{2^{-k-4}R}^c \cap A_z} \|\cL\psi(t,\cdot)\|_{L_{p_0}(B_{2R}(\frac{zR}{2}))} \nu_t(dy)
    \\
    &\leq N R^{-d/p_0} \|\cL\psi(t,\cdot)\|_{L_{p_0}(B_{2R}(0))} + N 2^{-k\sigma} |z|^{-\sigma} R^{-d/p_0} \sum_{z\neq 0} \|\cL\psi(t,\cdot)\|_{L_{p_0}(B_{2R}(\frac{zR}{2}))}
    \\
    &\leq N R^{-d/p_0}\sum_{z\in\bZ} (1+|z|)^{-\sigma} \|\cL\psi(t,\cdot)\|_{L_{p_0}(B_{2R}(\frac{zR}{2}))}.
\end{align*}
As in \eqref{eq11251256}, by the triangle inequality,
\begin{equation*}
    \sum_{z\in\bZ} (1+|z|)^{-\sigma} \|\cL\psi(t,\cdot)\|_{L_{p_0}(B_{2R}(\frac{zR}{2}))} \leq N \sum_{z\in\bZ} (1+|z|)^{-\sigma} \|\cL\psi(t,\cdot)\|_{L_{p_0}(B_{R/2}(zR))}.
\end{equation*}
Thus, by the Minkowski inequality,
\begin{align*}
    &R^{-\sigma/p_0} \sum_{k=0}^\infty  2^{-k\sigma} \left(\int_{t_0-R^\sigma}^{t_0} \left|\cT^{R}_{p,2^{-k-4}}(\cL \psi) (t,0) \right|^{p_0} dt\right)^{1/p_0}
    \\
    &\leq N \sum_{k=0}^\infty  2^{-k\sigma} \sum_{z\in\bZ} (1+|z|)^{-\sigma}(|\cL v|^{p_0})^{1/p_0}_{(t_0-R^\sigma,t_0)\times B_{R/2}(zR)}
    \\
    &\leq N \sum_{z\in\bZ} (1+|z|)^{-\sigma} (|\cL v|^{p_0})^{1/p_0}_{(t_0-R^\sigma,t_0)\times B_{R/2}(zR)}.
\end{align*}
This, \eqref{eq11251255}, and \eqref{eq11251256} yield the desired result when $p\in (p_0,p_1)$.

Next, we consider the case when $p\in[p_1,p_2)$ and $p_0\leq p_1\leq d/\sigma+1$. Due to H\"older's inequality, it suffices to deal with $p\in(p_1,p_2)$.
Since \eqref{eq11222351} and \eqref{eq11222352} hold for arbitrary $x_0\in\bR$, we have $\partial_t v \in L_p((t_0-(R/2)^{\sigma},t_0)\times B_R)$ for any $p\in (p_0,p_1)$, and thus the fundamental theorem of calculus implies that we also get $v \in L_p((t_0-(R/2)^{\sigma},t_0)\times B_R)$. Hence, the weaker condition presented in Remark \ref{rem1127} is satisfied. Thus, by repeating the above argument with another cutoff function in time, for $p\in(p_1,p_2)$, there is $\tilde{p}\in(p_0,p_1)$ such that
\begin{equation*}
    1/\tilde{p}=1/p-\sigma/(d+\sigma)
\end{equation*}
and
\begin{align*}
        (|\cL v|^p)^{1/p}_{(t_0-(R/4)^\sigma,t_0)\times B_{R/2}} \leq N \sum_{z\in\bZ} (1+|z|)^{-\sigma} (|\cL v|^{\tilde{p}})^{1/\tilde{p}}_{(t_0-(R/2)^\sigma,t_0)\times B_{R/2}(zR)}.
\end{align*}
Thanks to the result for the range $(p_0,p_1)$ (with a shift of the coordinates),
\begin{align} \label{eq12022217}
        &(|\cL v|^p)^{1/p}_{(t_0-(R/4)^\sigma,t_0)\times B_{R/2}} \nonumber
        \\
        &\leq N \sum_{z\in\bZ} (1+|z|)^{-\sigma} \left( \sum_{y\in\bZ} (1+|y|)^{-\sigma} (|\cL v|^{p_0})^{1/p_0}_{(t_0-R^\sigma,t_0)\times B_{R/2}(zR+yR)} \right) \nonumber
        \\
        &= N \sum_{y,z\in\bZ} (1+|z|)^{-\sigma} \left(1+\left|y-z\right|\right)^{-\sigma} (|\cL v|^{p_0})^{1/p_0}_{(t_0-R^\sigma,t_0)\times B_{R/2}(yR)} \nonumber
        \\
        &\leq N \sum_{y\in\bZ} (1+|y|)^{-\sigma} (|\cL v|^{p_0})^{1/p_0}_{(t_0-R^\sigma,t_0)\times B_{R/2}(yR)}.
\end{align}
Here, for the last inequality, we used $\sigma>1$ and
\begin{align*}
    (1+|z|)^{-\sigma} \left(1+\left|y-z\right|\right)^{-\sigma} &\leq N 1_{A} (1+|z|)^{-\sigma} (1+|y|)^{-\sigma} 
    \\
    &\quad + N 1_{A^c} (1+|y|)^{-\sigma} \left(1+\left|y-z\right|\right)^{-\sigma} 
    \\
    &\leq N(1+|y|)^{-\sigma} \left( (1+|z|)^{-\sigma} + \left(1+\left|y-z\right|\right)^{-\sigma}\right),
\end{align*}
where $A:=\{(y,z)\in \bZ\times\bZ: \frac{|y|}{2} \leq \left|y-z\right|\}$.
By a covering argument in the time direction (see e.g. \cite[Remark 2.15]{XX22}), we obtain the desired result.

For general case when $p\in (p_0,\infty)$, we repeat the above process with $p_m$ finite times until $p_i>d/\sigma+1$. We also remark that by \eqref{eq11290038}, one can find that the number of iteration is bounded by a constant depending only on $d,\sigma$, and $p_0$.

$(ii)$
Let us find $p\in(1,\infty)$ so that
\begin{equation*}
    \vartheta=\sigma-(d+\sigma)/p.
\end{equation*}
By \eqref{eq11290052} and \eqref{eq10120058}, instead of \eqref{eq11251255}, one can obtain that
\begin{align*}
    &R^\vartheta [\cL v]_{C^{\vartheta/\sigma,\vartheta}(Q_{R/2}(t_0,x_0))} 
    \\
    &\leq  N R^\vartheta \|(\cL v)\partial_t\chi\|_{L_{p}\left((t_0-R^\sigma,t_0)\times B_{R}\right)} \nonumber
        \\
        &\quad + N R^{-(d+\sigma)/p} \|\cL\psi\|_{L_{p}\left((t_0-(R/2)^\sigma,t_0)\times B_{R}\right)}\nonumber
    \\
    &\quad + NR^{-\sigma/p}\sum_{k=0}^\infty 2^{-k\sigma} \left(\int_{t_0-R^\sigma}^{t_0} \left|\cT^{R}_{p,2^{-k-4}}(\cL \psi) (t,0) \right|^{p} dt\right)^{1/p},
\end{align*}
where $\psi:=v\chi$ is defined as in $(i)$. Then by repeating the argument presented in $(i)$, one can actually obtain \eqref{eq11222353} with $p$ instead of $p_0$. If $p<p_0$, then one just needs to apply H\"older's inequality, and if $p=p_0$, then the proof is complete since this is the case we aim for. When $p>p_0$, similar to \eqref{eq12022217},
\begin{align*}
    &R^\vartheta [\cL v]_{C^{\vartheta/\sigma,\vartheta}(Q_{R/4}(t_0,x_0))} 
    \\
    &\leq N \sum_{z\in\bZ} (1+|z|)^{-\sigma} \left( \sum_{y\in\bZ} (1+|y|)^{-\sigma} (|\cL v|^{p_0})^{1/p_0}_{(t_0-R^\sigma,t_0)\times B_{R/2}(zR+yR)} \right) 
    \\
    &\leq N \sum_{y\in\bZ} (1+|y|)^{-\sigma} (|\cL v|^{p_0})^{1/p_0}_{(t_0-R^\sigma,t_0)\times B_{R/2}(yR)}.
\end{align*}
The lemma is proved.
\end{proof}

\begin{lemma} \label{lem11292047}
    Let $p_0\in(1,\infty)$, $t_0\in (0,\infty)$, $x_0\in\bR$, $R>0$, and $\lambda\geq0$. Suppose that $L_t$ and $\cL_t$ satisfy Assumptions \ref{levy} and \ref{upper}, respectively. Let $w\in \bH_{p_0,0}^{\sigma}(t_0-R^\sigma,t_0)$ satisfy \eqref{eq1129114}. Then for any $R>0$,
    \begin{align*}
    (|\cL w|^{p_0})^{1/p_0}_{Q_{R/2}(t_0,x_0)} + \lambda (|w|^{p_0})^{1/p_0}_{Q_{R/2}(t_0,x_0)} &\leq N\sum_{j=0}^\infty 2^{(1-\sigma)j}(|f|^{p_0})^{1/p_0}_{(t_0-R^\sigma,t_0)\times B_{2^jR}(x_0)},
\end{align*}
where $N=N(\sigma,\Lambda,p_0,N_0)$.
\end{lemma}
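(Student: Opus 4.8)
The plan is to apply the unweighted a priori estimate (Theorem \ref{mainthm}, which we may assume holds for $p_0$ in the inductive/bootstrapping framework of this section, or at minimum for $\cL = -(-\Delta)^{\sigma/2}$ together with the operator-equivalence \eqref{oper_bdd}) to $w$ after localizing in space. Since $w$ solves the inhomogeneous equation \eqref{eq1129114} on the cylinder $(t_0-R^\sigma,t_0)\times\bR$ with zero initial data at $t_0-R^\sigma$ (after extending $f$ by zero for $t<0$ and invoking $w\in\bH_{p_0,0}^\sigma(t_0-R^\sigma,t_0)$), the global estimate \eqref{mainest} gives
\[
\|\cL w\|_{L_{p_0}((t_0-R^\sigma,t_0)\times\bR)} + \lambda\|w\|_{L_{p_0}((t_0-R^\sigma,t_0)\times\bR)} \leq N\|f\|_{L_{p_0}((t_0-R^\sigma,t_0)\times\bR)}.
\]
Dividing by $R^{(1+\sigma)/p_0}$ would already bound the left side of the claim by $N(|f|^{p_0})^{1/p_0}_{(t_0-R^\sigma,t_0)\times\bR}$, but the right side of the lemma is a \emph{local} average over dyadically growing balls, so a naive global estimate is too crude. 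First I would instead decompose $f = \sum_{j\geq0} f_j$, where $f_0 := f\,1_{B_R(x_0)}$ and $f_j := f\,1_{B_{2^jR}(x_0)\setminus B_{2^{j-1}R}(x_0)}$ for $j\geq1$, and let $w_j$ solve \eqref{eq1129114} with right-hand side $f_j$; by uniqueness (Theorem \ref{mainthm}(iii)) $w = \sum_j w_j$.

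For the near piece $w_0$, apply \eqref{mainest} directly and divide by $R^{(1+\sigma)/p_0}$ to get the $j=0$ term $N(|f|^{p_0})^{1/p_0}_{(t_0-R^\sigma,t_0)\times B_R(x_0)}$. For the far pieces $w_j$ with $j\geq1$, the source $f_j$ is supported away from $B_R(x_0)$, so on $Q_{R/2}(t_0,x_0)$ the value $\cL w_j$ only sees the tail of $\nu_t$: more precisely, for $x\in B_{R/2}(x_0)$ and any $z$ with $f_j(t,x+z)\neq 0$ one has $|z|\gtrsim 2^jR$, and by the mass bound \eqref{equiv} (here $d=1$), $\nu_t(B_{c2^jR}^c)\leq N(2^jR)^{-\sigma}$. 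Using the Minkowski and Hölder inequalities one estimates, for $x\in B_{R/2}(x_0)$,
\[
|\cL w_j(t,x)| \lesssim \int_{|z|\gtrsim 2^jR} |w_j(t,x+z)|\,\nu_t(dz) \;+\;(\text{terms controlled by }\|\cL w_j\|),
\]
and combining with the global bound $\|\cL w_j\|_{L_{p_0}} + \lambda\|w_j\|_{L_{p_0}}\leq N\|f_j\|_{L_{p_0}}$ together with $\nu_t(B_{c2^jR}^c)\leq N(2^jR)^{-\sigma}$ and Hölder in $z$ yields a decay factor $2^{-\sigma j}$ against the cost $2^{j/p_0'}$ of enlarging the ball from $B_{R/2}$ to $B_{2^jR}$; rescaling by the appropriate power of $R$ (note $(1+\sigma)/p_0$ versus $(1/p_0)\cdot$(measure of $B_{2^jR}$)) produces exactly the weight $2^{(1-\sigma)j}$ stated in the lemma. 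The $\lambda w$ term is handled identically, using $\lambda\|w_j\|_{L_{p_0}}\leq N\|f_j\|_{L_{p_0}}$ in place of the $\cL$ bound. Summing over $j$ (the series $\sum_j 2^{(1-\sigma)j}$ need not converge — it is kept explicit in the statement) gives the claim.

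The main obstacle I expect is controlling $\cL w_j$ on the inner cylinder for the far pieces: one cannot use $\cL w_j$ pointwise in a naive way because $\cL$ is nonlocal and $w_j$ need not be smooth, so the tail must be split carefully into (a) a genuinely far-field part where $x+z$ lies in the support region of $f_j$ and the measure-decay \eqref{equiv} is applied, and (b) a near part of the support of $w_j$ that is reabsorbed into $\|\cL w_j\|_{L_{p_0}}$ and $\lambda\|w_j\|_{L_{p_0}}$. Making this split rigorous likely requires a mollification of \eqref{eq1129114} (as in the proof of Lemma \ref{lem10151525}), plus a careful bookkeeping of the power of $R$ so that the local average normalization $(\cdot)_{(t_0-R^\sigma,t_0)\times B_{2^jR}(x_0)}^{1/p_0}$ comes out with precisely the factor $2^{(1-\sigma)j}$ and no extraneous power of $R$; the exponent $1-\sigma$ (rather than $-\sigma$) reflects exactly the one-dimensional volume gain $2^{j/p_0}$ against the measure decay $2^{-j\sigma/p_0}$ — wait, more carefully $2^{-j\sigma}$ from $\nu_t$ combined with $2^{j/p_0'}$ from enlarging the ball and the change of normalization. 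Everything else is a routine application of Theorem \ref{mainthm} and standard inequalities.
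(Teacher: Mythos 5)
Your approach — decomposing $f = \sum_j f_j$ into dyadic annular pieces, solving for $w_j$, and trying to show a far-field decay for $\cL w_j$ on $Q_{R/2}(t_0,x_0)$ — has a genuine gap that the sketch does not resolve, and it is a different route from the paper's.

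The central difficulty is your pointwise estimate
\[
|\cL w_j(t,x)| \lesssim \int_{|z|\gtrsim 2^jR} |w_j(t,x+z)|\,\nu_t(dz) + (\text{terms controlled by }\|\cL w_j\|).
\]
This would be correct if $w_j$ itself vanished near $B_{R}(x_0)$, because then for $x\in B_{R/2}(x_0)$ the nonlocal operator $\cL w_j(t,x)=\int (w_j(t,x+y)-w_j(t,x)-\nabla w_j(t,x)\cdot y^{(\sigma)})\,\widetilde\nu_t(dy)$ would reduce to a tail integral. But only $f_j$ is supported in the far annulus, not $w_j$: as a solution of a nonlocal parabolic equation, $w_j$ spreads instantly over all of $\bR$, and there is no smallness of $w_j$ near $x_0$ available for free. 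To extract a quantitative decay of $\cL w_j$ near $x_0$ from the fact that $f_j$ lives far away, you would have to multiply $w_j$ by cutoffs and control the nonlocal commutator $L_t(\zeta w_j)-\zeta L_t w_j$, which is exactly the content of Lemma~\ref{lem10142050}; but that lemma reintroduces nonlocal tail terms $\cT^R_{p_0,\kappa}$ involving $w_j$ at large distance, so you are back to needing a self-improvement mechanism rather than a one-shot Minkowski/H\"older estimate. Moreover, once you go that route the "global bound $\|\cL w_j\|_{L_{p_0}}\leq N\|f_j\|_{L_{p_0}}$" cannot be fed into a $2^{-\sigma j}$ decay factor: that bound carries no spatial localization, so "reabsorbed into $\|\cL w_j\|_{L_{p_0}}$" does not produce the decay you need.

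The paper does not decompose $f$ at all. It applies the localization estimate \eqref{eq10120058} of Lemma~\ref{lem10142050} directly to $w$ on the inner cylinder, then expands the tail term $\cT^R_{p_0,2^{-k-4}}w(t,0)$ over dyadic annuli $\{2^{j-1}R\le|y|<2^jR\}$ using \eqref{equiv} to produce the quantities $A_j=R^{-\sigma}(|w|^{p_0})^{1/p_0}_{(t_0-R^\sigma,t_0)\times B_{2^jR}}$. Then it controls $A_j$ by $\|\partial_t w\|$ (via the zero initial condition and \eqref{eq110722228}) and applies \eqref{eq10120058} again at scale $2^{j+1}R$, yielding a recursion $A_j \le N(|f|^{p_0})^{1/p_0}_{(t_0-R^\sigma,t_0)\times B_{2^{j+1}R}} + N\sum_{l>j}2^{(1-\sigma)l}A_l$. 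Multiplying by $2^{(1-\sigma)j}$ and summing, the crucial hypothesis $\sigma>1$ makes $2^{1-\sigma}<1$, so one can take $j_0$ large enough to absorb the trailing $\sum A_l$; the finitely many low-$j$ terms are then controlled directly. This multi-scale bootstrap is the substitute for the far-field decay you were trying to assert for the $w_j$'s. If you want to salvage your decomposition, the missing ingredient is essentially this same absorption lemma applied to each $w_j$, at which point the decomposition buys nothing and the paper's direct iteration on $w$ is cleaner.
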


\begin{proof}
As in the proof of Lemma \ref{lem12051053}, we may assume that $x_0=0$.
    By \eqref{eq10120058} with $\varepsilon = 2^{-2\sigma}$,
  \begin{align*}
  &(|\cL w|^{p_0})^{1/p_0}_{Q_{R/2}(t_0,0)} + \lambda (|w|^{p_0})^{1/p_0}_{Q_{R/2}(t_0,x_0)}
  \\
  &\leq N(|f|^{p_0})^{1/p_0}_{Q_{R}(t_0,0)} + N R^{-\sigma} (|w|^{p_0})^{1/p_0}_{Q_{R}(t_0,0)}
    \\
    &\quad + NR^{-\sigma-\sigma/p_0} \sum_{k=0}^\infty 2^{-k\sigma} \left( \int_{t_0-R^\sigma}^{t_0} \left|\cT_{p_0,2^{-k-4}}^R w(t,0) \right|^{p_0} dt \right)^{1/p_0}.
  \end{align*}
  For $C_j:=\{2^{j-1}R\leq |y|<2^jR\}$, due to \eqref{eq10181920} and \eqref{equiv},
  \begin{align*}
    &\cT_{p_0,2^{-k-4}}^R w(t,0)
    \\
    &= N 2^{-k\sigma} R^{\sigma-1/p_0} \sum_{j=0}^\infty \int_{B_{2^{-k-4} R}^c\cap C_j} \|w(t,\cdot)\|_{L_{p_0}(B_R(y))} \nu_t(dy)
    \\
    &\leq N 2^{-k\sigma} R^{\sigma} \sum_{j=0}^\infty 2^{j/p_0} \int_{B_{2^{-k-4} R}^c\cap C_j} \left( \aint_{B_{2^{j+1}R}} |w(t,z)|^{p_0} dz \right)^{1/p_0} \nu_t(dy)
    \\
    &\leq N \left( \aint_{B_{2R}} |w(t,z)|^{p_0} dz \right)^{1/p_0} + N 2^{-k\sigma} \sum_{j=0}^\infty 2^{(1/p_0-\sigma)j} \left( \aint_{B_{2^{j+1}R}} |w(t,z)|^{p_0} dz \right)^{1/p_0}.
\end{align*}
Since $1/p_0<1$, we have
\begin{align} \label{eq11291220}
    &(|\cL w|^{p_0})^{1/p_0}_{Q_{R/2}(t_0,0)} + \lambda (|w|^{p_0})^{1/p_0}_{Q_{R/2}(t_0,x_0)} \nonumber
    \\
    &\leq N(|f|^{p_0})^{1/p_0}_{Q_{R}(t_0,0)} + N  \sum_{j=0}^\infty 2^{(1-\sigma)j}R^{-\sigma} (|w|^{p_0})^{1/p_0}_{(t_0-R^\sigma,t_0)\times B_{2^jR}} \nonumber
    \\
    &=: N(|f|^{p_0})^{1/p_0}_{Q_{R}(t_0,0)} + N \sum_{j=0}^\infty 2^{(1-\sigma)j} A_j.
\end{align}

Now we focus on $A_j$. By \eqref{eq110722228} and \eqref{eq10120058} with $\varepsilon = 2^{-2\sigma}$ and with $2^{j+1}R$ in place of $R$,
\begin{align} \label{eq11291209}
    A_j &\leq N(|\partial_t w|^{p_0})^{1/p_0}_{(t_0-R^\sigma,t_0)\times B_{2^jR}} \nonumber
    \\
    &\leq N(|f|^{p_0})^{1/p_0}_{(t_0-R^\sigma,t_0)\times B_{2^{j+1}R}(x_0)} + N\sum_{l=j+1}^\infty 2^{(1-\sigma)l} A_l,
\end{align}
where $N$ is independent of $j$. Let us multiply \eqref{eq11291209} by $2^{(1-\sigma)j}$ and take the sum over $j\geq j_0$. Then we have
\begin{align*}
    &\sum_{j=j_0}^\infty 2^{(1-\sigma)j} A_j 
    \\
    &\leq N \sum_{j=j_0}^\infty 2^{(1-\sigma)j} (|f|^{p_0})^{1/p_0}_{(t_0-R^\sigma,t_0)\times B_{2^{j+1}R}(x_0)} + N \sum_{j=j_0}^\infty \sum_{l=j+1}^\infty 2^{(1-\sigma)(j+l)} A_l
    \\
    &\leq N \sum_{j=j_0}^\infty 2^{(1-\sigma)j} (|f|^{p_0})^{1/p_0}_{(t_0-R^\sigma,t_0)\times B_{2^{j+1}R}(x_0)} + N\frac{2^{(1-\sigma)j_0}}{1-2^{1-\sigma}} \sum_{l=j_0}^\infty 2^{(1-\sigma)l} A_l.
\end{align*}
As $\sigma>1$, if we choose a sufficiently large $j_0$, we have
\begin{equation} \label{eq11291219}
    \sum_{j=j_0}^\infty 2^{(1-\sigma)j} A_j  \leq N \sum_{j=j_0}^\infty 2^{(1-\sigma)j} (|f|^{p_0})^{1/p_0}_{(t_0-R^\sigma,t_0)\times B_{2^{j+1}R}(x_0)}.
\end{equation}
Combining this and \eqref{eq11291209} with $j=0,1,\cdots,j_0-1$, we arrive at \eqref{eq11291219} with $j_0=0$. It remains to apply \eqref{eq11291220}. The lemma is proved.
\end{proof}

\begin{lemma}
      Let $p_0\in(1,\infty)$, $T\in(0,\infty)$, and $\lambda\geq0$.
      Suppose that $L_t$ and $\cL_t$ satisfy Assumptions \ref{levy} and \ref{upper}, respectively, and that $\cL_t\equiv\cL$ is independent of $t$. Let $u\in \bH_{p_0,0}^{\sigma}(0,T)$ satisfy 
      \begin{equation*}
          \partial_t u = L_tu -\lambda u + f \quad \text{ in } \bR_T.
      \end{equation*}
      Then for any $t_0\in (-\infty,T)$, $x_0\in\bR$, $R>0$, $\vartheta\in(0,\sigma)$, and $\rho\in(0,1/4)$,
      \begin{align} \label{eq11292035}
          &(|\cL u - (\cL u)_{Q_{\rho R}(t_0,x_0)}|)_{Q_{\rho R}(t_0,x_0)} + (|\lambda u - (\lambda u)_{Q_{\rho R}(t_0,x_0)}|)_{Q_{\rho R}(t_0,x_0)} \nonumber
          \\
          &\leq N\rho^\vartheta \sum_{j=0}^\infty 2^{(1-\sigma)j} \left((|\cL u|^{p_0})_{(t_0-R^\sigma,t_0)\times B_{2^jR}(x_0)}^{1/p_0} + (|\lambda u|^{p_0})_{(t_0-R^\sigma,t_0)\times B_{2^jR}(x_0)}^{1/p_0} \right) \nonumber
          \\
          &\quad + N\rho^{-(1+\sigma)/p_0} \sum_{j=0}^\infty 2^{(1-\sigma)j} (|f|^{p_0})_{(t_0-R^\sigma,t_0)\times B_{2^jR}(x_0)}^{1/p_0},
      \end{align}
      where $N=N(p_0,\sigma,\Lambda,N_0,\vartheta)$, and $u$ and $f$ are extended to be zero for $t<0$.
\end{lemma}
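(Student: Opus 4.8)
The plan is to run the usual decomposition-and-oscillation argument. First, I may assume $t_0\in(0,T)$, since otherwise $Q_{\rho R}(t_0,x_0)\subset\{t<0\}$, where $u\equiv 0$, and \eqref{eq11292035} is vacuous. Following the setup of this section, write $u=v+w$ over $(t_0-R^\sigma,t_0)\times\bR$, with $w$ carrying the source $f$ and zero data at $t_0-R^\sigma$ (so \eqref{eq1129114} holds) and $v$ solving the homogeneous equation (so \eqref{eq11222305} holds). I will estimate the oscillations of $\cL v+\cL w$ and of $\lambda v+\lambda w$ on $Q_{\rho R}(t_0,x_0)$ separately and add them by the triangle inequality.

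For the $w$-part, since $\rho<1/4$ we have $Q_{\rho R}(t_0,x_0)\subset Q_{R/2}(t_0,x_0)$, so Jensen's inequality and the volume ratio $|Q_{R/2}|/|Q_{\rho R}|=(2\rho)^{-(1+\sigma)}$ give
\[(|\cL w-(\cL w)_{Q_{\rho R}(t_0,x_0)}|)_{Q_{\rho R}(t_0,x_0)}\le N\rho^{-(1+\sigma)/p_0}(|\cL w|^{p_0})^{1/p_0}_{Q_{R/2}(t_0,x_0)},\]
and the same for $\lambda w$; Lemma~\ref{lem11292047} bounds the right-hand side by $N\rho^{-(1+\sigma)/p_0}\sum_j2^{(1-\sigma)j}(|f|^{p_0})^{1/p_0}_{(t_0-R^\sigma,t_0)\times B_{2^jR}(x_0)}$, i.e.\ the last term of \eqref{eq11292035}.

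For the $v$-part, Lemma~\ref{lem12051053}(ii) provides $C^{\vartheta/\sigma,\vartheta}$ bounds for $\cL v$ and $\lambda v$ on $Q_{R/2}(t_0,x_0)$, so the oscillation on $Q_{\rho R}(t_0,x_0)$ is at most $N(\rho R)^\vartheta[\cL v]_{C^{\vartheta/\sigma,\vartheta}(Q_{R/2}(t_0,x_0))}\le N\rho^\vartheta\sum_{z\in\bZ}(1+|z|)^{-\sigma}(|\cL v|^{p_0})^{1/p_0}_{(t_0-R^\sigma,t_0)\times B_{R/2}(x_0+zR)}$, and similarly with $\lambda v$. Now split $\cL v=\cL u-\cL w$. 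For the $\cL u$ piece I group $z$ dyadically: the balls $B_{R/2}(x_0+zR)$ are pairwise disjoint, $\sim 2^j$ of those with $|z|\sim 2^j$ lie in $B_{2^{j+1}R}(x_0)$, and $(1+|z|)^{-\sigma}\sim 2^{-j\sigma}$, so Hölder's inequality on the counting measure collapses the $z$-sum into $N\sum_j2^{(1-\sigma)j}(|\cL u|^{p_0})^{1/p_0}_{(t_0-R^\sigma,t_0)\times B_{2^jR}(x_0)}$; since $\rho^\vartheta\le1$ this is absorbed into the first term of \eqref{eq11292035} (the $\lambda u$ piece being handled identically via \eqref{eq11222354}). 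For the $\cL w$ piece, after extending $w$ by zero to $\{t<t_0-R^\sigma\}$ it solves the equation with source $f\mathbf 1_{(t_0-R^\sigma,t_0)}$ on $(-\infty,t_0)\times\bR$, so Lemma~\ref{lem11292047} applied at scale $2R$ with center $(t_0,x_0+zR)$, together with $(t_0-R^\sigma,t_0)\times B_{R/2}(x_0+zR)\subset Q_R(t_0,x_0+zR)$ and the gain from the time cutoff, gives for each $z$
\[(|\cL w|^{p_0})^{1/p_0}_{(t_0-R^\sigma,t_0)\times B_{R/2}(x_0+zR)}\le N\sum_{i\ge0}2^{(1-\sigma)i}(|f|^{p_0})^{1/p_0}_{(t_0-R^\sigma,t_0)\times B_{2^{i+1}R}(x_0+zR)}.\]
Summing against $(1+|z|)^{-\sigma}$, interchanging the $i$- and $z$-sums, splitting the inner $z$-sum into $|z|\le 2^{i+1}$ and $|z|>2^{i+1}$, and using that both $\sum_z(1+|z|)^{-\sigma}$ and $\sum_i2^{(1-\sigma)i}$ converge, the $\cL w$ piece is bounded by $N\sum_m2^{(1-\sigma)m}(|f|^{p_0})^{1/p_0}_{(t_0-R^\sigma,t_0)\times B_{2^mR}(x_0)}$, again inside the last term of \eqref{eq11292035}.

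The main obstacle is this last tail manipulation for the $\cL w$ piece. The oscillation of $\cL v$ is controlled only through a tail of averages of $\cL v=\cL u-\cL w$, and re-expanding $\cL w$ via Lemma~\ref{lem11292047} turns it into a tail of averages of $f$, producing a convolution-type double sum. The delicate point is to carry out the summations in the order that avoids a spurious polynomial-in-scale factor: keeping the $z$-summation innermost and exploiting the summability of $(1+|z|)^{-\sigma}$, which holds precisely because $\sigma\in(1,2)$, is what makes the double sum collapse to the single weight $2^{(1-\sigma)m}$ required on the right-hand side of \eqref{eq11292035}.
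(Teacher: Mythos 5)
Your proposal follows the same skeleton as the paper's proof: the $u=v+w$ split, the oscillation bound for the $w$-part via Jensen's inequality and Lemma~\ref{lem11292047}, the H\"older seminorm bound for the $v$-part via Lemma~\ref{lem12051053}(ii), the further split $\cL v = \cL u - \cL w$, and the dyadic regrouping of the $z$-sum. The treatment of the $\cL u$ piece is correct and matches the paper's $I_1$. However, your description of the $\cL w$ piece (the paper's $I_2$) contains a genuine gap.

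After applying Lemma~\ref{lem11292047} at each $z$ and interchanging, you are faced with
\[
\sum_{i\ge 0} 2^{(1-\sigma)i}\sum_{z\in\bZ}(1+|z|)^{-\sigma}\,(|f|^{p_0})^{1/p_0}_{(t_0-R^\sigma,t_0)\times B_{2^{i+1}R}(x_0+zR)}.
\]
You assert that this collapses to $N\sum_m 2^{(1-\sigma)m}(|f|^{p_0})^{1/p_0}_{(t_0-R^\sigma,t_0)\times B_{2^mR}(x_0)}$ merely by splitting the inner $z$-sum at $|z|=2^{i+1}$ and invoking the convergence of $\sum_z(1+|z|)^{-\sigma}$ and $\sum_i 2^{(1-\sigma)i}$. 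This is not enough. For $|z|>2^{i+1}$, the enlargement $B_{2^{i+1}R}(x_0+zR)\subset B_{2|z|R}(x_0)$ costs a volume factor in the \emph{normalized} average: $(|f|^{p_0})^{1/p_0}_{B_{2^{i+1}R}(x_0+zR)}\le (2|z|/2^{i+1})^{1/p_0}(|f|^{p_0})^{1/p_0}_{B_{2|z|R}(x_0)}$. Grouping $|z|\sim 2^k$ (with $\sim 2^k$ indices) and summing the $(1+|z|)^{-\sigma}$ weight then yields, after re-indexing, a tail weight $2^{m(1-\sigma+1/p_0)}$ rather than $2^{m(1-\sigma)}$, which is strictly heavier for every $p_0<\infty$ and does not produce the stated bound. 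The fix is exactly the H\"older-on-counting-measure step you already used for the $\cL u$ piece, applied again to the dyadic block $\{|z|\sim 2^k\}$: one bounds $\sum_{|z|\sim 2^k}(|f|^{p_0})^{1/p_0}_{B_{2^jR}(zR)}$ by $2^{k(1-1/p_0)}\big(\sum_{|z|\sim 2^k}(|f|^{p_0})_{B_{2^jR}(zR)}\big)^{1/p_0}$, and the inner sum of \emph{unnormalized} averages (before the root) absorbs into a single big-ball average with precisely the compensating power of $2^k$. That is the step the paper carries out, with the split $1_{k\ge j}$ versus $1_{k<j}$ determining whether the controlling ball is $B_{2^{k+2}R}$ or $B_{2^{j+2}R}$. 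In short: the mechanism you describe ("summability of both tails") fails in the regime $|z|>2^{i+1}$; H\"older's inequality in $z$ is unavoidable here, just as it was for the $\cL u$ piece.

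Apart from this, the rest of the proposal (including the observation that $w$, extended by zero, solves the equation on a longer time interval so that Lemma~\ref{lem11292047} may be applied at shifted centers, and the absorption of $\rho^\vartheta\le 1$ into the $\cL u$ term) is correct and consistent with the paper.
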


\begin{proof}
  Upon shifting the coordinates, it suffices to prove \eqref{eq11292035} when $x_0=0$. By Theorem \ref{mainthm}, there exists $w\in \bH_{p,0}^\sigma(t_0-R^\sigma,t_0)$, which satisfies \eqref{eq1129114}. By extending $w$ to be zero for $t<t_0-R^\sigma$, one can see that $v:=u-w\in \bH_{p_0,0}^{\sigma}(S,t_0)$ with $S=\min\{0,t_0-R^\sigma\}$ satisfies \eqref{eq11222305}.

  For $w$, by H\"older's inequality and Lemma \ref{lem11292047},
  \begin{align} \label{eq11292305}
      &(|\cL w - (\cL w)_{Q_{\rho R}(t_0,0)}|)_{Q_{\rho R}(t_0,0)} + (|\lambda w - (\lambda w)_{Q_{\rho R}(t_0,0)}|)_{Q_{\rho R}(t_0,0)} \nonumber
      \\
      &\leq N\rho^{-(1+\sigma)/p_0} \left( (|\cL w|^{p_0})^{1/p_0}_{Q_{R/2}(t_0,0)} + \lambda (|w|^{p_0})^{1/p_0}_{Q_{R/2}(t_0,0)} \right) \nonumber
      \\
      &\leq N \rho^{-(1+\sigma)/p_0} \sum_{j=0}^\infty 2^{(1-\sigma)j}(|f|^{p_0})^{1/p_0}_{(t_0-R^\sigma,t_0)\times B_{2^jR}}.
  \end{align}

  Next, for $v$, by \eqref{eq11222353} and \eqref{eq11222354},
  \begin{align} \label{eq11292300}
      &(|\cL v - (\cL v)_{Q_{\rho R}(t_0,0)}|)_{Q_{\rho R}(t_0,0)} + (|\lambda v - (\lambda v)_{Q_{\rho R}(t_0,0)}|)_{Q_{\rho R}(t_0,0)} \nonumber
      \\
      &\leq N\rho^\vartheta R^\vartheta \left([\cL v]_{C^{\vartheta/\sigma,\vartheta}(Q_{R/4}(t_0,0))} + [\lambda v]_{C^{\vartheta/\sigma,\vartheta}(Q_{R/4}(t_0,0))} \right) \nonumber
      \\
      &\leq N \rho^\vartheta \sum_{z\in\bZ} (1+|z|)^{-\sigma} \left((|\cL v|^{p_0})^{1/p_0}_{Q_{R/2}(t_0,zR)} + (|\lambda v|^{p_0})^{1/p_0}_{Q_{R/2}(t_0,zR)} \right) \nonumber
      \\
      &\leq N\rho^\vartheta \sum_{z\in\bZ} (1+|z|)^{-\sigma} \left((|\cL u|^{p_0})^{1/p_0}_{Q_{R/2}(t_0,zR)} + (|\lambda u|^{p_0})^{1/p_0}_{Q_{R/2}(t_0,zR)} \right) \nonumber
      \\
      &\quad + N\rho^\vartheta \sum_{z\in\bZ} (1+|z|)^{-\sigma} \left((|\cL w|^{p_0})^{1/p_0}_{Q_{R/2}(t_0,zR)} + (|\lambda w|^{p_0})^{1/p_0}_{Q_{R/2}(t_0,zR)} \right) \nonumber
      \\
      &=: I_1 + I_2.
  \end{align}
  Let us estimate $I_1$. Due to the similarity, we only consider the term related to $\cL u$.  By H\"older's inequality,
  \begin{align*}
      &\sum_{|z|\geq1} (1+|z|)^{-\sigma} (|\cL u|^{p_0})^{1/p_0}_{Q_{R/2}(t_0,zR)} 
      \\
      &\leq N\sum_{j=0}^\infty \sum_{|z|=2^j}^{2^{j+1}} |z|^{-\sigma} (|\cL u|^{p_0})^{1/p_0}_{Q_{R/2}(t_0,zR)}
      \\
      &\leq N\sum_{j=0}^\infty 2^{-\sigma j} \sum_{|z|=2^j}^{2^{j+1}} (|\cL u|^{p_0})^{1/p_0}_{Q_{R/2}(t_0,zR)}
      \\
      &\leq N \sum_{j=0}^\infty 2^{(-\sigma+1-1/p_0) j} \left( \sum_{|z|=2^j}^{2^{j+1}} (|\cL u|^{p_0})_{Q_{R/2}(t_0,zR)} \right)^{1/p_0}
      \\
      &\leq N\sum_{j=0}^\infty 2^{(1-\sigma) j} (|\cL u|^{p_0})_{(t_0-R^\sigma,t_0)\times B_{2^jR}}^{1/p_0}.
  \end{align*}
Thus,
  \begin{align} \label{eq11292301}
      I_1 &\leq N\rho^\vartheta \sum_{j=0}^\infty 2^{(1-\sigma)j} \left((|\cL u|^{p_0})_{(t_0-R^\sigma,t_0)\times B_{2^jR}}^{1/p_0} + (|\lambda u|^{p_0})_{(t_0-R^\sigma,t_0)\times B_{2^jR}}^{1/p_0} \right).
  \end{align}
  Now we handle $I_2$. As above, we only estimate $\cL w$. By Lemma \ref{lem11292047},
  \begin{align*}
      &\sum_{|z|\geq1} (1+|z|)^{-\sigma} (|\cL w|^{p_0})^{1/p_0}_{Q_{R/2}(t_0,zR)} 
      \\
      &\leq N\sum_{|z|\geq1} \sum_{j=0}^\infty 2^{(1-\sigma)j} (1+|z|)^{-\sigma}  (|f|^{p_0})^{1/p_0}_{(t_0-R^\sigma,t_0)\times B_{2^jR}(zR)}
      \\
      &\leq N\sum_{k=0}^\infty \sum_{|z|=2^k}^{2^{k+1}} \sum_{j=0}^\infty 2^{(1-\sigma)j} (1+|z|)^{-\sigma}  (|f|^{p_0})^{1/p_0}_{(t_0-R^\sigma,t_0)\times B_{2^jR}(zR)}.
  \end{align*}
  For $k\geq0$, by H\"older's inequality,
  \begin{align*}
      &\sum_{|z|=2^k}^{2^{k+1}} (1+|z|)^{-\sigma}  (|f|^{p_0})^{1/p_0}_{(t_0-R^\sigma,t_0)\times B_{2^jR}(zR)}
      \\
      &\leq N2^{-k\sigma} \sum_{|z|=2^k}^{2^{k+1}}(|f|^{p_0})^{1/p_0}_{(t_0-R^\sigma,t_0)\times B_{2^jR}(zR)}
      \\
      &\leq N 2^{(-\sigma+1-1/p_0)k}\left(\sum_{|z|=2^k}^{2^{k+1}} (|f|^{p_0})_{(t_0-R^\sigma,t_0)\times B_{2^jR}(zR)} \right)^{1/p_0}
      \\
      &\leq N 2^{(-\sigma+1)k} \Big( 1_{k\geq j} (|f|^{p_0})^{1/p_0}_{(t_0-R^\sigma,t_0)\times B_{2^{k+2}R}} + 1_{k<j} (|f|^{p_0})^{1/p_0}_{(t_0-R^\sigma,t_0)\times B_{2^{j+2}R}} \Big).
  \end{align*}
  Thus,
  \begin{align*}
      &\sum_{k=0}^\infty \sum_{|z|=2^k}^{2^{k+1}} \sum_{j=0}^\infty 2^{(1-\sigma)j} (1+|z|)^{-\sigma}  (|f|^{p_0})^{1/p_0}_{(t_0-R^\sigma,t_0)\times B_{2^jR}(zR)}
      \\
      &\leq N\sum_{k=0}^\infty 2^{(1-\sigma)k} \sum_{j=0}^\infty 2^{(1-\sigma)j} \Big( 1_{k\geq j} (|f|^{p_0})^{1/p_0}_{(t_0-R^\sigma,t_0)\times B_{2^{k+2}R}} 
      \\
      &\qquad \qquad \qquad \qquad \qquad \qquad \quad+ 1_{k<j} (|f|^{p_0})^{1/p_0}_{(t_0-R^\sigma,t_0)\times B_{2^{j+2}R}} \Big)
      \\
      &\leq N \sum_{k=0}^\infty 2^{(1-\sigma)k} (|f|^{p_0})^{1/p_0}_{(t_0-R^\sigma,t_0)\times B_{2^{k}R}},
  \end{align*}
  which yields
  \begin{equation} \label{eq11292302}
      I_2 \leq N \rho^\vartheta\sum_{k=0}^\infty 2^{(1-\sigma)k} (|f|^{p_0})^{1/p_0}_{(t_0-R^\sigma,t_0)\times B_{2^{k}R}}.
  \end{equation}
  Hence, by \eqref{eq11292300}, \eqref{eq11292301}, and \eqref{eq11292302},
  \begin{align} \label{eq11292306}
      &(|\cL v - (\cL v)_{Q_{\rho R}(t_0,0)}|)_{Q_{\rho R}(t_0,0)} + (|\lambda v - (\lambda v)_{Q_{\rho R}(t_0,0)}|)_{Q_{\rho R}(t_0,0)} \nonumber 
      \\
      &\leq N\rho^\vartheta \sum_{j=0}^\infty 2^{(1-\sigma)j} \left((|\cL u|^{p_0})_{(t_0-R^\sigma,t_0)\times B_{2^jR}}^{1/p_0} + (|\lambda u|^{p_0})_{(t_0-R^\sigma,t_0)\times B_{2^jR}}^{1/p_0} \right) \nonumber
      \\
      &\quad + N\rho^\vartheta \sum_{k=0}^\infty 2^{(1-\sigma)k} (|f|^{p_0})^{1/p_0}_{(t_0-R^\sigma,t_0)\times B_{2^{k}R}}.
  \end{align}
  Since $u=v+w$, \eqref{eq11292305} and \eqref{eq11292306} leads to \eqref{eq11292035}. The lemma is proved.  
\end{proof}

Before we present the proof of Theorem \ref{thm_weight}, we introduce the following dyadic cubes. For each $n \in \bZ$, we assign an integer $k(n)\in\bZ$ such that
\begin{equation*}
    k(n) \le \sigma n < k(n)+1.
\end{equation*}
Let
\begin{equation*}
    Q^{n}_{\mathbf{i}}
=
\bigg[ \frac{i_0}{2^{k(n)}} + T, \frac{i_0+1}{2^{k(n)}} + T \bigg)
\times
\bigg[ \frac{i_1}{2^{n}}, \frac{i_1+1}{2^{n}} \bigg)
\times \cdots \times
\bigg[ \frac{i_d}{2^{n}}, \frac{i_d+1}{2^{n}} \bigg),
\end{equation*}
where $\mathbf{i}=(i_0,\dots,i_d)\in \bZ^{d+1}$ and $i_0\leq -1$.
Next, the dyadic sharp function of $u$ is defined by
\begin{equation} \label{eq1130123}
    u^{\sharp}_{dy}(t,x)
=
\sup_{n<\infty}
\int_{Q^{n}_{\mathbf{i}} \ni (t,x)}
|u(s,y) - u_{|n}(t,x)|  dyds,
\end{equation}
where
\begin{equation*}
    u_{|n}(t,x)
=
\int_{Q^{\,n}_{\mathbf{i}}} u(s,y) dyds
\quad \text{ for } (t,x) \in Q^{n}_{\mathbf{i}}.
\end{equation*}

\begin{proof}[Proof of Theorem \ref{thm_weight}]

We first note that the continuity of $L_t=-(-\Delta)^{\sigma/2}$ from $H_{p,\omega_2}^{\sigma}(\bR)$ to $L_{p,\omega_2}(\bR)$ is presented in \cite[Theorem 2.5]{DJK23}. Thus, we only need to prove a version of \textbf{1} in the proof of Theorem \ref{mainthm}: we assume that $L_t$ is continuous and aim to obtain \eqref{weightest} for $u\in C_c^\infty([0,T]\times \bR)$ with $u(0,x)=0$.

Let $\omega(t,x)=\omega_1(t)\omega_2(x)$ such that $\omega_1\in A_q(\bR)$ and $\omega_2\in A_p(\bR)$. By reverse H\"older's inequality for Muckenhoupt weights (see e.g. \cite[Corollary 7.2.6]{G14}), there are $\gamma_1=\gamma_1(q,K_0)$ and $\gamma_2=\gamma_2(p,K_0)$ so that $q-\gamma_1>1$, $p-\gamma_2>1$, and
\begin{equation*}
    \omega_1 \in A_{q-\gamma_1}(\bR), \quad \omega_2 \in A_{p-\gamma_2}(\bR).
\end{equation*}
Due to the relation $A_{r_1}(\bR)\subset A_{r_2}(\bR)$ for $r_1\leq r_2$ (see e.g. \cite[Proposition 7.1.5]{G14}), letting
\begin{equation*}
    p_0:=\min\left\{ \frac{q}{q-\gamma_1}, \frac{p}{p-\gamma_2} \right\} \in (1,\infty),
\end{equation*}
we have
\begin{equation*}
    \omega_1 \in A_{q-\gamma_1}(\bR) \subset A_{q/p_0}(\bR), \quad \omega_2 \in A_{p-\gamma_2}(\bR) \subset A_{p/p_0}(\bR).
\end{equation*}

Let $(t_0,x_0) \in (-\infty,T)\times \bR$. Then for any $Q^{n}_{\mathbf{i}}$ containing $(t_0,x_0)$, one can find $R=R(\sigma,n)>0$ such that
\begin{equation*}
    Q^{n}_{\mathbf{i}} \subset Q_R(t_1,x_0), \quad |Q_R(t_1,x_0)| \leq N(\sigma) |Q^{n}_{\mathbf{i}}|,
\end{equation*}
where $t_1 := \min\{T,t_0+R^\sigma/2\}$.
Since
\begin{equation*}
    \left|\aint_A h \, dxdt - \aint_B h \,dxdt \right| \leq \frac{|B|}{|A|}\aint_{B} |h-(h)_B| \, dxdt, \quad A\subset B,
\end{equation*}
by applying \eqref{eq11292035} with $t_1$ in place of $t_0$, and using $\sigma\in(1,2)$,
\begin{align*}
    &\aint_{Q^{n}_{\mathbf{i}}\ni (t_0,x_0)} \left(|\cL u(t,x)- (\cL u)_{|n}(t_0,x_0)| + |\lambda u(t,x)- (\lambda u)_{|n}(t_0,x_0)| \right) dxdt
    \\
    &\leq N\rho^\vartheta \sum_{j=0}^\infty 2^{(1-\sigma)j} \left((|\cL u|^{p_0})_{(t_0-R^\sigma,t_0)\times B_{2^j\rho^{-1}R}(x_0)}^{1/p_0} + (|\lambda u|^{p_0})_{(t_0-R^\sigma,t_0)\times B_{2^j\rho^{-1}R}(x_0)}^{1/p_0} \right) \nonumber
    \\
     &\quad + N\rho^{-(1+\sigma)/p_0} \sum_{j=0}^\infty 2^{(1-\sigma)j} (|f|^{p_0})_{(t_0-R^\sigma,t_0)\times B_{2^j\rho^{-1}R}(x_0)}^{1/p_0}
    \\
     &\leq N \rho^\vartheta (\cS\cM|\cL u|^{p_0})^{1/p_0}(t_0,x_0) + N \rho^\vartheta (\cS\cM|\lambda u|^{p_0})^{1/p_0}(t_0,x_0) 
     \\
     &\quad + N\rho^{-(1+\sigma)/p_0}(\cS\cM|f|^{p_0})^{1/p_0}(t_0,x_0),
\end{align*}
where $\vartheta\in(0,\sigma)$, and $\cS\cM h$ is the strong maximal function defined by
\begin{equation*}
    (\cS\cM h)(t_0,x_0) := \sup_{(t-R_1^\sigma,t)\times B_{R_2}(x) \ni (t_0,x_0)} \aint_{(t-R_1^\sigma,t)\times B_{R_2}(x)} |h(r,z)|1_{r<T} drdz.
\end{equation*}
Thus, by \eqref{eq1130123},
\begin{align*}
    &(\cL u)^{\sharp}_{dy}(t_0,x_0) + (\lambda u)^{\sharp}_{dy}(t_0,x_0)
    \\
    &\leq N \rho^\vartheta (\cS\cM|\cL u|^{p_0})^{1/p_0}(t_0,x_0) + N \rho^\vartheta (\cS\cM|\lambda u|^{p_0})^{1/p_0}(t_0,x_0) 
    \\
    &\quad + N\rho^{-(1+\sigma)/p_0}(\cS\cM|f|^{p_0})^{1/p_0}(t_0,x_0).
\end{align*}
Now we apply the weighted sharp function theorem and the weighted maximal function theorem (see \cite[Corollary 2.7]{DK18} and \cite[Theorem 5.2]{DK21}) to get
\begin{align*}
    \|\cL u\|_{L_{q,p,\omega}(\bR_T)} + \lambda\|u\|_{L_{q,p,\omega}(\bR_T)} &\leq N\rho^\vartheta (\|\cL u\|_{L_{q,p,\omega}(\bR_T)} + \lambda\|u\|_{L_{q,p,\omega}(\bR_T)}) 
    \\
    &\quad + N\rho^{-(1+\sigma)/p_0} \|f\|_{L_{q,p,\omega}(\bR_T)}.
\end{align*}
Since $N$ is independent of $\rho$, one can take a sufficiently small $\rho\in(0,1/4)$ to obtain
\begin{equation*}
    \|\cL u\|_{L_{q,p,\omega}(\bR_T)} + \lambda\|u\|_{L_{q,p,\omega}(\bR_T)} \leq N\|f\|_{L_{q,p,\omega}(\bR_T)}.
\end{equation*}
Here, we remark that one may choose $\vartheta=\sigma/2\in(0,\sigma)$ to remove the dependence of $N$ on $\vartheta$.
Finally, the estimation of $\partial_t u$ follows directly from \eqref{weighteq}.
    The theorem is proved.
\end{proof}

\appendix 

\section{Miscellaneous lemmas} \label{secA}

\begin{lem}
  Let $\nu_t$ be a family of L\'evy measures satisfying Assumption \ref{upper}.

  $(i)$ For any $c<\sigma$, there exists $N=N(\sigma,\Lambda)$ such that
  \begin{equation} \label{eq8270021}
  \int_{B_r^c} |y|^c \nu_t(dy) \leq N r^{c-\sigma}.
\end{equation}

$(ii)$ For any $c>\sigma$, there exists $N=N(c,\Lambda)$ such that
\begin{equation} \label{eq9251355}
  \int_{B_{r}} |y|^c \nu_t(dy) \leq N r^{c-\sigma}.
\end{equation}
\end{lem}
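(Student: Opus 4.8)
The plan is to deduce both estimates from the single quantitative input available, the tail bound $\nu_t(B_\rho^c)\le\Lambda\rho^{-\sigma}$ from Assumption \ref{upper}$(ii)$, by means of the layer-cake (Cavalieri) representation of $|y|^c$; the whole matter then collapses to the elementary fact that $\int\rho^{c-1-\sigma}\,d\rho$ converges at $\infty$ precisely when $c<\sigma$ and at $0$ precisely when $c>\sigma$, which is exactly the dichotomy between the two parts. A dyadic annular decomposition is an equally good route and yields the same bounds up to universal factors, but the integral form is cleaner and I would present that.

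For $(i)$, I would fix $c<\sigma$ and write, for $|y|>r$,
\[
  |y|^c = r^c + \int_r^{|y|} c\,\rho^{c-1}\,d\rho ,
\]
integrate this identity against $\nu_t$ over $B_r^c$, and apply Tonelli's theorem to the resulting double integral, which turns it into $\int_r^\infty c\rho^{c-1}\,\nu_t(\{|y|>\rho\})\,d\rho$. Bounding $\nu_t(B_r^c)\le\Lambda r^{-\sigma}$ and $\nu_t(\{|y|>\rho\})\le\Lambda\rho^{-\sigma}$, and computing $\int_r^\infty \rho^{c-1-\sigma}\,d\rho = r^{c-\sigma}/(\sigma-c)$ (finite exactly because $c<\sigma$), one obtains
\[
  \int_{B_r^c}|y|^c\,\nu_t(dy) \le \Lambda r^{c-\sigma}\Big(1+\tfrac{c}{\sigma-c}\Big) = \tfrac{\sigma\Lambda}{\sigma-c}\,r^{c-\sigma},
\]
which is \eqref{eq8270021}.

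For $(ii)$, I would fix $c>\sigma$ and use instead $|y|^c=\int_0^{|y|} c\rho^{c-1}\,d\rho$, integrate over $B_r$ (where $\nu_t(\{0\})=0$ makes the distinction between $B_r$ and the punctured ball irrelevant), and apply Tonelli again to get $\int_0^r c\rho^{c-1}\,\nu_t(\{\rho<|y|<r\})\,d\rho$. Estimating $\nu_t(\{\rho<|y|<r\})\le\nu_t(B_\rho^c)\le\Lambda\rho^{-\sigma}$ and using $\int_0^r\rho^{c-1-\sigma}\,d\rho=r^{c-\sigma}/(c-\sigma)$ (finite exactly because $c>\sigma$) gives
\[
  \int_{B_r}|y|^c\,\nu_t(dy) \le \tfrac{c\Lambda}{c-\sigma}\,r^{c-\sigma},
\]
which is \eqref{eq9251355}; I would remark that this argument uses only the tail bound \eqref{upper_mu} and not the integrability $\int\min\{1,|y|^2\}\,\nu_t(dy)<\infty$ built into the definition of a L\'evy measure. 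There is essentially no obstacle: the only points deserving a word of care are the interchange of the order of integration (immediate by Tonelli since all integrands are nonnegative) and the convergence of the two one-dimensional power integrals, and in each case the sign condition on $c-\sigma$ imposed in the hypothesis is precisely what is needed.
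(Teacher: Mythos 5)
Your argument is correct in spirit and rests on exactly the same quantitative input as the paper's proof, namely the tail bound $\nu_t(B_\rho^c)\le\Lambda\rho^{-\sigma}$ from Assumption \ref{upper}$(ii)$; the only difference is the way the annulus is sliced. The paper decomposes $B_r^c$ (respectively $B_r$) into dyadic shells $B_{2^{k+1}r}\setminus B_{2^{k}r}$ (respectively $B_{2^{-k}r}\setminus B_{2^{-k-1}r}$), freezes $|y|^c$ at the scale $2^{k}r$, and ends with the geometric series $\sum_k 2^{k(c-\sigma)}$ or $\sum_k 2^{k(\sigma-c)}$, which converges precisely under the sign hypothesis on $c-\sigma$. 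You slice continuously in the radial variable via the layer-cake identity and arrive at $\int_r^\infty\rho^{c-1-\sigma}\,d\rho$ or $\int_0^r\rho^{c-1-\sigma}\,d\rho$, which converges under the same hypothesis. The two are a discrete and a continuous incarnation of one computation; the layer-cake version gives an explicit constant ($\sigma\Lambda/(\sigma-c)$ and $c\Lambda/(c-\sigma)$), while the dyadic version is a touch more robust to rough measures and is the form the paper reuses in several later lemmas (e.g.\ Lemma \ref{lem11292047}), so either presentation is acceptable.

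One genuine flaw, though easily repaired: in part $(i)$ your chain of inequalities is only valid for $c\in(0,\sigma)$. For $c\le 0$ the density $c\rho^{c-1}$ in the layer-cake term is nonpositive, so after justifying the interchange of integrals (by Tonelli applied to $|c|\rho^{c-1}$) you have
\begin{equation*}
\int_{B_r^c}|y|^c\,\nu_t(dy)=r^c\,\nu_t(B_r^c)+\int_r^\infty c\,\rho^{c-1}\,\nu_t(B_\rho^c)\,d\rho,
\end{equation*}
and replacing $\nu_t(B_\rho^c)$ by the \emph{larger} quantity $\Lambda\rho^{-\sigma}$ makes the second (negative) term \emph{more} negative, i.e.\ it produces a lower bound, not an upper bound. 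Your stated constant $\sigma\Lambda/(\sigma-c)$ is therefore incorrect for $c<0$ (it is strictly less than $\Lambda$, yet the left side can approach $\Lambda r^{c-\sigma}$). The fix is immediate: for $c\le 0$ simply drop the nonpositive term and use $r^c\,\nu_t(B_r^c)\le\Lambda r^{c-\sigma}$. You should either add this sentence or restrict your layer-cake computation to $c>0$ and treat $c\le 0$ separately. Part $(ii)$ has no such issue since there $c>\sigma>0$ always.
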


\begin{proof}
  $(i)$ Due to \eqref{equiv}, we have
  \begin{align*}
    \int_{B_r^c} |y|^c \nu_t(dy) &= \sum_{k=0}^\infty \int_{B_{2^{k+1}r}\setminus B_{2^kr}} |y|^c \nu_t(dy) \leq N r^c\sum_{k=0}^\infty 2^{kc} \int_{B_{2^kr}^c} \nu_t(dy) 
    \\
    &\leq N r^{c-\sigma}\sum_{k=0}^\infty 2^{k(c-\sigma)}  \leq N(\sigma,\Lambda) r^{c-\sigma}.
  \end{align*}

  $(ii)$ As in $(i)$, by \eqref{equiv},
  \begin{align*}
     \int_{B_{r}} |y|^c \nu_t(dy) &\leq r^{c} \sum_{k=0}^\infty 2^{-kc} \int_{B_{2^{-k}r}\setminus B_{2^{-k-1}r}} \nu_t(dy) \leq N r^{c-\sigma} \sum_{k=0}^\infty 2^{k(\sigma-c)} = N r^{c-\sigma}.
  \end{align*}
\end{proof}

\begin{lem}
  Let
    \begin{equation*}
    m(t,\xi):=\int_{\bR^d} \left(e^{i\xi\cdot y}-1 - i\xi\cdot y^{(\sigma)}\right) \nu_t(dy).
  \end{equation*}

  $(i)$ If $\nu_t$ satisfies Assumption \ref{upper}, then there exists $N=N(\sigma,\Lambda)$ such that
  \begin{equation} \label{ineq9251512}
    |m(t,\xi)|\leq N|\xi|^{\sigma}.
  \end{equation}

  $(ii)$ If $\nu_t$ satisfies Assumption \ref{levy}, then there exists $N=N(N_0)$ such that
  \begin{equation} \label{ineq9260032}
    -\mathrm{Re}(m(t,\xi))\geq N|\xi|^\sigma.
  \end{equation}
\end{lem}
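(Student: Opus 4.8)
The plan is to estimate $m(t,\xi)$ for $\xi\neq 0$ (the case $\xi=0$ being trivial since $m(t,0)=0$) by splitting the integral at the critical radius $R:=|\xi|^{-1}$ and using the elementary inequalities
\[
|e^{is}-1|\leq\min\{2,|s|\},\qquad |e^{is}-1-is|\leq \tfrac12 s^2\quad(s\in\bR),
\]
together with the moment bounds \eqref{eq8270021} and \eqref{eq9251355}. For $(i)$ I would treat the three ranges of $\sigma$ in turn. If $\sigma\in(1,2)$, then $y^{(\sigma)}=y$: on $B_R$ the integrand is dominated by $\tfrac12|\xi|^2|y|^2$, so \eqref{eq9251355} with $c=2$ contributes $N|\xi|^2R^{2-\sigma}=N|\xi|^\sigma$; on $B_R^c$ it is dominated by $2+|\xi||y|$, and \eqref{upper_mu} together with \eqref{eq8270021} with $c=1<\sigma$ contributes $N(R^{-\sigma}+|\xi|R^{1-\sigma})=N|\xi|^\sigma$. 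If $\sigma\in(0,1)$, then $y^{(\sigma)}=0$: on $B_R$ the integrand $e^{i\xi\cdot y}-1$ is bounded by $|\xi||y|$, so \eqref{eq9251355} with $c=1>\sigma$ gives $N|\xi|R^{1-\sigma}=N|\xi|^\sigma$; on $B_R^c$ it is bounded by $2$, and \eqref{upper_mu} gives $2\Lambda R^{-\sigma}=2\Lambda|\xi|^\sigma$.

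The borderline case $\sigma=1$ is the delicate one, and is where the cancellation hypothesis \eqref{cancel_mu} is essential. Here $y^{(\sigma)}=y\,1_{|y|\leq 1}$, and I would distinguish $|\xi|\geq 1$ (so $R\leq 1$) from $|\xi|<1$ (so $R>1$), splitting $\bR^d\setminus\{0\}$ into an inner ball, a middle annulus, and an outer region. On the inner ball ($B_R$ when $|\xi|\geq 1$, or $\{|y|\leq 1\}$ when $|\xi|<1$) the integrand equals $e^{i\xi\cdot y}-1-i\xi\cdot y$ and is controlled by $\tfrac12|\xi|^2|y|^2$ exactly as above, contributing $N|\xi|$. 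On the outer region $\{|y|>\max(R,1)\}$ the integrand is $e^{i\xi\cdot y}-1$, hence $\leq 2$, and \eqref{upper_mu} gives $\leq 2\Lambda|\xi|$. On the middle annulus the relevant moment exponent would be $c=1=\sigma$, which is precisely the exponent excluded from both \eqref{eq8270021} and \eqref{eq9251355}, so the naive bounds would cost a logarithm; instead I would use \eqref{cancel_mu} to delete or insert the drift $i\xi\cdot y$. On $\{R<|y|\leq 1\}$ (nonempty only when $|\xi|\geq 1$) this makes the integrand $e^{i\xi\cdot y}-1$, bounded by $2$, giving $\leq 2\Lambda|\xi|$ via \eqref{upper_mu}; on $\{1\leq|y|\leq R\}$ (nonempty only when $|\xi|\leq 1$) it makes the integrand $e^{i\xi\cdot y}-1-i\xi\cdot y$, which, since $|\xi\cdot y|\leq|\xi||y|\leq|\xi|R=1$ there, is bounded by $\tfrac12|\xi|^2|y|^2$ and then estimated using $\int_{B_R}|y|^2\,\nu_t(dy)\leq NR$ from \eqref{eq9251355}, again contributing $N|\xi|$. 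Summing the three pieces gives $|m(t,\xi)|\leq N|\xi|$. (Since $\nu_t$ restricted to any compact set away from the origin is finite, every bounding sphere carries $\nu_t$-mass $O(|\xi|^\sigma)$ and contributes only $O(|\xi|)$, so the exact placement of the dividing spheres is immaterial.)

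Part $(ii)$ is then immediate: the term $-i\xi\cdot y^{(\sigma)}$ is purely imaginary, so $-\mathrm{Re}\,m(t,\xi)=\int_{\bR^d}(1-\cos(\xi\cdot y))\,\nu_t(dy)$; discarding the nonnegative contribution of $\{|\xi\cdot y|>1\}$ and using $1-\cos s\geq c_0 s^2$ for $|s|\leq 1$ with $c_0:=1-\cos 1>0$, we get
\[
-\mathrm{Re}\,m(t,\xi)\;\geq\; c_0\int_{|\xi\cdot y|\leq 1}|\xi\cdot y|^2\,\nu_t(dy)\;\geq\; c_0 N_0|\xi|^\sigma
\]
by the nondegeneracy condition \eqref{nonde}.

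I expect the only genuine obstacle to be the case $\sigma=1$ of $(i)$: there the symbol can actually fail to be $O(|\xi|)$ once the principal-value cancellation is dropped, so the whole content of the argument in that range is to exploit \eqref{cancel_mu} to trade the untreatable moment exponent $c=\sigma$ for $c=2$ inside the critical ball $B_{|\xi|^{-1}}$ and for $c=0$ outside it; the remaining cases amount to routine splitting.
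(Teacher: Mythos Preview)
Your proposal is correct and follows essentially the same strategy as the paper: split at the scale $|\xi|^{-1}$, invoke the moment estimates \eqref{eq8270021}--\eqref{eq9251355}, and for $\sigma=1$ use the cancellation \eqref{cancel_mu} to trade the untreatable exponent $c=1$ for $c=2$ inside and $c=0$ outside; part $(ii)$ is identical in both. The only cosmetic differences are that the paper separates $m$ into its real and imaginary parts $I_1,I_2$ before estimating (you bound the full complex integrand directly), and for $\sigma=1$ the paper compresses your three-region argument into a single step by using \eqref{cancel_mu} to shift the drift cutoff from $|y|=1$ to $|y|=|\xi|^{-1}$, after which a single split at $|\xi|^{-1}$ suffices---but this is exactly your inner/middle/outer decomposition in disguise.
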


\begin{proof}
$(i)$  Note that
  \begin{align*}
    |m(t,\xi)| &=\int_{\bR^d} \left( 1-\cos(\xi\cdot y) \right) \nu_t(dy) 
    \\
    &\quad + \left|\int_{\bR^d} (\sin(\xi\cdot y) - \xi\cdot y^{(\sigma)}) \nu_t(dy) \right| =:I_1+I_2.
  \end{align*}
  We first estimate $I_1$. Using $1-\cos(x)\leq \min\{1,|x|^2\}$, \eqref{eq8270021}, and \eqref{eq9251355}, we have
  \begin{align*}
    I_1\leq \int_{|y|\leq|\xi|^{-1}} |\xi|^2|y|^2 \nu_t(dy) + \int_{|y|>|\xi|^{-1}} \nu_t(dy) \leq N |\xi|^{\sigma}.
  \end{align*}
   Similarly, by using $|x-\sin(x)|\leq N\min\{|x|,|x|^3\}$ for $\sigma>1$, and $|\sin(x)|\leq \min\{1,|x|\}$ for $\sigma<1$, one can easily show
  \begin{equation*}
    I_2 \leq N|\xi|^\sigma.
  \end{equation*}
  For $\sigma=1$, due to \eqref{upper_mu}, \eqref{cancel_mu}, and \eqref{eq9251355},
  \begin{align*}
      I_2 &\leq \int_{|y|\leq |\xi|^{-1}} |\sin(\xi\cdot y) - \xi\cdot y^{(\sigma)}| \nu_t(dy) + \int_{|y| >|\xi|^{-1}} |\sin(\xi\cdot y)| \nu_t(dy)
      \\
      &\leq N\left( |\xi|^3 \int_{|y|\leq |\xi|^{-1}} |y|^3 \nu_t(dy) + |\xi| \right) \leq N|\xi|.
  \end{align*}
  Thus, \eqref{ineq9251512} is proved.

$(ii)$ Since $1-\cos(x)\geq |x|^2/3$ for $|x|\leq 1$, we have
\begin{align*}
  -\text{Re}(m(t,\xi)) &= \int_{\bR^d} \left( 1-\cos(\xi\cdot y) \right) \nu_t(dy) \geq \frac{1}{3}\int_{|\xi\cdot y|\leq 1} |\xi\cdot y|^2 \nu_t(dy).
\end{align*}
Thus, by \eqref{nonde}, we have the desired result. The lemma is proved.
\end{proof}

  Recall that
  \begin{equation*}
     (\cT_{p,\kappa}^R u)(t,x) = \kappa^\sigma R^{\sigma-d/p} \int_{B_{\kappa R}^c} \|u(t,\cdot)\|_{L_p(B_R(x+y))} \nu_t(dy). 
\end{equation*}

\begin{lemma}
  Let $p\in(1,\infty)$, $R>0$, $k\in\bN_0$,
  \begin{align*}
    r_k:=R(1-2^{-k-1}),
  \end{align*}
  and $\zeta_k \in C_c^\infty(B_{r_{k+1}})$ such that $0\leq\zeta_k\leq1$, $\zeta_k=1$ in $B_{r_k}$, and
  \begin{align*}
    |D_x\zeta_k|\leq N\frac{2^k}{R}, \quad |D_x^2\zeta_k|\leq N \frac{2^{2k}}{R^2}.
  \end{align*}
  Then, under Assumption \ref{upper}, the following estimates hold for $I_k:=\|L_t(\zeta_k u)-\zeta_kL_tu\|_{L_p(\bR_T^d)}$.

  (i) If $\sigma\in(0,1)$, then 
  \begin{align} \label{eq10102353}
    I_k\leq N \frac{2^{k\sigma}}{R^\sigma} \|u\|_{L_p\left((0,T)\times B_{R}\right)} + N\frac{2^{k\sigma}}{R^{\sigma-d/p}} \left( \int_0^T \left|\cT_{p,2^{-k-4}}^R u(t,0) \right|^p dt \right)^{1/p}.
  \end{align}

  (ii) If $\sigma\in(1,2)$, then
  \begin{align} \label{eq10121035}
      I_k &\leq N\frac{2^{k(\sigma-1)}}{R^{\sigma-1}} \|\nabla u\|_{L_p\left((0,T)\times B_{r_{k+3}}\right)} + N \frac{2^{k\sigma}}{R^\sigma} \|u\|_{L_p\left((0,T)\times B_{R}\right)} \nonumber
      \\
      &\quad + N\frac{2^{k\sigma}}{R^{\sigma-d/p}} \left( \int_0^T \left|\cT_{p,2^{-k-4}}^R u(t,0) \right|^p dt \right)^{1/p}.
  \end{align}

  (iii) If $\sigma=1$, then
  for any $\varepsilon\in(0,1]$,
\begin{align} \label{eq10131542}
    I_k &\leq \varepsilon^3\|\nabla u\|_{L_p\left((0,T)\times B_{r_{k+3}}\right)} + N\varepsilon^{-3}\frac{2^k}{R} \|u\|_{L_p\left((0,T)\times B_{R}\right)} \nonumber
    \\
    &\quad + N\frac{2^{k\sigma}}{R^{\sigma-d/p}}\left( \int_0^T \left|\cT_{p,2^{-k-4}}^R u(t,0) \right|^p dt \right)^{1/p}.
\end{align}
  Here, in all the three cases, $N$ depends only on $\sigma, d,p$, and $\Lambda$.
\end{lemma}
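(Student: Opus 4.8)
The plan is to write the commutator pointwise as a sum of two pieces and estimate each separately. By density we may assume $u(t,\cdot)\in C_c^\infty(\bR^d)$ for a.e.\ $t$, so all integrals below converge absolutely and the final estimate extends by approximation. Using $\nabla(\zeta_k u)=u\nabla\zeta_k+\zeta_k\nabla u$ and rearranging, one gets for every $x$
\begin{equation*}
  L_t(\zeta_k u)(x)-\zeta_k(x)L_tu(x)=J_1(x)+J_2(x),
\end{equation*}
where $J_1(x)=u(x)\int_{\bR^d}\bigl(\zeta_k(x+y)-\zeta_k(x)-\nabla\zeta_k(x)\cdot y^{(\sigma)}\bigr)\nu_t(dy)$ and $J_2(x)=\int_{\bR^d}\bigl(\zeta_k(x+y)-\zeta_k(x)\bigr)\bigl(u(x+y)-u(x)\bigr)\nu_t(dy)$. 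I then split $\bR^d=B_R\cup B_R^c$. On $B_R^c$ we have $\zeta_k\equiv0$ and $\nabla\zeta_k\equiv0$, so there the commutator equals $\int_{\{x+y\in B_{r_{k+1}}\}}\zeta_k(x+y)u(x+y)\,\nu_t(dy)$; since $\zeta_k u(t,\cdot)$ is supported in $B_{r_{k+1}}\subset B_R$ while $|x|\ge R$ and $x+y\in B_{r_{k+1}}$ force $|y|\ge R-r_{k+1}=2^{-k-2}R$, Minkowski's integral inequality and \eqref{upper_mu} give $\|L_t(\zeta_ku)-\zeta_kL_tu\|_{L_p((0,T)\times B_R^c)}\le N2^{k\sigma}R^{-\sigma}\|u\|_{L_p((0,T)\times B_R)}$ in all three cases: only the tail of $\nu_t$ enters because translating a $B_{r_{k+1}}$-supported function outside $B_R$ forces $|y|$ bounded below.

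On $B_R$ I estimate $J_1$ and $J_2$ separately. For $J_1$ it suffices to bound the inner integral by $N2^{k\sigma}R^{-\sigma}$ uniformly in $x$: split it at $|y|=2^{-k-4}R$. On $\{|y|\le2^{-k-4}R\}$ the second-order Taylor remainder of $\zeta_k$ together with $|D^2_x\zeta_k|\le N2^{2k}R^{-2}$ and \eqref{eq9251355} with $c=2$ give $N2^{2k}R^{-2}(2^{-k}R)^{2-\sigma}=N2^{k\sigma}R^{-\sigma}$; on $\{|y|>2^{-k-4}R\}$ one uses $|D_x\zeta_k|\le N2^kR^{-1}$, \eqref{upper_mu}, and \eqref{eq8270021} with $c=1$ for $\sigma\in(1,2)$ (resp.\ \eqref{eq9251355} with $c=1$ for $\sigma\in(0,1)$, where no $\nabla\zeta_k$ term is present). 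When $\sigma=1$ one first uses the cancellation condition \eqref{cancel_mu} to replace the truncation $1_{|y|\le1}$ inside $y^{(\sigma)}$ by $1_{|y|\le2^{-k-4}R}$ — the difference being $\nabla\zeta_k(x)\cdot\int_{\{\text{annulus between }1\text{ and }2^{-k-4}R\}}y\,\nu_t(dy)=0$ — and then runs the same near/far split. This step is essential: without it the intermediate range ($1<|y|<2^{-k-4}R$, or its reverse) produces a genuine logarithmic loss. Hence $\|J_1\|_{L_p((0,T)\times B_R)}\le N2^{k\sigma}R^{-\sigma}\|u\|_{L_p((0,T)\times B_R)}$ (an extra $\varepsilon^{-3}$ when $\sigma=1$ being harmless since $\varepsilon\le1$).

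For $J_2$ on $B_R$ I split the $y$-integral at $|y|=2^{-k-4}R$, and when $\sigma=1$ insert a third region $\{|y|\le\delta\}$ with $\delta=c_\sigma\varepsilon^3\,2^{-k-4}R$. In the near region I use $|\zeta_k(x+y)-\zeta_k(x)|\le N2^kR^{-1}|y|$; for $\sigma\ge1$ combine this with $|u(x+y)-u(x)|\le|y|\int_0^1|\nabla u(x+sy)|\,ds$, Minkowski's inequality, \eqref{eq9251355} with $c=2$, and the elementary fact that $x\in\text{supp}(\zeta_k(\cdot+y)-\zeta_k)$ and $|y|\le2^{-k-4}R$ force $x+sy\in B_{r_{k+3}}$, to get $N2^{k(\sigma-1)}R^{1-\sigma}\|\nabla u\|_{L_p((0,T)\times B_{r_{k+3}})}$ when $\sigma\in(1,2)$ and (with $c_\sigma$ chosen so the constant is $\le1$) $\varepsilon^3\|\nabla u\|_{L_p((0,T)\times B_{r_{k+3}})}$ when $\sigma=1$; for $\sigma\in(0,1)$ one instead writes $|u(x+y)-u(x)|\le|u(x+y)|+|u(x)|$ and uses \eqref{eq9251355} with $c=1$, landing on $N2^{k\sigma}R^{-\sigma}\|u\|_{L_p((0,T)\times B_R)}$. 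When $\sigma=1$, the annulus $\{\delta<|y|\le2^{-k-4}R\}$ is handled the same way, and since $\int_{\delta<|y|\le2^{-k-4}R}|y|\,\nu_t(dy)\le2^{-k-4}R\,\nu_t(B_\delta^c)\le NR2^{-k}\delta^{-1}$, it produces exactly the $N\varepsilon^{-3}2^kR^{-1}\|u\|_{L_p((0,T)\times B_R)}$ term. In the far region $|y|>2^{-k-4}R$ I bound $|\zeta_k(x+y)-\zeta_k(x)|\le2$: the $u(x)$-part contributes $N2^{k\sigma}R^{-\sigma}\|u\|_{L_p((0,T)\times B_R)}$ via \eqref{upper_mu}, while the $u(x+y)$-part, after Minkowski in $x$ over $B_R$ and the identity $B_R+y=B_R(y)$, is bounded by $N\,2^{k\sigma}R^{d/p-\sigma}\,\cT_{p,2^{-k-4}}^{R}u(t,0)$ straight from the definition of $\cT_{p,2^{-k-4}}^{R}$; raising to the $p$-th power and integrating in $t$ yields the last term in each of \eqref{eq10102353}, \eqref{eq10121035}, \eqref{eq10131542}. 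Collecting these bounds gives the three estimates.

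I expect the main difficulty to be the bookkeeping rather than any single deep step: every power of $2^k$ and $R$ must come out exactly right, which pins the near/far cutoff at scale $2^{-k-4}R$ and forces the elementary estimates \eqref{eq8270021}--\eqref{eq9251355} to be invoked with the correct exponent $c$ on the appropriate side of $\sigma$, together with the geometric support facts ($r_{k+1}\subset r_{k+2}\subset r_{k+3}$ under the relevant translations). The genuinely delicate point is $\sigma=1$: the truncation in $y^{(\sigma)}$ sits at the fixed scale $1$, so \eqref{cancel_mu} must be used to slide it to scale $2^{-k-4}R$ (otherwise $J_1$ carries a true logarithmic divergence), and an extra intermediate annulus is needed to generate the $\varepsilon^{\pm3}$ dependence. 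The reduction to smooth $u$ is routine.
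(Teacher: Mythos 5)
Your proposal is correct and uses essentially the same strategy as the paper's proof: a near/far split at the scale $\tilde r_k:=2^{-k-4}R$, Lipschitz and second-order Taylor bounds on $\zeta_k$, Minkowski's inequality combined with the tail-measure estimates \eqref{equiv}, \eqref{eq8270021}, \eqref{eq9251355}, identification of the far $u(x+y)$ contribution with the tail operator $\cT^R_{p,2^{-k-4}}$, and, for $\sigma=1$, the use of the cancellation condition \eqref{cancel_mu} to shift the fixed truncation $1_{|y|\le 1}$ to the geometric scale plus an extra intermediate annulus to generate the $\varepsilon^{\pm 3}$. The only genuine difference is organizational: you split the commutator algebraically into $J_1$ (Taylor remainder of $\zeta_k$ times $u(x)$, integrated over all of $y$) and $J_2$ (double-difference term), then split $\bR^d=B_R\cup B_R^c$; the paper instead starts from the raw commutator identity \eqref{eq8270009}, splits the $y$-integral near/far first, and inserts $\pm(\zeta_k(x+y)-\zeta_k(x))u(x)$ only on the near range, relying on the vanishing of $\zeta_k,\nabla\zeta_k$ outside $B_{r_{k+1}}$ rather than on an explicit $B_R/B_R^c$ dichotomy. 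The two routes are equivalent and produce identical estimates. One cosmetic slip: in your discussion of the far range of $J_1$ you cite \eqref{eq9251355} with $c=1$ for $\sigma\in(0,1)$, but \eqref{eq9251355} is a near-range bound; for $\sigma\in(0,1)$ the far part of $J_1$ needs only $\nu_t(B_{\tilde r_k}^c)\le\Lambda\tilde r_k^{-\sigma}$ from \eqref{upper_mu}, while \eqref{eq9251355} with $c=1$ is what you use on the near range (with the first-order Lipschitz bound on $\zeta_k$, since there is no $\nabla\zeta_k$ term). This is a misplacement in the exposition, not a gap in the argument.
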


\begin{proof}
First, note that
\begin{align} \label{eq8270009}
  &L_t(\zeta_k u)-\zeta_kL_tu \nonumber
  \\
  &= \int_{\bR^d} \bigg( (\zeta_k(x+y)-\zeta_k(x))u(t,x+y)  - y^{(\sigma)}\cdot \nabla\zeta_k(x) u(t,x) \bigg) \nu_t(dy).
\end{align}
We estimate this integral according to the value of $\sigma$.

$(i)$ Let $\sigma\in(0,1)$ and $\tilde{r}_k=r_{k+3}-r_{k+2}=2^{-k-4}R$. By \eqref{eq8270009},
\begin{align*}
  |L_t(\zeta_k u)-\zeta_kL_tu| &\leq \int_{\bR^d} |\zeta_k(x+y)-\zeta_k(x)||u(t,x+y)| \nu_t(dy)
  \\
  &=\int_{B_{\tilde{r}_k}} +\int_{B_{\tilde{r}_k}^c} =: J_{k1}^1+J_{k2}^1.
\end{align*}
We first estimate $J_{k1}^1$. 
Note that $J_{k1}^1=0$ when $|x|>r_{k+2}$, and for $x\in B_{r_{k+2}}$ and $y\in B_{\tilde{r}_k}$,
\begin{equation} \label{eq10022329}
  |\zeta_k(x+y)-\zeta_k(x)|\leq N \|D_x \zeta_k\|_{L_\infty(\bR^d)}|y|1_{|x|\leq r_{k+2}} \leq N\frac{2^k}{R}|y|1_{|x|\leq r_{k+2}}.
\end{equation}
Thus, by the Minkowski inequality and \eqref{eq9251355},
\begin{align} \label{eq10030026}
  \|J_{k1}^1(t,\cdot)\|_{L_p(\bR^d)} &\leq N\frac{2^k}{R} \int_{B_{\tilde{r}_k}} \|u(t,\cdot+y)\|_{L_p\left(B_{r_{k+2}}\right)} |y| \nu_t(dy) \nonumber
  \\
  &\leq N\frac{2^k}{R} \|u(t,\cdot)\|_{L_p\left(B_{r_{k+3}}\right)} \int_{B_{\tilde{r}_k}} |y| \nu_t(dy) \nonumber
  \\
  &\leq N\frac{2^k}{R} \|u(t,\cdot)\|_{L_p\left(B_{r_{k+3}}\right)} \tilde{r}_k^{1-\sigma} = N\frac{2^{k\sigma}}{R^\sigma} \|u(t,\cdot)\|_{L_p\left(B_{R}\right)}.
\end{align}

To conclude \eqref{eq10102353}, it remains to estimate $J_{k2}^1$, which can be decomposed as
\begin{align*}
  J_{k2}^1 \leq \int_{B_{\tilde{r}_k}^c}  \left(1_{|x+y|<r_{k+1}}+1_{|x|<r_{k+1}}\right)|u(t,x+y)| \nu_t(dy) =: J_{k21}^1 + J_{k22}^1.
\end{align*}
For $J_{k21}^1$, by the Minkowski inequality and \eqref{eq8270021},
\begin{align} \label{eq10101149}
  \|J_{k2}^1(t,\cdot)\|_{L_p(\bR^d)} &\leq N \|u(t,\cdot)\|_{L_p(B_{r_{k+1}})} \int_{B_{\tilde{r}_k}^c} \nu_t(dy) \nonumber
  \\
  &\leq N \tilde{r}_k^{-\sigma}\|u(t,\cdot)\|_{L_p(B_{r_{k+1}})} \leq N \frac{2^{k\sigma}}{R^\sigma} \|u(t,\cdot)\|_{L_p(B_{R})}.
\end{align}
Lastly, by the Minkowski inequality and the inequality $r_{k+1}\leq R$,
\begin{align} \label{eq10101529}
    \|J_{k22}^1(t,\cdot)\|_{L_p(\bR^d)} &\leq \int_{B_{\tilde{r}_k}^c} \|u(t,\cdot)\|_{L_p(B_{r_{k+1}}(y))} \nu_t(dy) \nonumber
    \\
    &\leq \int_{B_{\tilde{r}_k}^c} \|u(t,\cdot)\|_{L_p(B_{R}(y))} \nu_t(dy) \nonumber
    \\
    &= \frac{2^{(k+4)\sigma}}{R^{\sigma-d/p}} \cT_{p,2^{-k-4}}^R u(t,0).
\end{align}
This together with \eqref{eq10030026} and \eqref{eq10101149} completes the estimation of $I_k$.

$(ii)$ Now we consider the case $\sigma\in(1,2)$. By \eqref{eq8270009},
\begin{align*}
  |L_t(\zeta_k u)-\zeta_kL_tu|   &\leq \int_{B_{\tilde{r}_k}} |\zeta_k(x+y)-\zeta_k(x)||u(t,x+y)-u(t,x)| \nu_t(dy)
  \\
  &\quad+ \int_{B_{\tilde{r}_k}} |\zeta_k(x+y)-\zeta_k(x) - y \cdot \nabla\zeta_k(x)||u(t,x)| \nu_t(dy)
  \\
  &\quad + \int_{B_{\tilde{r}_k}^c} |\zeta_k(x+y)-\zeta_k(x)||u(t,x+y)| \nu_t(dy)
  \\
  &\quad + \int_{B_{\tilde{r}_k}^c} |y\cdot \nabla\zeta_k(x)||u(t,x)| \nu_t(dy)
  \\
  &=:J^2_{k1}+J^2_{k2}+ J^2_{k3} + J^2_{k4}.
\end{align*}
For $J^2_{k1}$, by \eqref{eq10022329} and the fundamental theorem of calculus,
\begin{align*}
  J^2_{k1} \leq N \frac{2^k}{R} 1_{|x|<r_{k+2}} \int_{B_{\tilde{r}_k}} \int_0^1 |\nabla u(t,x+sy)| |y|^2 dsdy.
\end{align*}
Thus, a similar computation as in \eqref{eq10030026} leads to
\begin{align} \label{eq10110046}
  \|J_{k1}^2\|_{L_p(\bR_T^d)} \leq N\frac{2^{k(\sigma-1)}}{R^{\sigma-1}} \|\nabla u\|_{L_p\left((0,T)\times B_{r_{k+3}}\right)}.
\end{align}

Next, we deal with $J^2_{k2}$. Note that if $|x|\geq r_{k+2}$ and $y\in B_{\tilde{r}_k}$, then
\begin{equation*}
    |x+y| > |x|-|y| > 2r_{k+2} -r_{k+3} > r_{k+1},
\end{equation*}
which leads to $\zeta_k(x+y)-\zeta_k(x)-y \cdot \nabla\zeta_k(x)=0$. Also,
\begin{align*}
  |\zeta_k(x+y)-\zeta_k(x)-y \cdot \nabla\zeta_k(x)| \leq \|D^2_x\zeta_k\|_{L_\infty}|y|^2 \leq N \frac{2^{2k}}{R^2}|y|^2.
\end{align*}
Hence, by \eqref{eq9251355},
\begin{align*}
  J^2_{k2} &\leq N\frac{2^{2k}}{R^2}|u(t,x)|1_{|x|<r_{k+2}} \int_{B_{\tilde{r}_k}} |y|^2 \nu_t(dy) \leq N\frac{2^{k\sigma}}{R^\sigma} |u(t,x)|1_{|x|<r_{k+2}},
\end{align*}
which easily yields
\begin{align} \label{eq10111131}
  \|J_{k2}^2\|_{L_p(\bR_T^d)} \leq N\frac{2^{k\sigma}}{R^\sigma} \|u\|_{L_p\left((0,T)\times B_{R}\right)}.
\end{align}

For $J^2_{k3}$, as in \eqref{eq10101149} and \eqref{eq10101529},
\begin{align} \label{eq10192142}
  \|J_{k3}^2\|_{L_p(\bR_T^d)} &\leq N\frac{2^{k\sigma}}{R^\sigma} \|u\|_{L_p\left((0,T)\times B_{R}\right)} + N\frac{2^{k\sigma}}{R^{\sigma-d/p}} \left( \int_0^T \left|\cT_{p,2^{-k-4}}^R u(t,0) \right|^p dt \right)^{1/p}.
\end{align}

Lastly, by \eqref{eq8270021},
\begin{align*}
    J_{k4}^2 \leq N\frac{2^k}{R}|u(t,x)|1_{|x|<r_{k+1}} \int_{B_{\tilde{r}_k}^c} |y| \nu_t(dy) \leq N\frac{2^{k\sigma}}{R^\sigma}|u(t,x)|1_{|x|<r_{k+1}}.
\end{align*}
Thus,
\begin{align} \label{eq10192141}
  \|J_{k4}^2\|_{L_p(\bR_T^d)} \leq N\frac{2^{k\sigma}}{R^\sigma} \|u\|_{L_p\left((0,T)\times B_{R}\right)}.
\end{align}
Now \eqref{eq10121035} follows from \eqref{eq10110046}, \eqref{eq10111131}, \eqref{eq10192142}, and \eqref{eq10192141}.

$(iii)$ Lastly, we assume that $\sigma=1$. Let $c\in(0,1)$ be a constant which will be determined below. By using the cancellation condition \eqref{cancel_mu}, for $\delta_k:=c\tilde{r}_{k}$,
\begin{align*}
  &|L_t(\zeta_k u)-\zeta_kL_tu| 
  \\&\leq \int_{B_{\delta_k}} |\zeta_k(x+y)-\zeta_k(x)||u(x+y)-u(x)| \nu_t(dy)
  \\
  &\quad+ \int_{B_{\delta_k}} |\zeta_k(x+y) - \zeta_k(x) - y \cdot \nabla\zeta_k(x)||u(t,x)| \nu_t(dy)
  \\
  &\quad+\int_{\delta_k\leq |y| < \tilde{r}_k} |\zeta_k(x+y)-\zeta_k(x)||u(x+y)| \nu_t(dy)
  \\
  &\quad+\int_{B_{\tilde{r}_k}^c} |\zeta_k(x+y)-\zeta_k(x)||u(x+y)| \nu_t(dy)
  \\
  &=:J_{k1}^3 + J_{k2}^3 + J_{k3}^3 + J_{k4}^3
\end{align*}
Similar to \eqref{eq10110046}, \eqref{eq10111131}, \eqref{eq10101149}, and \eqref{eq10101529},
\begin{align*}
    \|J_{k1}^3\|_{L_p(\bR^d_T)} \leq Nc\|\nabla u\|_{L_p\left((0,T)\times B_{r_{k+3}}\right)}, \quad \|J_{k2}^3\|_{L_p(\bR^d_T)} \leq N c\frac{2^k}{R} \|u\|_{L_p((0,T)\times B_{R})},
\end{align*}
and
\begin{align*}
    \|J_{k4}^3\|_{L_p(\bR^d_T)} \leq N \frac{2^{k}}{R} \|u\|_{L_p((0,T)\times B_{R})} + N \frac{2^{k}}{R^{1-d/p}} \left( \int_0^T \left|\cT_{p,2^{-k-4}}^R u(t,0) \right|^p dt \right)^{1/p}.
\end{align*}
For $J_{k3}^3$, by \eqref{eq10022329},
\begin{align*}
      \|J_{k3}^3(t,\cdot)\|_{L_p(\bR^d)} &\leq N\frac{2^k}{R} \int_{\delta_k\leq |y| < \tilde{r}_k} \|u(t,\cdot+y)\|_{L_p\left(B_{r_{k+2}}\right)} |y| \nu_t(dy)
  \\
  &\leq N\frac{2^k}{R} \|u(t,\cdot)\|_{L_p\left(B_{r_{k+3}}\right)} \int_{\delta_k\leq |y| < \tilde{r}_k} |y| \nu_t(dy) 
    \\
  &\leq N\|u(t,\cdot)\|_{L_p\left(B_{R}\right)} \int_{B_{\delta_k}^c} \nu_t(dy) \leq Nc^{-1}\frac{2^{k}}{R} \|u(t,\cdot)\|_{L_p\left(B_{R}\right)}.
\end{align*}
Thus, by taking $c=\varepsilon^3/N$, we have the desired estimate.
The lemma is proved.
\end{proof}

Recall that
   \begin{equation*}
       \bT_{\kappa} (t)u(x) = \sup_{R>0} \kappa^\sigma R^{\sigma-d} \int_{B_{\kappa R}^c} \int_{B_{R}} |u(x+y+z)| dz\nu_t(dy).
   \end{equation*}

\begin{lemma}
Let $p\in(1,\infty)$ and $R>0$. Then we have
    \begin{equation} \label{eq11221518}
        \cT_{p,\kappa}^R u(t,x) \leq N(p,\Lambda) (\bT_\kappa(t)|u|^{p}(x))^{1/p}.
    \end{equation}
\end{lemma}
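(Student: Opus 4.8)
The plan is to reduce \eqref{eq11221518} to a single application of Hölder's inequality in the $y$-variable, taken with respect to the restriction of $\nu_t$ to $B_{\kappa R}^c$, which is a \emph{finite} measure by Assumption \ref{upper} (specifically \eqref{upper_mu}). First, by the change of variables $z\mapsto x+y+z$ inside the spatial norm,
\[
    \|u(t,\cdot)\|_{L_p(B_R(x+y))} = \Big(\int_{B_R}|u(t,x+y+z)|^p\,dz\Big)^{1/p}.
\]
If $u(t,\cdot)$ is not locally $p$-integrable the inequality is trivial, so we may assume the right-hand side is finite, and Tonelli's theorem justifies any interchange of the $z$- and $y$-integrations below.

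Next, with $p'=p/(p-1)$, Hölder's inequality with respect to $\nu_t$ on $B_{\kappa R}^c$ gives
\[
    \int_{B_{\kappa R}^c}\Big(\int_{B_R}|u(t,x+y+z)|^p\,dz\Big)^{1/p}\nu_t(dy)
    \le \Big(\int_{B_{\kappa R}^c}\int_{B_R}|u(t,x+y+z)|^p\,dz\,\nu_t(dy)\Big)^{1/p}\big(\nu_t(B_{\kappa R}^c)\big)^{1/p'}.
\]
By the definition of $\bT_\kappa$, taking this particular value of $R$ in the supremum, the first factor is bounded by $\big(\kappa^{-\sigma}R^{d-\sigma}\,\bT_\kappa(t)|u|^p(x)\big)^{1/p}$, and by \eqref{upper_mu} the second factor is bounded by $\Lambda^{1/p'}(\kappa R)^{-\sigma/p'}$.

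Finally, multiplying through by the prefactor $\kappa^\sigma R^{\sigma-d/p}$ from the definition of $\cT_{p,\kappa}^R$ and collecting exponents, the powers of $\kappa$ combine to $\sigma-\sigma/p-\sigma/p'=\sigma(1-1/p-1/p')=0$ and the powers of $R$ to $\sigma-d/p+(d-\sigma)/p-\sigma/p'=\sigma(1-1/p-1/p')=0$, so everything dimensional cancels and one is left with $\cT_{p,\kappa}^R u(t,x)\le \Lambda^{1/p'}\big(\bT_\kappa(t)|u|^p(x)\big)^{1/p}$, which is \eqref{eq11221518} with $N=\Lambda^{1/p'}$. There is no real obstacle here; the only points needing a word of care are the $\sigma$-finiteness of $\nu_t$, which forces Hölder to be applied on the finite measure $\nu_t$ restricted to $B_{\kappa R}^c$, and the bookkeeping that makes the scaling in $\kappa$ and $R$ drop out exactly.
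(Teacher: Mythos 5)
Your proof is correct and follows essentially the same route as the paper: one application of Hölder's inequality against $\nu_t$ restricted to $B_{\kappa R}^c$, the tail bound $\nu_t(B_{\kappa R}^c)\le\Lambda(\kappa R)^{-\sigma}$ from Assumption \ref{upper}, and identification of the first factor with the maximal operator $\bT_\kappa$. The exponent bookkeeping is correct and the constant $\Lambda^{1/p'}$ matches the paper's $N(p,\Lambda)$.
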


\begin{proof}
    By H\"older's inequality and \eqref{equiv},
    \begin{align*}
         &(\cT_{p,\kappa}^R u)(t,x) 
         \\
         &= \kappa^\sigma R^{\sigma-d/p} \int_{B_{\kappa R}^c} \|u(t,\cdot)\|_{L_p(B_R(x+y))} \nu_t(dy)
         \\
         &\leq \kappa^\sigma R^{\sigma-d/p} \left(\int_{B_{\kappa R}^c} \int_{B_R} |u(t,x+y+z)|^p dz \nu_t(dy) \right)^{1/p} \left( \int_{B_{\kappa R}^c} \nu_t(dy) \right)^{1/p'}
         \\
         &\leq N \left(\kappa^\sigma R^{-\sigma} \int_{B_{\kappa R}^c} \int_{B_R} |u(t,x+y+z)|^p dz \nu_t(dy) \right)^{1/p} \leq N (\bT_\kappa(t)|u|^{p}(x))^{1/p},
    \end{align*}
    where $p'=p/(p-1)$. The lemma is proved.
\end{proof}

   In Lemmas \ref{lem10111620}--\ref{lem_TLP}, we prove an $L_p$-boundedness of the operator $\bT_{\kappa} (t)u$. Our strategy is to first introduce a measure
   \begin{equation} \label{eq12091506}
       \mu_t(dy) := \nu_t(dy) + |y|^{-d-\sigma} dy,
   \end{equation}
   and construct from it a family of tail measures that satisfies the assumptions of \cite[Theorem A]{DR86} (see Lemma \ref{lem9251654}). We then prove that the boundedness of the associated maximal operator
   \begin{equation} \label{eq12091513}
       \bT_{\kappa}^\mu (t)u(x) := \sup_{R>0} \kappa^\sigma R^{\sigma-d} \int_{B_{\kappa R}^c} \int_{B_{R}} |u(x+y+z)| dz\mu_t(dy).
   \end{equation}
   This particular choice of $\mu_t(dy)$ is motivated by the fact that its lower bound
   \begin{equation} \label{eq12091414}
       \mu_t(B_r^c) \geq \int_{B_r^c} |y|^{-d-\sigma} dy \geq N(d,\sigma) r^{-\sigma}
   \end{equation}
   is necessary for the argument. We also notice that $\mu_t$ satisfies Assumption \ref{levy}.

\begin{lemma} \label{lem10111620}
Let $\sigma\in(0,2)$ and $r>0$. Suppose that $\nu_t$ satisfies Assumption \ref{upper}. Then, for each $t>0$,
    \begin{equation} \label{eq9251620}
         \frac{1}{\mu_t(B_r^c)} \left|\int_{|y|>r} \left(1-e^{i\xi\cdot y} \right) \mu_t(dy) \right| \leq Nr^{\sigma/2}|\xi|^{\sigma/2},
    \end{equation}
    where $\mu_t$ is defined by \eqref{eq12091506}, and $N=N(\sigma,\Lambda)$ is independent of $\xi$ and $r$.
\end{lemma}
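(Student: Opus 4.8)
The plan is to control the numerator and the denominator of the left-hand side of \eqref{eq9251620} separately. For the denominator I would simply invoke \eqref{eq12091414}: since $\mu_t$ dominates the isotropic $\sigma$-stable measure $|y|^{-d-\sigma}\,dy$, one has $\mu_t(B_r^c)\geq N(d,\sigma)\,r^{-\sigma}$. This is precisely why $\mu_t$, and not $\nu_t$ itself, is the object being estimated here: $\nu_t$ satisfies only the upper bound \eqref{upper_mu} and need not admit any matching lower bound. All constants appearing below are independent of $t$, so the resulting estimate will be uniform in $t>0$.

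For the numerator I would first record the elementary bound $|1-e^{i\theta}|=2|\sin(\theta/2)|\leq\min\{2,|\theta|\}$ for every $\theta\in\bR$, and hence $|1-e^{i\theta}|\leq 2^{1-\sigma/2}|\theta|^{\sigma/2}$ (write $\min\{2,|\theta|\}=\min\{2,|\theta|\}^{\sigma/2}\min\{2,|\theta|\}^{1-\sigma/2}$ and bound the two factors by $|\theta|^{\sigma/2}$ and $2^{1-\sigma/2}$, respectively). Applying this with $\theta=\xi\cdot y$ together with $|\xi\cdot y|\leq|\xi|\,|y|$ gives
\begin{equation*}
  \left|\int_{|y|>r}\bigl(1-e^{i\xi\cdot y}\bigr)\,\mu_t(dy)\right|
  \leq 2^{1-\sigma/2}|\xi|^{\sigma/2}\int_{B_r^c}|y|^{\sigma/2}\,\mu_t(dy).
\end{equation*}
To estimate the fractional moment on the right I would split $\mu_t=\nu_t+|y|^{-d-\sigma}\,dy$ as in \eqref{eq12091506}: for the $\nu_t$-part I use \eqref{eq8270021} with the admissible exponent $c=\sigma/2\in(0,\sigma)$, which yields $\int_{B_r^c}|y|^{\sigma/2}\nu_t(dy)\leq N(\sigma,\Lambda)\,r^{-\sigma/2}$, and for the Lebesgue part a one-line polar-coordinates computation gives $\int_{B_r^c}|y|^{\sigma/2-d-\sigma}\,dy=N(d,\sigma)\,r^{-\sigma/2}$. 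Hence $\int_{B_r^c}|y|^{\sigma/2}\,\mu_t(dy)\leq N\,r^{-\sigma/2}$, and therefore the numerator is at most $N|\xi|^{\sigma/2}r^{-\sigma/2}$.

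Dividing this by $\mu_t(B_r^c)\geq N(d,\sigma)r^{-\sigma}$ then produces $N\,r^{\sigma/2}|\xi|^{\sigma/2}$, which is exactly \eqref{eq9251620}. I do not anticipate any serious obstacle here; the only point requiring genuine care is the choice of the fractional exponent. It must lie strictly below $\sigma$ so that the moment bound \eqref{eq8270021} applies with decay $r^{c-\sigma}$, and then the discrepancy between the $r^{-\sigma/2}$ coming from the numerator and the $r^{-\sigma}$ coming from the denominator is precisely the exponent $r^{\sigma/2}$ in the statement. Any $c\in(0,\sigma)$ would give an analogous gain $r^{\sigma-c}$, but $c=\sigma/2$ is the symmetric choice that matches the factor $|\xi|^{\sigma/2}$, which is what is needed when verifying the hypotheses of \cite{DR86} in the lemmas that follow.
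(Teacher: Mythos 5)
Your proof is correct, and it takes a genuinely different and arguably cleaner route than the paper. The paper's own proof proceeds by first dispatching the trivial regime $r|\xi|\geq 1$, and in the remaining regime $r|\xi|<1$ splits the integral at the threshold $|y|=|\xi|^{-1}$, bounding $|1-e^{i\xi\cdot y}|$ by $|\xi||y|$ on the inner piece and by $2$ on the tail; the inner piece is then controlled by the small-ball moment bound \eqref{eq9251355} with exponent $c=1+\sigma/2>\sigma$. You instead avoid any case distinction by using the interpolated bound $|1-e^{i\theta}|\leq 2^{1-\sigma/2}|\theta|^{\sigma/2}$ up front, which converts the whole numerator in one stroke into $|\xi|^{\sigma/2}$ times the fractional tail moment $\int_{B_r^c}|y|^{\sigma/2}\mu_t(dy)$, controlled by \eqref{eq8270021} with the admissible exponent $c=\sigma/2<\sigma$. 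Both proofs then finish identically by invoking the lower bound $\mu_t(B_r^c)\geq N(d,\sigma)r^{-\sigma}$ from \eqref{eq12091414}. What the paper's route buys is a slightly sharper use of $|1-e^{i\theta}|$ on the good range of $y$; what yours buys is brevity and a single application of a single moment estimate. Your remark on \emph{why} $\mu_t$, rather than $\nu_t$, is the right object (the need for the matching lower bound) is a point the paper makes only implicitly in the surrounding text, and is worth making explicitly as you do. One cosmetic note: your constant (like the paper's) in fact depends on $d$ as well, through the lower bound \eqref{eq12091414} and the polar-coordinate computation; the paper's claimed dependence $N=N(\sigma,\Lambda)$ appears to omit this, but that is an issue with the statement, not with your argument.
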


\begin{proof}
    When $r|\xi|\geq1$, then it is obvious that
    \begin{equation*}
        \frac{1}{\mu_t(B_r^c)} \left|\int_{|y|>r} \left(1-e^{i\xi\cdot y} \right) \mu_t(dy)\right| \leq \frac{2}{\mu_t(B_r^c)} \int_{|y|>r} \mu_t(dy) = 2 \leq 2r^{\sigma/2}|\xi|^{\sigma/2}.
    \end{equation*}
    Thus, it suffices to consider the case when $r|\xi|<1$. Since $|1-e^{i\xi\cdot y}| \leq \min\{|\xi| |y|,1\}$, it follows from \eqref{eq9251355} with $\mu_t$ instead of $\nu_t$ that
    \begin{align*}
        \left|\int_{|y|>r} \left(1-e^{i\xi\cdot y} \right) \mu_t(dy)\right| &\leq |\xi| \int_{r<|y|<|\xi|^{-1}} |y| \mu_t(dy) + 2 \int_{|y|\geq |\xi|^{-1}} \mu_t(dy)
        \\
        &\leq |\xi| r^{-\sigma/2} \int_{|y|<|\xi|^{-1}} |y|^{1+\sigma/2} \mu_t(dy) + 2 \int_{|y|\geq |\xi|^{-1}} \mu_t(dy) 
        \\
        &\leq N r^{-\sigma/2}|\xi|^{\sigma/2}.
    \end{align*}
    It remains to apply \eqref{eq12091414}. The lemma is proved.
\end{proof}

\begin{lemma}
    Let $r>0$, and denote
    \begin{equation*}
        m^1_r(\xi) = \frac{1}{\omega_d r^d}\int_{\bR^d} e^{i\xi\cdot y} 1_{B_r}(y) dy
    \end{equation*}
    where $\omega_d$ is the volume of the $d$-dimensional unit ball. Then we have
    \begin{equation} \label{eq9242020}
        |m^1_r(\xi)-1| \leq N|r\xi|^2
    \end{equation}
    and
    \begin{equation} \label{eq9242021}
        |m^1_r(\xi)| \leq N|r\xi|^{-(d+1)/2},
    \end{equation}
    where $N=N(d)$.
\end{lemma}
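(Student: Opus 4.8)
The plan is to first remove the parameter $r$ by scaling. Substituting $y=rz$ gives
\[
  m^1_r(\xi)=\frac{1}{\omega_d}\int_{B_1}e^{i(r\xi)\cdot z}\,dz,
\]
so $m^1_r(\xi)$ depends on $(r,\xi)$ only through $\eta:=r\xi$, and, by rotational invariance of $B_1$, only through $s:=|\eta|=|r\xi|$. Thus it is enough to estimate $F(\eta):=\int_{B_1}e^{i\eta\cdot z}\,dz$.

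For \eqref{eq9242020} I would use an elementary Taylor argument. Since $\int_{B_1}(\eta\cdot z)\,dz=0$ by oddness,
\[
  m^1_r(\xi)-1=\frac{1}{\omega_d}\int_{B_1}\bigl(e^{i\eta\cdot z}-1-i\,\eta\cdot z\bigr)\,dz,
\]
and the elementary inequality $|e^{i\theta}-1-i\theta|\le\theta^2/2$ (valid for all real $\theta$, from the integral form of Taylor's remainder) gives
\[
  |m^1_r(\xi)-1|\le\frac{1}{2\omega_d}\int_{B_1}|\eta\cdot z|^2\,dz\le\frac{|\eta|^2}{2\omega_d}\int_{B_1}|z|^2\,dz\le\tfrac12|r\xi|^2.
\]
This holds for every $\xi$, so \eqref{eq9242020} follows with $N=1/2$.

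For \eqref{eq9242021} I would split on the size of $s=|r\xi|$. When $s\le1$ the bound is immediate since $|m^1_r(\xi)|\le\omega_d^{-1}|B_1|=1\le s^{-(d+1)/2}$. When $s>1$ I would invoke the classical identity $F(\eta)=(2\pi)^{d/2}|\eta|^{-d/2}J_{d/2}(|\eta|)$ for the Fourier transform of the indicator of a ball (see e.g. \cite[Appendix B]{G14}; the factor $(2\pi)^{d/2}$ reflects the $2\pi$-free Fourier normalization used here, and is consistent with the case $d=1$, where $F(\eta)=2\sin|\eta|/|\eta|$ and $J_{1/2}(s)=\sqrt{2/(\pi s)}\sin s$), together with the standard large-argument bound $|J_\nu(s)|\le C_\nu s^{-1/2}$ for $s\ge1$. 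This yields
\[
  |m^1_r(\xi)|=\frac{(2\pi)^{d/2}}{\omega_d}|r\xi|^{-d/2}\,|J_{d/2}(|r\xi|)|\le N(d)\,|r\xi|^{-(d+1)/2},
\]
which is \eqref{eq9242021}.

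There is no genuine obstacle here: both bounds are classical, and the only point requiring care is keeping track of the Fourier normalization in the Bessel-function identity. If one prefers to avoid Bessel asymptotics, the decay for $s>1$ can instead be obtained by rotating so that $\eta=(s,0,\dots,0)$, writing $F(\eta)=\omega_{d-1}\int_{-1}^{1}e^{ist}(1-t^2)^{(d-1)/2}\,dt$, and estimating this scalar integral by localizing near $t=\pm1$, where the amplitude vanishes to order $(d-1)/2$, so that integration by parts (or van der Corput) produces the decay $s^{-((d-1)/2+1)}=s^{-(d+1)/2}$, while the contribution away from $t=\pm1$ decays faster than any power of $s$. I would present the Bessel version for brevity.
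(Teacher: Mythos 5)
Your proof is correct and takes essentially the same route as the paper: the scaling reduction to $r=1$, the symmetry-cancellation of the first-order term followed by a second-order Taylor bound for \eqref{eq9242020} (you write it via $|e^{i\theta}-1-i\theta|\le\theta^2/2$, the paper via $|e^{i\theta}+e^{-i\theta}-2|\le\theta^2$; these are the same idea), and the Bessel identity $m^1_1(\xi)=(2\pi)^{d/2}\omega_d^{-1}|\xi|^{-d/2}J_{d/2}(|\xi|)$ together with the $|\xi|^{-1/2}$ decay of $J_{d/2}$ for \eqref{eq9242021}. Your explicit split into $|r\xi|\le1$ and $|r\xi|>1$ is a slightly more careful way to handle the small-frequency regime than the paper's terse citation of the asymptotic, but the content is identical.
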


\begin{proof}
Since this result is classical, we only give an outline of the proof.

By a change of variables, one can observe that $m^1_r(\xi) = m^1_1(r\xi)$. Thus, we only need to consider the case when $r=1$.

First, we prove \eqref{eq9242020}. Since $1_{B_r(y)}dy$ is symmetric,
\begin{equation*}
    m_1^1(\xi) -1 = \frac{1}{\omega_d} \int_{B_1} \frac{1}{2}\left( e^{i\xi\cdot y} + e^{-i\xi\cdot y} -2 \right) dy.
\end{equation*}
Thus, by using $\left| e^{i\xi\cdot y} + e^{-i\xi\cdot y} -2 \right| \leq |\xi\cdot y|^2 \leq |\xi|^2|y|^2$, we deduce
\begin{equation*}
    |m_1^1(\xi)-1| \leq \frac{|\xi|^2}{2\omega_d} \int_{B_1} y^2 dy \leq N|\xi|^2.
\end{equation*}
Hence, \eqref{eq9242020} is obtained.

Next, we consider \eqref{eq9242021}.
    The Fourier transform of the unit ball is given by
    \begin{equation} \label{eq9242036}
        m^1_1(\xi) = \frac{(2\pi)^{d/2}}{\omega_d} \frac{J_{d/2}(|\xi|)}{|\xi|^{d/2}} = 2^{d/2}\Gamma(d/2+1) \frac{J_{d/2}(|\xi|)}{|\xi|^{d/2}},
    \end{equation}
    where $J_{d/2}$ is the Bessel function of order $d/2$, which is defined as
    \begin{equation*}
        J_{d/2}(z)=\sum_{j=0}^\infty \frac{(-1)^j}{j!}\frac{1}{\Gamma(j+d/2+1)}\left(\frac{z}{2}\right)^{2j+d/2}.
    \end{equation*}
Then \eqref{eq9242021} easily follows from a well-known asymptotic behavior
    \begin{equation} \label{eq9242037}
        |J_{d/2}(|\xi|)| \leq |\xi|^{-1/2}.
    \end{equation}
As a final remark, we refer the reader to \cite[Appendices B.5 and B.7]{G14} for \eqref{eq9242036} and \eqref{eq9242037}. The lemma is proved.
\end{proof}

The following is taken from \cite[Theorem A]{DR86}.

\begin{lemma} \label{lem9251654}
    Let $p\in(1,\infty]$, $a,C>0$, and $\{\mu_k\}_{k=-\infty}^\infty$ be a sequence of nonnegative Borel measures in $\bR^d$ such that $\|\mu_k\|=1$. Assume that $a_k>0$ and
    \begin{equation*}
        c:=\inf_{k\in\bZ}\frac{a_{k+1}}{a_k}>1.
    \end{equation*}
    Suppose that
    \begin{align*}
        |\widehat{\mu}_k(\xi) - 1| &\leq C|a_{k+1}\xi|^a,
        \\
        |\widehat{\mu}_k(\xi)| &\leq C|a_{k}\xi|^{-a}.
    \end{align*}
    Then, the maximal operator $Tu:=\sup_{k\in\bZ}|u*\mu_k|$ is bounded in $L_p(\bR^d)$. Moreover,
    \begin{equation*}
        \|Tu\|_{L_p(\bR^d)} \leq N \|u\|_{L_p(\bR^d)},
    \end{equation*}
    where $N=N(a,c,d,p,C)$.
\end{lemma}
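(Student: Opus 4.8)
The final statement is exactly \cite[Theorem A]{DR86}, so the most economical route is simply to invoke that reference; for completeness I sketch how one would prove it directly, following the Fourier-transform method of Duoandikoetxea and Rubio de Francia. The case $p=\infty$ is immediate, since $|u*\mu_k(x)|\leq\|u\|_{L_\infty(\bR^d)}\|\mu_k\|=\|u\|_{L_\infty(\bR^d)}$, so I would assume $p\in(1,\infty)$ throughout. The plan has three stages: (1) peel off an approximate identity to reduce to mean-zero pieces whose Fourier transforms enjoy a two-sided power bound; (2) prove the $L_2$ bound via the Plancherel theorem, exploiting the lacunarity of $\{a_k|\xi|\}_k$; and (3) upgrade to $L_p$ through a frequency decomposition adapted to the scales $a_k$ together with Littlewood--Paley theory.

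For stage (1), I would fix $\phi\in\cS(\bR^d)$ with $\widehat{\phi}(0)=1$ and $\widehat{\phi}$ supported in a neighbourhood of the origin, set $\phi_k(x):=a_k^{-d}\phi(a_k^{-1}x)$, and write $\mu_k=\nu_k+\phi_k$. Then $\sup_k|u*\phi_k|\leq N\,Mu$, where $M$ is the Hardy--Littlewood maximal function, which is bounded on $L_p(\bR^d)$; so it suffices to estimate $\sup_k|u*\nu_k|$. Combining the two hypotheses with the smoothness and rapid decay of $\widehat{\phi}$ and with $a_{k+1}\geq c\,a_k$, one checks that $\widehat{\nu}_k(0)=0$ and $|\widehat{\nu}_k(\xi)|\leq N\min\{|a_{k+1}\xi|^{b},|a_k\xi|^{-a}\}$ for some $b=b(a)>0$. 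For stage (2), since $\sup_k|u*\nu_k|\leq(\sum_k|u*\nu_k|^2)^{1/2}$, the Plancherel theorem gives $\|\sup_k|u*\nu_k|\|_{L_2(\bR^d)}^2\leq\int_{\bR^d}(\sum_k|\widehat{\nu}_k(\xi)|^2)|\widehat{u}(\xi)|^2\,d\xi$, and the power bound from stage (1) together with $\inf_k a_{k+1}/a_k\geq c>1$ yields $\sum_k|\widehat{\nu}_k(\xi)|^2\leq N$ uniformly in $\xi$, which is the $L_2$ estimate.

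For stage (3), I would take a Littlewood--Paley partition $\{S_j\}_{j\in\bZ}$ with $\widehat{S_jf}$ supported where $|\xi|\sim2^j$, choose $\ell(k)\in\bZ$ with $2^{\ell(k)}\leq a_k<2^{\ell(k)+1}$ (so $k\mapsto\ell(k)$ is increasing with bounded overlap because $c>1$), and split $u*\nu_k=\sum_{m\in\bZ}(S_{m-\ell(k)}u)*\nu_k$. Setting $T^{(m)}u:=\sup_k|(S_{m-\ell(k)}u)*\nu_k|$, on the frequency support of $S_{m-\ell(k)}$ one has $|a_k\xi|\sim2^m$, so the power bound forces $|\widehat{\nu}_k(\xi)|\leq N2^{-\delta|m|}$ with $\delta=\min\{a,b\}$; feeding this into the square-function argument of stage (2) and using the bounded overlap of the projections gives $\|T^{(m)}\|_{L_2(\bR^d)\to L_2(\bR^d)}\leq N2^{-\delta|m|}$. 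On the other hand, since $\mu_k\geq0$ and $\|\nu_k\|\leq N$, dominating $\sup_k$ by an $\ell^2$-sum and invoking the vector-valued Fefferman--Stein maximal inequality together with the Littlewood--Paley square-function estimate yields $\|T^{(m)}\|_{L_p(\bR^d)\to L_p(\bR^d)}\leq N(1+|m|)$ for every $p\in(1,\infty)$. Interpolating between these two bounds gives geometric decay $\|T^{(m)}\|_{L_p(\bR^d)\to L_p(\bR^d)}\leq N2^{-\delta'|m|}$ for some $\delta'>0$, and summing over $m\in\bZ$ completes the proof. I expect the main obstacle to be this last stage: one needs an $L_p$ bound for each $T^{(m)}$ that grows at most polynomially in $m$ while the $L_2$ bound decays geometrically, so that interpolation and summation close --- this is precisely where Littlewood--Paley theory and the vector-valued maximal inequality enter, and where one must handle the fact that general measures $\mu_k$ carry no decay beyond their total mass.
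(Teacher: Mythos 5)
Your primary answer --- simply cite \cite[Theorem A]{DR86} --- is exactly what the paper does: the lemma appears in the appendix preceded only by the line ``The following is taken from [Theorem A]{DR86},'' with no proof given. Your supplementary sketch is sound through stages (1) and (2), but stage (3) has a genuine gap. You assert that $\|T^{(m)}\|_{L_p\to L_p}\leq N(1+|m|)$ for all $p\in(1,\infty)$ follows from the vector-valued Fefferman--Stein maximal inequality; this cannot work, because convolution against a general probability measure $\mu_k$ is not pointwise dominated by the Hardy--Littlewood maximal function (the uniform measure on $S^{d-1}$ is the standard counterexample --- this is precisely why the spherical maximal operator is nontrivial), so the $\mu_k$-part of $\nu_k=\mu_k-\phi_k$ is not controlled by your argument. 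What Duoandikoetxea and Rubio de Francia actually do is a bootstrap in $p$: starting from the trivial $L_\infty$ bound on $\mu^*f:=\sup_k(\mu_k*|f|)$, they use it, together with interpolation between $\ell^1$-valued and $\ell^\infty$-valued estimates, to obtain the vector-valued bound $\|(\sum_k|\nu_k*f_k|^2)^{1/2}\|_{L_p}\leq N\|(\sum_k|f_k|^2)^{1/2}\|_{L_p}$ for $p$ in a neighborhood of $2$; combining this with the Plancherel and Littlewood--Paley estimates bounds the square function $(\sum_k|\nu_k*f|^2)^{1/2}$, and hence $\mu^*$, on that range of $p$; then one iterates, feeding the improved boundedness of $\mu^*$ back into the vector-valued step, to exhaust all of $(1,\infty)$. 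Your frequency decomposition into the operators $T^{(m)}$ is the right skeleton, but without this bootstrap the stage-(3) $L_p$ estimate cannot be closed.
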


\begin{lemma} \label{lem_TLP}
    Let $p\in(1,\infty)$, $u\in L_p(\bR^d)$, $t\in(0,T)$, and $\kappa\in(0,1)$. Suppose that $\nu_t$ satisfies Assumption \ref{upper}. Then, we have
  \begin{equation*}
      \|\bT_{\kappa}(t)u\|_{L_p(\bR^d)} \leq N\|u\|_{L_p(\bR^d)},
  \end{equation*}
  where $N=N(d,p,\sigma,\Lambda)$ is independent of $\kappa$.
\end{lemma}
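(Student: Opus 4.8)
The plan is to dominate $\bT_\kappa(t)u$ by the operator $\bT_\kappa^\mu(t)u$ from \eqref{eq12091513} via $\nu_t\le\mu_t$, and then to realize $\bT_\kappa^\mu(t)u$ as a maximal convolution operator covered by Lemma~\ref{lem9251654}. Since $\nu_t(dy)\le\mu_t(dy)$, we have $\bT_\kappa(t)u\le\bT_\kappa^\mu(t)u$, so it suffices to bound the latter. Because $1_{B_R}$ and $1_{B_{\kappa R}^c}$ are radial, after passing to the reflected measure $\mu_t^*(dy):=\mu_t(-dy)$ — which again satisfies Assumption~\ref{upper} and all the estimates below, the relevant conditions being symmetric under $y\mapsto -y$ — one can write
\[
\kappa^\sigma R^{\sigma-d}\int_{B_{\kappa R}^c}\int_{B_R}|u(x+y+z)|\,dz\,\mu_t(dy)=\bigl(|u|*K_R\bigr)(x),\qquad K_R:=\kappa^\sigma R^{\sigma-d}\,1_{B_R}*\bigl(1_{B_{\kappa R}^c}\mu_t^*\bigr),
\]
so that $\bT_\kappa^\mu(t)u(x)=\sup_{R>0}(|u|*K_R)(x)$.

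Next I would discretize and normalize. For $k\in\bZ$ set $r_k:=\kappa 2^k$, $\rho_k:=2^{k+1}$, and
\[
\mu_k:=c_k\,\kappa^\sigma 2^{k(\sigma-d)}\,1_{B_{\rho_k}}*\bigl(1_{B_{r_k}^c}\mu_t^*\bigr),\qquad c_k:=\bigl(\kappa^\sigma 2^{k(\sigma-d)}\omega_d\rho_k^d\,\mu_t(B_{r_k}^c)\bigr)^{-1},
\]
which is a nonnegative measure with $\widehat{\mu_k}(0)=1$, hence of total mass $1$. Combining $\nu_t(B_r^c)\le\Lambda r^{-\sigma}$ with the lower bound \eqref{eq12091414} shows that $c_k^{-1}$ lies in a fixed interval $[N_1,N_2]$ with $N_1=N_1(d,\sigma)$ and $N_2=N_2(d,\sigma,\Lambda)$, independent of $k$ and $\kappa$; this is precisely why the extra summand $|y|^{-d-\sigma}dy$ is built into $\mu_t$, so that the normalization does not degenerate. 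For an arbitrary $R>0$, choosing $k$ with $2^k\le R<2^{k+1}$ and using $1_{B_R}\le 1_{B_{\rho_k}}$, $1_{B_{\kappa R}^c}\le 1_{B_{r_k}^c}$, and $R^{\sigma-d}\le 2\cdot 2^{k(\sigma-d)}$, one obtains $K_R\le N(d,\sigma,\Lambda)\,\mu_k$, whence $\bT_\kappa^\mu(t)u(x)\le N\sup_{k\in\bZ}(|u|*\mu_k)(x)$.

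The third step is to compute $\widehat{\mu_k}$ and check the hypotheses of Lemma~\ref{lem9251654}. Using $\widehat{1_{B_{\rho_k}}}(\xi)=\omega_d\rho_k^d\,m^1_{\rho_k}(\xi)$, the identity $\widehat{1_{B_{r_k}^c}\mu_t^*}(\xi)=\int_{|y|>r_k}e^{i\xi\cdot y}\mu_t(dy)$, and the fact that the prefactor equals $c_k^{-1}c_k=1$, we get
\[
\widehat{\mu_k}(\xi)=m^1_{\rho_k}(\xi)\,q_k(\xi),\qquad q_k(\xi):=\frac{1}{\mu_t(B_{r_k}^c)}\int_{|y|>r_k}e^{i\xi\cdot y}\mu_t(dy),
\]
where $|q_k|\le 1$ and, by Lemma~\ref{lem10111620}, $|q_k(\xi)-1|\le N(\sigma,\Lambda)\,|r_k\xi|^{\sigma/2}$. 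From $|m^1_{\rho_k}|\le 1$, $r_k<\rho_k=2\cdot 2^k$, \eqref{eq9242020}, and \eqref{eq9242021}, a short case distinction according to whether $\rho_k|\xi|\le 1$ or $\rho_k|\xi|>1$ gives
\[
|\widehat{\mu_k}(\xi)-1|\le N\,|2^k\xi|^{\sigma/2},\qquad |\widehat{\mu_k}(\xi)|\le N\,|2^k\xi|^{-\sigma/2},
\]
with $N=N(d,\sigma,\Lambda)$ (for the decay estimate one uses $(d+1)/2\ge\sigma/2$, valid since $d\ge1$ and $\sigma<2$). These are exactly the assumptions of Lemma~\ref{lem9251654} with $a=\sigma/2$, $a_k=2^k$, $c=2$, so $\sup_k|\,\cdot\,*\mu_k|$ is bounded on $L_p(\bR^d)$ with norm $N(d,p,\sigma,\Lambda)$; combined with the previous steps this yields $\|\bT_\kappa(t)u\|_{L_p(\bR^d)}\le\|\bT_\kappa^\mu(t)u\|_{L_p(\bR^d)}\le N\|u\|_{L_p(\bR^d)}$.

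The main obstacle is establishing these two Fourier estimates with constants independent of $\kappa$. The subtlety is that $q_k=\widehat{1_{B_{r_k}^c}\mu_t^*}/\mu_t(B_{r_k}^c)$ need not decay at all — for atomic $\nu_t$, as in Example~(3), $q_k$ does not tend to $0$ as $|\xi|\to\infty$ — so all of the polynomial decay of $\widehat{\mu_k}$ has to be extracted from the smoothing factor $m^1_{\rho_k}$ through \eqref{eq9242021}, while the $\kappa$-dependence is absorbed entirely by normalizing $\mu_k$ to unit mass, a step that is legitimate only thanks to the lower bound \eqref{eq12091414}.
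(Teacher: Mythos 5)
Your proof is correct and follows essentially the same route as the paper: dominate $\nu_t$ by $\mu_t=\nu_t+|y|^{-d-\sigma}dy$, discretize the scale $R$, normalize to unit-mass measures $\mu_k=\mu_k^1*(\mu_k^2)^*$ (the paper writes $\mu_j^1*\mu_j^2$ but, as you note, a reflection of the tail measure is formally needed to express the operator as an honest convolution; all estimates are reflection-invariant so this is harmless), derive the two Fourier bounds via Lemma~\ref{lem10111620} and the ball Fourier-transform estimates, and invoke Lemma~\ref{lem9251654} with $a_k=2^k$ and $a=\sigma/2=\min\{\sigma/2,(d+1)/2\}$. The one step you omit and the paper includes is verifying that $\bT_\kappa(t)u$ is measurable (the paper checks left-continuity in $R$ and uses dominated convergence); your discretization yields a measurable majorant $\sup_k(|u|*\mu_k)$ but does not by itself show measurability of the uncountable supremum $\bT_\kappa(t)u$, so this should be added for completeness.
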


\begin{proof}
Since $t>0$ is fixed, we omit $t$; for instance, $\bT_{\kappa} u=\bT_{\kappa}(t)u$.
First, we show that $\bT_{\kappa} u$ is measurable. Note that for each $R>0$, by H\"older's inequality and \eqref{equiv},
\begin{align*}
   &\kappa^\sigma R^{\sigma-d} \int_{B_{\kappa R}^c} \int_{B_R} |u(x+y+z)| dz \nu_t(dy) 
   \\
   &\leq \kappa^\sigma R^{\sigma-d/p} \int_{B_{\kappa R}^c} \left(\int_{B_R} |u(x+y+z)|^p dz\right)^{1/p} \nu_t(dy)
   \\
   &\leq N \kappa^\sigma R^{-d/p} \|u\|_{L_{p}(\bR^d)} <\infty.
\end{align*}
Thus, since $R\to \nu_t(B_{\kappa R}^c)$ is left continuous, by the dominated convergence theorem,
\begin{equation*}
    R\to \kappa^\sigma R^{\sigma-d} \int_{B_{\kappa R}^c} \int_{B_R} |u(x+y+z)| dz \nu_t(dy)
\end{equation*}
is left continuous, which yields the desired claim.

Since $\nu_t(dy) \leq \mu_t(dy)$ where $\mu_t(dy)$ is defined by \eqref{eq12091506}, it suffices to show that
  \begin{equation*}
      \|\bT_{\kappa}^\mu(t)u\|_{L_p(\bR^d)} \leq N\|u\|_{L_p(\bR^d)}.
  \end{equation*}
Let $R>0$ be given, and take $j\in\bZ$ so that $2^{j}<R \leq 2^{j+1}$. Then,
\begin{equation} \label{eq9251548}
    R^{-d}\int_{B_{R}} |u(x+y+z)| dz \leq N \aint_{B_{2^{j+1}}} |u(x+y+z)| dz,
\end{equation}
where $N$ is independent of $R$. Note that by \eqref{upper_mu}, one can easily show that
\begin{equation*}
    \mu_t(B_r^c) \leq N(d,\sigma,\Lambda) r^{-\sigma}.
\end{equation*}
Using this, for any function $h$,
\begin{align*}
    \kappa^{\sigma}R^{\sigma} \int_{B_{\kappa R}^c} h(y) \mu_t(dy) \leq N\kappa^{\sigma}2^{j\sigma} \int_{B_{\kappa 2^j}^c} h(y) \mu_t(dy) \leq N\aint_{B_{\kappa 2^{j}}^c} h(y) \mu_t(dy).
\end{align*}
This together with \eqref{eq9251548} leads to
\begin{align} \label{eq9252331}
    &\kappa^{\sigma}R^{\sigma-d} \int_{B_{\kappa R}^c} \int_{B_{R}} |u(x+y+z)| dz\mu_t(dy) \nonumber
    \\
    &\leq  N \aint_{B_{\kappa 2^{j}}^c} \aint_{B_{2^{j+1}}} |u(x+y+z)| dz \mu_t(dy) =: N\cT_\kappa^j u(x).
\end{align}

Let
\begin{equation*}
    \mu^1_j(dz) := \frac{1}{\omega_d2^{(j+1)d}}1_{B_{2^{j+1}}}(z) dz, \quad  \mu^2_j(dy) := \frac{1}{\mu_t(B_{\kappa 2^{j}}^c)}1_{B_{\kappa 2^{j}}^c}(y)\mu_t(dy)
\end{equation*}
be two Borel measures in $\bR^d$ such that $\|\mu^1_j\|=\|\mu^2_j\|=1$. Then $\cT_\kappa^j u$ can be represented as
\begin{equation*}
    \cT_\kappa^j u(x)= u*\mu_{j}(x),
\end{equation*}
where $\mu_{j}=\mu^1_j*\mu^2_{j}$ denotes the convolution of the two measures. Let $a:=\min\{\sigma/2,(d+1)/2\}$. Then by \eqref{eq9251620} and \eqref{eq9242020}, when $2^j|\xi|<1$,
\begin{align*}
    |\widehat{\mu}_{j}(\xi)-1| &\leq |\widehat{\mu}_{j}^2(\xi)||\widehat{\mu}_j^1(\xi)-1| + |\widehat{\mu}_{j}^2(\xi)-1| 
    \\
    &\leq N2^{2j}|\xi|^2 + N\kappa^{\sigma/2}2^{j\sigma/2}|\xi|^{\sigma/2} \leq N2^{j\sigma/2}|\xi|^{\sigma/2} \leq N|2^{j+1}\xi|^{a}.
\end{align*}
Note that we used the condition $\kappa<1$.
When $2^j|\xi|\geq1$, one can just use $\|\mu_{k,j}\|=\|\mu^1_j\| \times \|\mu^2_{j}\|=1$ to obtain that
\begin{equation*} 
    |\widehat{\mu}_{j}(\xi)-1| \leq 2 \leq N|2^{j+1}\xi|^{a}.
\end{equation*}
Thus, for any $\xi\in\bR^d$, we have
\begin{equation} \label{eq9251649}
    |\widehat{\mu}_{j}(\xi)-1| \leq N|2^{j+1}\xi|^a.
\end{equation}
By \eqref{eq9242021}, one can easily show that
\begin{equation} \label{eq9251650}
     |\widehat{\mu}_{j}(\xi)|= |\widehat{\mu}_j^1(\xi) \widehat{\mu}_{j}^2(\xi)| \leq |\widehat{\mu}_j^1(\xi)| \leq \min\{1,N|2^{j}\xi|^{-(d+1)/2}\} \leq N|2^{j}\xi|^{-a}.
\end{equation}
By \eqref{eq9251649} and \eqref{eq9251650}, we can apply Lemma \ref{lem9251654} with $a_j=2^j$, which leads to
\begin{equation*}
    \left\|\sup_{j\in\bZ} \cT_\kappa^ju \right\|_{L_p(\bR^d)} \leq N \|u\|_{L_p(\bR^d)},
\end{equation*}
where $N$ is independent of $\kappa$. This, \eqref{eq12091513}, and \eqref{eq9252331} yield the desired result.
The lemma is proved.
\end{proof}

\begin{remark}
    It follows from Lemma \ref{lem_TLP} that for the boundedness of the operator $\bT_\kappa(t)$ does not require the nondegenerate condition \eqref{nonde}.
\end{remark}

The following is a Sobolev embedding theorem, which is taken from \cite[Lemma A.6]{DL23}.

\begin{lemma} \label{lem11072224}
    Let $\sigma\in(0,2)$, $T\in(0,\infty], p\in (1,\infty),$ and $u\in \bH_{p,0}^\sigma(T)$.

        (i) Suppose that $p<d/\sigma+1$, and take $q\in(p,\infty)$ such that
    \begin{equation} \label{eq11072207}
        1/q=1/p-\sigma/(d+\sigma).
    \end{equation}
    Then for any $l\in [p,q]$,
    \begin{equation*}
        \|u\|_{L_l(\bR^d_T)} \leq N\|u\|_{\bH_{p}^\sigma(T)},
    \end{equation*}
    where $N=N(d,\sigma,p,l,T)$.

    (ii) If $p=d/\sigma+1$, then the same estimate holds with $l\in[p,\infty)$. 

    (iii) If $p>d/\sigma+1$, then for $\tau:=\sigma-(d+\sigma)/p$,
        \begin{equation*}
        \|u\|_{C^{\tau/\sigma,\tau}(\bR^d_T)} \leq N\|u\|_{\bH_{p}^\sigma(T)},
    \end{equation*}
    where $N=N(d,\sigma,p,T)$.
\end{lemma}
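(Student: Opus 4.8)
The plan is to represent $u$ through the resolvent of the model operator $\partial_t+1+(-\Delta)^{\sigma/2}$ and then read off the embedding from kernel estimates for the rotationally invariant $\sigma$-stable semigroup. First I would reduce to a function compactly supported in time: extend $u$ by zero to $\{t<0\}$ and multiply by a smooth cutoff $\eta(t)$ with $\eta\equiv 1$ on $[0,T]$ and $\operatorname{supp}\eta\subset[-1,T+1]$; then $v:=\eta u\in\bH_{p,0}^{\sigma}(-1,T+1)$ coincides with $u$ on $\bR^{d}_{T}$, and using the equivalence $\|w\|_{H_p^\sigma(\bR^d)}\approx\|w\|_{L_p}+\|(-\Delta)^{\sigma/2}w\|_{L_p}$ together with $\|u(t,\cdot)\|_{L_p}\le T\,\bM_t(\|\partial_t u(\cdot,\cdot)\|_{L_p(\bR^d)})(t)$ one gets $\|v\|_{L_p(\bR^{d+1})}+\|(-\Delta)^{\sigma/2}v\|_{L_p(\bR^{d+1})}+\|\partial_t v\|_{L_p(\bR^{d+1})}\le N(T)\|u\|_{\bH_p^\sigma(T)}$. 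Setting $g:=\partial_t v+v+(-\Delta)^{\sigma/2}v\in L_p(\bR^{d+1})$, the variation–of–constants formula (valid since $v$ is smooth and compactly supported in $t$) gives
\[
v(t,x)=\int_{-\infty}^{t}\!\!\int_{\bR^d}\Gamma(t-s,x-y)\,g(s,y)\,dy\,ds,\qquad \Gamma(r,z):=1_{r>0}\,e^{-r}p_r(z),
\]
where $p_r$ is the density with $\widehat{p_r}(\xi)=e^{-r|\xi|^{\sigma}}$. Thus all three conclusions reduce to mapping properties of convolution with $\Gamma$ on $\bR^{d+1}$ under the parabolic scaling $r\leftrightarrow|z|^{\sigma}$.

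The second step collects the kernel bounds. By scaling $p_r(z)=r^{-d/\sigma}p_1(r^{-1/\sigma}z)$ and the decay $p_1(z)\lesssim(1+|z|)^{-d-\sigma}$, $|\nabla p_1(z)|\lesssim(1+|z|)^{-d-1-\sigma}$, one has $p_1\in L_m(\bR^d)$ for every $m\in[1,\infty]$, hence $\|p_r\|_{L_m}\approx r^{-(d/\sigma)(1-1/m)}$, $\|\nabla p_r\|_{L_m}\approx r^{-1/\sigma}\|p_r\|_{L_m}$, and, via $\partial_r p_r=-(-\Delta)^{\sigma/2}p_r$, $\|\partial_r p_r\|_{L_m}\approx r^{-1}\|p_r\|_{L_m}$; consequently $\|p_r-p_r(\cdot-h)\|_{L_m}\lesssim r^{-(d/\sigma)(1-1/m)}\min\{1,|h|r^{-1/\sigma}\}$ and $\|\Gamma(r+\delta,\cdot)-\Gamma(r,\cdot)\|_{L_m}\lesssim e^{-r/2}r^{-(d/\sigma)(1-1/m)}\min\{1,\delta/r\}$.

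For part (i), Minkowski's inequality gives $\|v(t,\cdot)\|_{L_l(\bR^d)}\le(\Phi*h)(t)$ with $h(s):=\|g(s,\cdot)\|_{L_p(\bR^d)}$ and $\Phi(r):=1_{r>0}e^{-r}\|p_r\|_{L_m}$, where $1+1/l=1/m+1/p$; the choice \eqref{eq11072207} forces $1-1/m=\sigma/(d+\sigma)$, so $\Phi(r)\approx e^{-r}r^{-d/(d+\sigma)}$, which lies in weak $L_{(d+\sigma)/d}(\bR)$ but not in $L_{(d+\sigma)/d}(\bR)$ — the critical case. The weak-type Young (one–dimensional Hardy–Littlewood–Sobolev) inequality then yields $\|\Phi*h\|_{L_q(\bR)}\lesssim\|h\|_{L_p(\bR)}$ with $1/q=1/p-\sigma/(d+\sigma)$, and taking the $L_q$-norm in $t$ of the pointwise bound gives $\|v\|_{L_q(\bR^{d+1})}\lesssim\|g\|_{L_p}\lesssim\|u\|_{\bH_p^\sigma(T)}$; the range $l\in[p,q]$ follows by interpolation with the trivial $L_p$ bound. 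For part (ii), the endpoint $q=\infty$ is genuinely excluded, but for any finite $l>p$ one has $1/p-1/l<\sigma/(d+\sigma)$, so $\Phi\in L_r(\bR)$ with $1+1/l=1/r+1/p$, $r\ge1$, and classical Young suffices. For part (iii), since $1/p<\sigma/(d+\sigma)$ one has $\Gamma\in L_{p'}((0,\infty)\times\bR^d)$, so Hölder's inequality in $(s,y)$ gives $\|v\|_{L_\infty(\bR^{d+1})}\lesssim\|g\|_{L_p}$; applying Hölder to $v(t,x)-v(t,x')$ and to $v(t,x)-v(t',x)$ and inserting the difference bounds, the resulting one–dimensional $r$–integrals are estimated by splitting at $r\sim|x-x'|^{\sigma}$, resp. $r\sim|t-t'|$, each piece producing the critical power $|x-x'|^{\tau}$, resp. $|t-t'|^{\tau/\sigma}$, with $\tau=\sigma-(d+\sigma)/p$.

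The main obstacle is the borderline nature of all these convolution estimates: in part (i) the kernel $\Phi$ sits exactly on the line $L_{(d+\sigma)/d}$, forcing the use of the weak-type Young / HLS inequality rather than ordinary Young, and in part (iii) each $r$–integral after the dyadic split is scale–invariant at the endpoint, so the two pieces contribute the same power and no decay is to spare. Everything else — the time–cutoff reduction, the variation–of–constants representation, and the stable-kernel bounds via scaling and the $|z|^{-d-\sigma}$ tail — is routine. (Alternatively, one could bypass the explicit kernel by identifying $\bH_p^\sigma(\bR^{d+1})$, after an anisotropic dilation, with a parabolic Triebel–Lizorkin space and quoting the standard Sobolev and Hölder–Zygmund embeddings for those spaces.)
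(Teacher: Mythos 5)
The paper does not actually prove this lemma: it is stated with the attribution ``taken from \cite[Lemma A.6]{DL23}'' and no proof is supplied, so your proposal is the only argument on the table. Your route---Duhamel's formula with the damped stable kernel $\Gamma(r,z)=1_{r>0}e^{-r}p_r(z)$, the scaling $p_r(z)=r^{-d/\sigma}p_1(r^{-1/\sigma}z)$ together with the polynomial tail $p_1(z)\lesssim(1+|z|)^{-d-\sigma}$ and its gradient analogue, and then Minkowski plus (weak) Young in the time variable---is a correct and entirely standard way to obtain these embeddings, and the exponent bookkeeping you give in each case checks out: in (i) one indeed lands exactly on $\Phi(r)\approx e^{-r}r^{-d/(d+\sigma)}\in L_{(d+\sigma)/d,\infty}(\bR)\setminus L_{(d+\sigma)/d}(\bR)$, so the weak-type Young/HLS inequality is the right tool and the range $l\in[p,q]$ follows by interpolation with the trivial $L_p$ bound coming from $\Gamma\in L_1(\bR^{d+1})$; in (ii) dropping below the critical exponent puts $\Phi$ in a strong $L_r$; in (iii) the condition $p>d/\sigma+1$ is precisely $\Gamma\in L_{p'}((0,\infty)\times\bR^d)$, and the dyadic split at $r\sim|h|^\sigma$ (resp.\ $r\sim|t-t'|$) gives matching powers $|h|^\tau$ (resp.\ $|t-t'|^{\tau/\sigma}$) on both sides.

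Two small points are worth tightening. First, the cutoff reduction ``$v=\eta u\in\bH_{p,0}^\sigma(-1,T+1)$'' is slightly awkward since $u$ is not defined for $t>T$; it is cleaner (and avoids any extension operator) to use the fact that $u(0,\cdot)=0$ directly and write, for $t\in[0,T]$,
\[
u(t,x)=\int_0^t\!\!\int_{\bR^d}\Gamma(t-s,x-y)\,g(s,y)\,dy\,ds,\qquad g:=\partial_t u+u+(-\Delta)^{\sigma/2}u\in L_p(\bR^d_T),
\]
which only involves $g$ on $(0,T)$ and with $\|g\|_{L_p}\lesssim\|u\|_{\bH_p^\sigma(T)}$. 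Second, in part (iii) your upper dyadic piece produces the power $|h|^{\tau}$ only when $(d+p)p'/(\sigma p)>1$; for $\sigma>1$ and $p$ large enough this fails and the estimate saturates at $|h|^{1}$ rather than $|h|^{\tau}$. That regime corresponds exactly to $\tau>1$, where the seminorm $[\,\cdot\,]_{C^{\tau/\sigma,\tau}}$ as defined in the paper is degenerate (finiteness forces $\nabla_x u\equiv 0$), so this is really an infelicity in the statement of the lemma rather than a gap in your argument; in the paper's applications the parameter is always taken in $(0,1)$ (e.g.\ $\vartheta=\sigma/2$). With those two caveats noted, the proof is sound, and it would be reasonable to cite, as you suggest at the end, the equivalent route through anisotropic Triebel--Lizorkin embeddings.
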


\begin{corollary}
    Let $t_0>0$, $R>0$, $\sigma\in(0,2)$, $T\in(0,\infty], p\in (1,\infty)$, $\vartheta:=\sigma-(d+\sigma)/p$, and $u\in \bH_{p,0}^\sigma(t_0-R^\sigma,t_0)$. Take $\zeta_0\in C_c^\infty(B_R)$ such that $\zeta_0=1$ in $B_{R/2}$.
  
  (i) Let
    \begin{equation*}
        \begin{cases}
            l\in [p,q] &\text{ if } p<d/\sigma+1
            \\
            l\in[p,\infty) &\text{ if } p=d/\sigma+1
            \\
            l\in[p,\infty] &\text{ if } p>d/\sigma+1,
        \end{cases}
    \end{equation*}
    where $q$ is defined by \eqref{eq11072207}. Then we have
    \begin{align} \label{eq11072241}
    &(|u|^{l})^{1/l}_{Q_{R/2}(t_0,0)} \nonumber
    \\
    &\leq NR^{\vartheta} \left(\|\partial_t(\zeta_0 u)\|_{L_p((t_0-R^\sigma,t_0)\times \bR^d)} + \|(-\Delta)^{\sigma/2}(\zeta_0 u)\|_{L_p((t_0-R^\sigma,t_0)\times \bR^d)} \right),
\end{align}
where $N=N(d,p,\sigma)$ is independent of $T$.

(ii) If $p>d/\sigma+1$, then
\begin{align} \label{eq11290052}
    &[u]_{C^{\vartheta/\sigma,\vartheta}(Q_{R/2}(t_0,0))} \nonumber
    \\
    &\leq N\left(\|\partial_t(\zeta_0 u)\|_{L_p((t_0-R^\sigma,t_0)\times \bR^d)} + \|(-\Delta)^{\sigma/2}(\zeta_0 u)\|_{L_p((t_0-R^\sigma,t_0)\times \bR^d)} \right),
\end{align}
where $N=N(d,p,\sigma)$.
\end{corollary}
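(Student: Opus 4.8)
The plan is to deduce both estimates from the Sobolev embedding of Lemma \ref{lem11072224} by a parabolic rescaling that normalizes $R$ to $1$. Since $u\in\bH_{p,0}^\sigma(t_0-R^\sigma,t_0)$ and multiplication by the fixed cutoff $\zeta_0\in C_c^\infty(B_R)$ preserves this class for $\sigma\in(0,2)$ (as already used in the proof of Lemma \ref{lem10142050} to control $L_t(u\zeta_k)$), we have $\zeta_0 u\in\bH_{p,0}^\sigma(t_0-R^\sigma,t_0)$. I would then introduce the rescaled function $\tilde w(s,y):=(\zeta_0 u)(t_0-R^\sigma+R^\sigma s,\,Ry)$ for $(s,y)\in(0,1)\times\bR^d$, which lies in $\bH_{p,0}^\sigma(0,1)$, and record the change-of-variables identities
\begin{gather*}
\|\partial_t\tilde w\|_{L_p((0,1)\times\bR^d)}=R^{\vartheta}\|\partial_t(\zeta_0 u)\|_{L_p((t_0-R^\sigma,t_0)\times\bR^d)},\\
\|(-\Delta)^{\sigma/2}\tilde w\|_{L_p((0,1)\times\bR^d)}=R^{\vartheta}\|(-\Delta)^{\sigma/2}(\zeta_0 u)\|_{L_p((t_0-R^\sigma,t_0)\times\bR^d)},
\end{gather*}
the exponent $\vartheta=\sigma-(d+\sigma)/p$ being exactly the parabolic homogeneity of these quantities.

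Because $\tilde w$ has zero trace at $s=0$ and the time interval has length $1$, a Minkowski-inequality argument as in \eqref{eq110722228} gives $\|\tilde w\|_{L_p((0,1)\times\bR^d)}\le\|\partial_t\tilde w\|_{L_p((0,1)\times\bR^d)}$; combining this with the norm equivalence $\|\cdot\|_{H_p^\sigma}\approx\|\cdot\|_{L_p}+\|(-\Delta)^{\sigma/2}\cdot\|_{L_p}$ yields
\begin{equation*}
\|\tilde w\|_{\bH_p^\sigma(0,1)}\le N(d,p,\sigma)\big(\|\partial_t\tilde w\|_{L_p((0,1)\times\bR^d)}+\|(-\Delta)^{\sigma/2}\tilde w\|_{L_p((0,1)\times\bR^d)}\big).
\end{equation*}
Applying Lemma \ref{lem11072224} to $\tilde w$ with $T=1$ — for which the constant depends only on $d,\sigma,p$ (and, in parts (i)--(ii), on $l$, which itself ranges in a set determined by $d,\sigma,p$) — then controls the entire right-hand side of both estimates, up to the powers of $R$ introduced by the rescaling.

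For part (i) I would use that $\zeta_0\equiv1$ on $B_{R/2}$, so $u=\zeta_0 u$ on $Q_{R/2}(t_0,0)$, and that in the rescaled variables $Q_{R/2}(t_0,0)$ becomes the fixed subcylinder $(1-2^{-\sigma},1)\times B_{1/2}$ of $(0,1)\times\bR^d$; hence $(|u|^{l})^{1/l}_{Q_{R/2}(t_0,0)}\le N\|\tilde w\|_{L_l((0,1)\times\bR^d)}\le N\|\tilde w\|_{\bH_p^\sigma(0,1)}$ by Lemma \ref{lem11072224}(i)--(ii), and inserting the displays above gives \eqref{eq11072241}. For part (ii), again $u=\zeta_0 u$ on $Q_{R/2}(t_0,0)$, and under the dilation the parabolic Hölder seminorm scales as $[u]_{C^{\vartheta/\sigma,\vartheta}(Q_{R/2}(t_0,0))}=R^{-\vartheta}[\tilde w]_{C^{\vartheta/\sigma,\vartheta}(Q')}$ on the corresponding subcylinder $Q'\subset(0,1)\times\bR^d$; bounding this by $R^{-\vartheta}\|\tilde w\|_{C^{\vartheta/\sigma,\vartheta}((0,1)\times\bR^d)}$, applying Lemma \ref{lem11072224}(iii) with $\tau=\vartheta$, and inserting the bound for $\|\tilde w\|_{\bH_p^\sigma(0,1)}$, the factors $R^{-\vartheta}$ and $R^{\vartheta}$ cancel and \eqref{eq11290052} follows.

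I expect no genuine obstacle here: this is essentially the scaled version of the embedding already recorded as Lemma \ref{lem11072224}. The only points to watch are the bookkeeping of the dilation exponents (which are forced to equal $\vartheta$, so that the constants end up independent of $R$ and of $T$) and the replacement of the full $\bH_p^\sigma$ norm by the two seminorms on the right-hand side — which is precisely where the zero initial condition on $\zeta_0 u$ and the normalized unit time interval enter — together with the standard fact that multiplication by $\zeta_0$ is bounded on $H_p^\sigma$ for $\sigma\in(0,2)$.
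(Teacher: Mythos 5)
Your argument is correct and follows essentially the same route as the paper: rescale by $(t,x)\mapsto(R^\sigma t,Rx)$ to reduce to $R=1$, observe that $u=\zeta_0 u$ on $Q_{R/2}(t_0,0)$, apply Lemma \ref{lem11072224} to $\zeta_0 u$, and eliminate the lower-order $\|\zeta_0 u\|_{L_p}$ term appearing in $\|\cdot\|_{\bH_p^\sigma}$ by the Minkowski-type bound of \eqref{eq110722228}, which is exactly where the zero initial value of $\zeta_0 u$ and the unit length of the rescaled time interval enter. The scaling identities you record (with exponent $\vartheta=\sigma-(d+\sigma)/p$) and the cancellation of the $R^{\pm\vartheta}$ factors in part (ii) match the paper's computation.

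The only small imprecision is in part (i) for $p>d/\sigma+1$: you invoke Lemma \ref{lem11072224}(i)--(ii), but those cases are stated only for $p\le d/\sigma+1$. When $p>d/\sigma+1$ and $l\in[p,\infty]$, you should instead use part (iii) of the lemma, controlling $(|u|^l)_{Q_{R/2}}^{1/l}\le\|\tilde w\|_{L_\infty}$ via the $C^{\vartheta/\sigma,\vartheta}$ bound; the paper handles this by noting $(\zeta_0 u)(t_0-R^\sigma,\cdot)=0$ and bounding $|\zeta_0 u|$ directly by the Hölder seminorm. This is a cosmetic omission rather than a gap, since the fix is the same embedding you already cite for part (ii).
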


\begin{proof}
    By the change of variables $(t,x)\to (R^\sigma t,Rx)$, it suffices to deal with the case when $R=1$. 
    
    $(i)$ 
    First, when $p\leq d/\sigma+1$, by Lemma \ref{lem11072224} $(i)$ or $(ii)$, 
    \begin{align*}
    (|u|^{l})^{1/l}_{Q_{1/2}(t_0,0)} &\leq \|\zeta_0 u\|_{L_l((t_0-1,t_0)\times \bR^d)} \nonumber
    \\
    &\leq N \|\zeta_0 u\|_{L_p((t_0-1,t_0)\times \bR^d)} + N\|(-\Delta)^{\sigma/2}(\zeta_0 u)\|_{L_p((t_0-1,t_0)\times \bR^d)} \nonumber
    \\
    &\quad + N\|\partial_t(\zeta_0 u)\|_{L_p((t_0-1,t_0)\times \bR^d)} .
\end{align*}
It remains to apply \eqref{eq110722228} together with a shift of the coordinates in the time direction.

Next, we consider the case when $p>d/\sigma+1$. Since $(\zeta_0u)(t_0-1,x)=0$, for $(t,x)\in Q_{1/2}(t_0,0)$,
\begin{align*}
      |(\zeta_0u)(t,x)| &= |t- (t_0-1)|^{\tau/\sigma}\frac{|(\zeta_0u)(t,x) - (\zeta_0u)(t_0-1,x)|}{|t- (t_0-1)|^{\tau/\sigma}} 
      \\
      &\leq N[\zeta_0u]_{C^{\tau/\sigma,\sigma}((t_0-1,t_0)\times\bR^d)},
  \end{align*}
  which yields that
  \begin{align*}
      (|u|^{l})^{1/l}_{Q_{1/2}(t_0,0)} &\leq \|\zeta_0 u\|_{L_\infty(Q_{1/2}(t_0,0))} \leq N\|\zeta_0u\|_{C^{\tau/\sigma,\sigma}((t_0-1,t_0)\times\bR^d)}.
  \end{align*}
  Then it remains to repeat the above argument with Lemma \ref{lem11072224} $(iii)$.

  $(ii)$
  For \eqref{eq11290052}, one just needs to use Lemma \ref{lem11072224} $(iii)$ together with
 \begin{align*}
    [u]_{C^{\vartheta/\sigma,\sigma}(Q_{1/2}(t_0,0)} \leq [\zeta_0 u]_{C^{\vartheta/\sigma,\sigma}((t_0-1,t_0)\times \bR^d)}.
\end{align*}
The corollary is proved.
\end{proof}

In the following lemma, $\cC_R(t,x)$ and $\widehat{\cC_R}(t,x)$ are introduced in \eqref{eq10152356}.

\begin{lemma} \label{lem10161553}
    Let $\gamma\in(0,1)$, and $E\subset F\subset \bR^d_T$ with $|E|<\infty$. Suppose that if 
    \begin{equation*}
        |\cC_R(t,x) \cap E| \geq \gamma |\cC_R(t,x)|,
    \end{equation*}
    for $R>0$ and $(t,x)\in \bR^d_T$, then
    \begin{equation*}
        \widehat{\cC_R}(t,x) \subset F.
    \end{equation*}
    Then we have
    \begin{equation*}
        |E|\leq N(d)\gamma|F|.
    \end{equation*}
\end{lemma}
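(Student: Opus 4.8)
The plan is to run a Lebesgue‑density/covering argument of Krylov–Safonov type. I would first discard a null subset of $E$ and work with the set $E_0\subset E$ of Lebesgue density points of $E$; since $|E|<\infty$ we have $|E\setminus E_0|=0$, so it suffices to bound $|E_0|$, and every point of $E_0$ lies in $\bR^d_T$. For $(t,x)\in E_0$ consider the density ratio $g_{(t,x)}(R):=|\cC_R(t,x)\cap E|/|\cC_R(t,x)|$ for $R>0$. Because the cylinders $\cC_R(t,x)$ are monotone in $R$ with boundaries of measure zero, $g_{(t,x)}$ is continuous on $(0,\infty)$; moreover $g_{(t,x)}(R)\to 1$ as $R\to 0^+$ (density point), while $g_{(t,x)}(R)\le |E|/(2\omega_d R^{d+\sigma})\to 0$ as $R\to\infty$. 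By the intermediate value theorem there is $R_{(t,x)}\in(0,\infty)$ with $g_{(t,x)}(R_{(t,x)})=\gamma$, hence $|\cC_{R_{(t,x)}}(t,x)\cap E|=\gamma\,|\cC_{R_{(t,x)}}(t,x)|\ge\gamma\,|\cC_{R_{(t,x)}}(t,x)|$, so by hypothesis $\widehat{\cC_{R_{(t,x)}}}(t,x)\subset F$. Two side facts will be recorded here: from $g_{(t,x)}(R_{(t,x)})=\gamma$ one gets $R_{(t,x)}^{d+\sigma}\le |E|/(2\omega_d\gamma)$, so the radii are uniformly bounded; and since $t<T$, the half‑cylinder $(t-R_{(t,x)}^\sigma,t)\times B_{R_{(t,x)}}(x)$ lies inside $\widehat{\cC_{R_{(t,x)}}}(t,x)$, whence $|\widehat{\cC_{R_{(t,x)}}}(t,x)|\ge\tfrac12|\cC_{R_{(t,x)}}(t,x)|$.

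Next I would apply the Besicovitch covering theorem to the family $\{\cC_{R_{(t,x)}}(t,x):(t,x)\in E_0\}$: these are balls for the parabolic quasi‑distance $\max\{|t-s|^{1/\sigma},|x-y|\}$, each centered at a point of $E_0$, with uniformly bounded radii, so there is a countable subfamily $\{\cC_i:=\cC_{R_i}(t_i,x_i)\}$ with $E_0\subset\bigcup_i\cC_i$ and $\sum_i\mathbf 1_{\cC_i}\le N(d)$ pointwise. Then
$$|E_0|\le\sum_i|E\cap\cC_i|=\gamma\sum_i|\cC_i|\le 2\gamma\sum_i|\widehat{\cC_i}|=2\gamma\int_{\bR^{d+1}}\sum_i\mathbf 1_{\widehat{\cC_i}}\le 2N(d)\,\gamma\,|F|,$$
where the last step uses $\widehat{\cC_i}\subset\cC_i$ (so $\sum_i\mathbf 1_{\widehat{\cC_i}}\le N(d)$) together with $\widehat{\cC_i}\subset F$. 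Since $|E|=|E_0|$, this is the desired bound with the constant renamed.

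\textbf{Main obstacle.} The delicate point is the covering step, and in particular the insistence on covering $E_0$ \emph{itself} by the cylinders $\cC_{R_{(t,x)}}(t,x)$ with bounded overlap rather than extracting a pairwise disjoint Vitali subfamily: a disjoint subfamily (even after the usual enlargement) would only give $|E|\le N(d)|F|$ and would destroy the crucial factor $\gamma$, which is produced precisely by summing the exact identities $|E\cap\cC_i|=\gamma|\cC_i|$ over a finite‑overlap cover. One must therefore make sure the Besicovitch covering property is indeed available for the parabolic cylinders $\cC_R(t,x)$ with an overlap constant depending only on $d$ (uniformly in the fixed $\sigma\in(0,2)$); this is classical, and the uniform bound on the radii $R_{(t,x)}$ obtained above guarantees its hypotheses are satisfied. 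The remaining ingredients — continuity and limiting behavior of $g_{(t,x)}$, and the elementary measure comparison $|\widehat{\cC_i}|\ge\tfrac12|\cC_i|$ — are routine.
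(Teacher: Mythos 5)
The paper gives no argument for this lemma; it simply writes ``See \cite[Lemma A.20]{DK19}.'' There is therefore no in-paper proof to compare yours against, and your blind reconstruction is a genuine, self-contained alternative.

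Your argument is correct in structure and is the standard route to such a ``crawling of ink spots'' lemma: pass to the set $E_0$ of parabolic Lebesgue density points of $E$, for each $(t,x)\in E_0$ use the intermediate value theorem to find an exact-density radius $R_{(t,x)}$ so that $|\cC_{R_{(t,x)}}(t,x)\cap E|=\gamma|\cC_{R_{(t,x)}}(t,x)|$ (hence $\widehat{\cC_{R_{(t,x)}}}(t,x)\subset F$ by hypothesis), note that $|\widehat{\cC_R}(t,x)|\ge\tfrac12|\cC_R(t,x)|$ because the lower half of the cylinder is always below $t<T$, and then extract a finite-overlap (Besicovitch) subcover of $E_0$ so that summing the exact identities over the subcover preserves the factor $\gamma$. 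The chain $|E_0|\le\gamma\sum_i|\cC_i|\le 2\gamma\sum_i|\widehat{\cC_i}|\le 2N(d)\gamma|F|$ is fine, and the preliminary facts (continuity of $g_{(t,x)}$, the two limits, the uniform bound on the radii) are all correct.

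One detail is worth flagging. You invoke the Besicovitch covering theorem for the parabolic quasi-balls $\cC_R$ and assert the overlap constant depends only on $d$, ``uniformly in the fixed $\sigma\in(0,2)$.'' The covering property does hold, but via the usual quasi-metric proofs the overlap constant naturally comes out as $N(d,\sigma)$: the quasi-triangle constant of $\max\{|t-s|^{1/\sigma},|x-y|\}$ is $2^{(1/\sigma-1)_+}$, which blows up as $\sigma\to 0^+$, and a Vitali-enlargement argument (even with your objection addressed by taking $R_{(t,x)}$ to be the \emph{largest} radius with $g_{(t,x)}=\gamma$) runs into the same $\sigma$-dependence when relating the density on the enlarged region back to the cylinder hypothesis. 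Since $\sigma$ is fixed throughout the paper this is harmless for the application, but if one insists on the statement's $N(d)$ one should either check the constant in the cited \cite[Lemma A.20]{DK19}, or restrict $\sigma$ to a compact subinterval of $(0,2)$.
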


\begin{proof}
    See \cite[Lemma A.20]{DK19}.
\end{proof}

% \section*{Acknowledgement} \pdfbookmark{Acknowledgement}{Acknowledgement}

% \vspace{3em}
% \textbf{Declaration of interest}

% Declarations of interest: none

% \bibliography{refs}
% \bibliographystyle{abbrv}

\end{document}